\newcommand{\conv}{\operatorname{conv}}
\newcommand{\cone}{\operatorname{cone}}
\newcommand{\disc}{\operatorname{disc}}
\newcommand{\dd}{\mathrm{d}}
\newcommand{\id}{\operatorname{Id}}
\newcommand{\Z}{\mathds Z}
\newcommand{\R}{\mathds R}
\newcommand{\C}{\mathds C}
\newcommand{\SO}{\mathsf{SO}}
\renewcommand{\O}{\mathsf O}
\newcommand{\GL}{\mathsf{GL}}
\newcommand{\Sym}{\operatorname{Sym}}
\newcommand{\g}{\mathrm g}
\newcommand{\tr}{\operatorname{tr}}
\newcommand{\Gr}{\operatorname{Gr}_2}
\newcommand{\diag}{\operatorname{diag}}
\newtheorem{theorem}{Theorem}[]
\newtheorem{lemma}[theorem]{Lemma}
\newtheorem*{claim}{Claim}
\newtheorem{proposition}[theorem]{Proposition}
\newtheorem{corollary}[theorem]{Corollary}
\newtheorem{mainthm}{\sc Theorem}
\newtheorem*{mainthmp}{\sc Theorem A'}
\theoremstyle{definition}
\newtheorem{definition}[theorem]{Definition}
\theoremstyle{remark}
\newtheorem{remark}[theorem]{Remark}
\newtheorem{remarks}[theorem]{Remarks}
\newtheorem{example}[theorem]{Example}
\title[Convex Algebraic Geometry of Curvature Operators]{Convex Algebraic Geometry \\ of Curvature Operators}
\author[R. G. Bettiol]{Renato G. Bettiol}
\address{City University of New York (Lehman College) \newline
\indent Department of Mathematics  \newline
\indent 250 Bedford Park Blvd W\newline
\indent Bronx, NY, 10468, USA }
\email{r.bettiol@lehman.cuny.edu}
\author[M. Kummer]{Mario Kummer}
\address{Technische Universit\"at Dresden \newline
\indent Institut f\"ur Geometrie \newline
\indent  Willersbau, WIL B 109 \newline
\indent  Zellescher Weg 12-14 \newline
\indent 01069 Dresden, Germany}
\email{mario.kummer@tu-dresden.de}
\author[R. A. E. Mendes]{Ricardo A. E. Mendes}
\address{University of Oklahoma\newline
\indent Department of Mathematics\newline
\indent 601 Elm Ave\newline
\indent Norman, OK, 73019-3103, USA}
\email{ricardo.mendes@ou.edu}
\numberwithin{equation}{section}
\numberwithin{theorem}{section}
\subjclass{14P10, 53B20, 53C21, 90C22}
\date{\today}
\begin{document}
\begin{abstract}
We study the structure of the set of algebraic curvature operators satisfying a sectional curvature bound under the light of the emerging field of Convex Algebraic Geometry. More precisely, we determine in which dimensions $n$ this convex semialgebraic set is a spectrahedron or a spectrahedral shadow; in particular, for $n\geq5$, these give new counter-examples to the Helton--Nie Conjecture. Moreover, efficient algorithms are provided if $n=4$ to test membership in such a set. For $n\geq5$, algorithms using semidefinite programming are obtained from hierarchies of inner approximations by spectrahedral shadows and outer relaxations by spectrahedra.
\end{abstract}

\maketitle

\section{Introduction}

The emerging field of Convex Algebraic Geometry originates from a natural coalescence of ideas in Convex Geometry, Optimization, and Algebraic Geometry, and has witnessed great progress over the last few years, see \cite{SIAMbook} for surveys. The main objects considered are convex semialgebraic subsets of vector spaces, such as \emph{spectrahedra} and their \emph{shadows}; and their study has led to remarkable achievements in optimization problems for polynomials in several variables.
In particular, \emph{semidefinite programming} on spectrahedral shadows is a far-reaching generalization of linear programming on convex polyhedra, and an area of growing interest due to its numerous and powerful applications, see e.g. \cite{handbook}.

The \emph{raison d'\^etre} of this paper is to shed new light on curvature operators of Riemannian manifolds with sectional curvature bounds from the viewpoint of Convex Algebraic Geometry. More importantly, we hope that the connections established here will serve as foundations for developing further ties between the exciting new frontiers conquered by Convex Algebraic Geometry and classical objects and open problems from Geometric Analysis and Riemannian Geometry.

Recall that a \emph{semialgebraic set} is a subset $S\subset \R^n$ defined by boolean combinations of finitely many polynomial equalities and inequalities; for example, the set $S\subset\R^4$ consisting of $(a,b,c,x)\in\R^4$ such that $ax^2+bx+c=0$ and $a\neq0$ is a semialgebraic set. 
The celebrated Tarski--Seidenberg Theorem states that linear projections of semialgebraic sets are also semialgebraic. As an illustration, consider the image $\pi(S)\subset\R^3$ of $S\subset\R^4$ under the projection $\pi(a,b,c,x)=(a,b,c)$. It consists precisely of $(a,b,c)\in\R^3$ with $a\neq0$ for which
\begin{equation}\label{eq:quantified}
\exists\, x\in\R \quad \text{such that} \quad ax^2+bx+c=0,
\end{equation}
and it can also be described by finitely many polynomial equalities and inequalities (without quantifiers), namely:
\begin{equation}\label{eq:unquantified}
b^2-4ac\geq0.
\end{equation}
The algorithmic process of rewriting a quantified polynomial sentence, such as \eqref{eq:quantified}, as an equivalent polynomial sentence without quantifiers, such as \eqref{eq:unquantified}, is known as \emph{Quantifier Elimination}. This method generalizes the Tarski--Seidenberg Theorem as formulated above, and has deep consequences in Logic, Model Theory, and Theoretical Computer Science.

An elementary application of Quantifier Elimination to Riemannian Geometry is to eliminate the quantifier $\forall$ from the sentence that defines a sectional curvature bound. 
For example, the condition $\sec\geq k$ for 
an algebraic curvature operator $R\colon\wedge^2\R^n\to\wedge^2\R^n$, 
is given by the (quantified) sentence
\begin{equation*}
\forall \,\sigma\in\Gr^+(\R^n),\quad  \sec_R(\sigma):=\langle R(\sigma),\sigma\rangle \geq k,
\end{equation*}
where $\Gr^+(\R^n)=\{X\wedge Y\in\wedge^2\R^n:\|X\wedge Y\|=1\}$ is the (oriented) Grassmannian of $2$-planes in~$\R^n$, which is a real algebraic variety hence also a semialgebraic set.
This had been observed, among others, by Weinstein~\cite[p.~260]{Weinstein72}:
\begin{quote}\em
there exist finitely many polynomial inequalities in the $R_{ijkl}$'s such that, given any curvature tensor, one could determine whether it is positive sectional by evaluating the polynomials and checking whether the results satisfy the inequalities.
\end{quote}
In other words, the sets
\begin{equation*}
\mathfrak R_{\sec\geq k}(n):=\big\{R\in\Sym^2_b(\wedge^2\R^n) : \sec_R\geq k\big\}
\end{equation*}
are semialgebraic. Here, the subscript $_b$ indicates that $R\in\Sym^2(\wedge^2\R^n)$ satisfies the first Bianchi identity, see Section~\ref{sec:cagprel} for preliminaries on Riemannian Geometry and curvature operators. The first Bianchi identity is stated in \eqref{eq:bianchi}. We stress that $\mathfrak R_{\sec\geq 0}(n)$ can be thought of as the subset of all forms of degree two in the homogeneous coordinate ring of the Grassmannian $\Gr(n)$ that are nonnegative on the real part of $\Gr(n)$. This point of view might be helpful for readers with a background in Convex Algebraic Geometry.
Weinstein~\cite[p.~260]{Weinstein72} continues:
\begin{quote}\em
It would be useful to know these inequalities explicitly.~[...]
Unfortunately, the [Quantifier Elimination] procedure is too long to be used in practice even with the aid of a computer.
\end{quote}
Despite all technological advances, this remains true today, almost 50 years later. Although such an explicit description of $\mathfrak R_{\sec\geq k}(n)$ is still elusive, in this paper we provide new information about these semialgebraic sets. Besides being of intrinsic interest, we expect this will lead to new global results in differential geometry.

A fundamental example of convex semialgebraic set is the cone $\{A\in\Sym^2(\R^d):A\succeq0\}$ of positive-semidefinite matrices.
Preimages of this cone under affine maps $\R^n\to \Sym^2(\R^d)$ 
are also convex semialgebraic, and 
are called \emph{spectrahedra}.
They generalize \emph{polyhedra}, which correspond to affine maps with image in the subspace of diagonal matrices. In contrast to polyhedra, the linear projection of a spectrahedron may fail to be a spectrahedron. Nevertheless, these projections are convex semialgebraic sets, and are called \emph{spectrahedral shadows}.
Following a question of Nemirovski~\cite{nemirovski} in his 2006 ICM plenary address, Helton and Nie~\cite[p.~790]{helton-nie} conjectured that \emph{every} convex semialgebraic set is a spectrahedral shadow. Remarkably, this turned out not to be the case, as very recently discovered by Scheiderer~\cite{claus}. 
Further counter-examples were subsequently found in~\cite{hamza}. We describe a convenient and digestible criterion, which follows from Scheiderer's work~\cite{claus}, for the cone of nonnegative polynomials inside a given vector space of polynomials not to be a spectrahedral shadow that can be of independent interest.

Our first main result describes how sets of algebraic curvature operators with sectional curvature bounds fit in the above taxonomy of convex semialgebraic sets, providing yet another class of counter-examples to the Helton--Nie Conjecture:

\begin{mainthm}\label{mainthm:spectrahedra}
For all $k\in\R$, each of the sets $\mathfrak R_{\sec\geq k}(n)$ and $\mathfrak R_{\sec\leq k}(n)$ is:
\begin{enumerate}[\indent \rm (1)]
\item\label{A3} not a spectrahedral shadow, if $n\geq5$;
\item\label{A2} a spectrahedral shadow, but not a spectrahedron, if $n=4$;
\item\label{A1} a spectrahedron, if $n\leq3$.
\end{enumerate}
\end{mainthm}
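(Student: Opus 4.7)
I would first reduce all four cases to the single cone $\mathfrak{R}_{\sec\geq 0}(n)$. The identity operator $\id_{\wedge^2\R^n}$ lies in $\mathfrak{R}_{\sec\geq 0}(n)$ and is the curvature operator of constant sectional curvature $1$, so $\mathfrak{R}_{\sec\geq k}(n)=\mathfrak{R}_{\sec\geq 0}(n)+k\cdot\id$ is an affine translate. Moreover $\mathfrak{R}_{\sec\leq k}(n)=-\mathfrak{R}_{\sec\geq -k}(n)$. Since affine isomorphisms preserve the classes of spectrahedra and of spectrahedral shadows (and, in particular, the obstructions that rule out either), all four assertions follow from the corresponding statement for $\mathfrak{R}_{\sec\geq 0}(n)$.

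\textbf{Part (3).} For $n\leq 3$ one has $\wedge^4\R^n=0$, so every $2$-form is decomposable and $\Gr^+(\R^n)$ coincides with the unit sphere in $\wedge^2\R^n$. Hence $\sec_R\geq 0$ if and only if $R\succeq 0$ as a symmetric operator on $\wedge^2\R^n$, and $\mathfrak{R}_{\sec\geq 0}(n)$ is the intersection of the PSD cone with the Bianchi subspace $\Sym^2_b(\wedge^2\R^n)$, a spectrahedron by definition.

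\textbf{Part (2).} For $n=4$ I would invoke Thorpe's trick. The Hodge star $\ast\colon\wedge^2\R^4\to\wedge^2\R^4$ is an involution whose associated bilinear form detects decomposability: $\omega\wedge\omega=\langle \ast\omega,\omega\rangle\,\mathrm{vol}$, so $\langle \ast\omega,\omega\rangle=0$ exactly on $\cone(\Gr^+(\R^4))$. Consequently $\sec_R=\sec_{R+t\ast}$ for every $t\in\R$, and a standard eigenvalue argument shows that $\sec_R\geq 0$ if and only if there exists $t\in\R$ with $R+t\ast\succeq 0$. This exhibits $\mathfrak{R}_{\sec\geq 0}(4)$ as the image of the spectrahedron $\{(R,t):R+t\ast\succeq 0\}$ under the linear projection that forgets~$t$, hence as a spectrahedral shadow. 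To rule out the spectrahedron property, I would exhibit an explicit non-exposed face of $\mathfrak{R}_{\sec\geq 0}(4)$ (or, equivalently, a supporting hyperplane whose intersection with the cone is strictly larger than the face it exposes); since every face of a spectrahedron is exposed, this suffices.

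\textbf{Part (1).} For $n\geq 5$ I would appeal to the criterion for non-spectrahedrality of cones of nonnegative forms announced in the introduction (following Scheiderer). As the excerpt indicates, $\mathfrak{R}_{\sec\geq 0}(n)$ can be identified with the cone of quadratic forms in the homogeneous coordinate ring of the Grassmannian $\Gr(2,n)\subset\mathbb{P}(\wedge^2\R^n)$ that are nonnegative on its real locus. The real Grassmannian has dimension $2(n-2)\geq 6$, which should be enough to place the pair $(\Gr(2,n),\mathcal{O}(2))$ in the range where Scheiderer's obstruction applies, thereby ruling out the spectrahedral shadow property. Propagating this by the reductions above gives the conclusion for both $\mathfrak{R}_{\sec\geq k}(n)$ and $\mathfrak{R}_{\sec\leq k}(n)$.

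\textbf{Main obstacle.} The delicate step is part (1): verifying the hypotheses of Scheiderer's criterion for the specific projective variety $\Gr(2,n)$ with its degree-two line bundle, in particular ensuring that the arithmetic/geometric conditions on the variety (dimension, nondegeneracy of the embedding, behavior of the cohomology) are met for every $n\geq 5$. The non-spectrahedron claim in part (2) is also genuinely nontrivial, requiring the explicit construction and certification of a non-exposed face; the remaining assertions, once the reductions are in place, are essentially structural.
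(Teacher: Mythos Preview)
Your reductions and Part~(3) match the paper. For Part~(2), the spectrahedral-shadow half is also the paper's argument (Thorpe/Finsler). The differences and gaps are in the ``not a spectrahedron'' half of~(2) and in~(1).

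\medskip

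\textbf{Part~(2), non-spectrahedron.} Your proposed route via a non-exposed face is different from the paper's and is not carried out. The paper instead proves (Theorem~C) that $\mathfrak R_{\sec\geq 0}(4)$ is an \emph{algebraic interior} with minimal defining polynomial $\mathfrak p(R)=\disc_x(\det(R+x*))$, and then uses a general lemma: any spectrahedron with nonempty interior has a minimal defining polynomial that is nonvanishing on the interior. Since $\mathfrak p(\id)=0$ while $\id$ is interior, the set cannot be a spectrahedron. Your face-exposure idea is in principle valid (all faces of spectrahedra are exposed), but you would need to actually produce and certify a non-exposed face of $\mathfrak R_{\sec\geq 0}(4)$; nothing in your sketch suggests a candidate, and I do not see an easy one. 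The paper's approach buys a concrete, checkable obstruction at the cost of proving irreducibility of $\mathfrak p$.

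\medskip

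\textbf{Part~(1).} Here there is a genuine gap: you misidentify what Scheiderer's criterion requires. The input is \emph{not} a dimension or cohomology count on $(\Gr(2,n),\mathcal O(2))$. The essential ingredient is a quadratic form $P\in\R[\Gr(n)]_2$ that is nonnegative on the real Grassmannian but is \emph{not} a sum of squares in $\R[\Gr(n)]$. Such $P$ exist precisely for $n\geq 5$ because $\Gr(n)$ fails to have minimal degree in that range (Blekherman--Smith--Velasco); an explicit example is Zoltek's operator. The paper then passes to an affine chart of $\Gr(5)$, checks that the dehomogenized $P$ is still nonnegative and still not a sum of squares there, and verifies the translation-closure hypothesis of Scheiderer's criterion (their Theorem~2.9); an induction on $n$ via linear slices handles $n\geq 6$. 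Your sentence ``dimension $2(n-2)\geq 6$\ldots should be enough'' does not reflect this mechanism, and the ``arithmetic/geometric conditions\ldots behavior of the cohomology'' you list are not what is at stake. Without the nonnegative-not-SOS input and the affine-chart verification, the argument does not go through.
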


We state Theorem~\ref{mainthm:spectrahedra} in the above manner for the sake of completeness, despite the fact that some claims were previously known.
More precisely, statement~\eqref{A1} follows trivially from the equivalence, in dimensions $n\leq3$, between $\sec_R\geq k$ and $R-k\id\succeq0$; analogously for $\sec\leq k$ (which we omit henceforth, see Remark~\ref{rem:obvious}). Furthermore, the first part of statement \eqref{A2} follows from the so-called \emph{Thorpe's trick}~\cite{Thorpe72}, see Proposition~\ref{prop:thorpe}; namely, the equivalence, in dimension $n=4$, between $\sec_R\geq k$ and the existence of $x\in\R$ such that $R-k\id+x\,*\succeq0$, where $*$ is the Hodge star operator. In other words, $\mathfrak R_{\sec\geq k}(4)$ is the image of the spectrahedron $\{R\in\Sym^2(\wedge^2\R^4):R-k\id\succeq0\}$ under the orthogonal projection $\Sym^2(\wedge^2\R^4)\to\Sym^2_b(\wedge^2\R^4)$, whose kernel is spanned by~$*$.
We point out that Thorpe's trick is actually an instance of a much older result due to Finsler~\cite{finsler}, see Lemma~\ref{lemma:finsler}; a fact that seems to have gone unnoticed thus far. 

For readers interested in \emph{strict} sectional curvature bounds $\sec>k$ and $\sec<k$, we note that statements \eqref{A3} and \eqref{A2} in Theorem~\ref{mainthm:spectrahedra} 
carry over to this setting, see Remarks~\ref{rem:obvious} and \ref{rem:specrem}.
However, in keeping with the Convex Algebraic Geometry literature, all spectrahedra are (by definition) closed sets. Thus, $\mathfrak R_{\sec>k}(n)$ and $\mathfrak R_{\sec<k}(n)$, $n\leq 3$, are, strictly speaking, not spectrahedra.
Of course, this is just a matter of convention, and $\sec>k$ is clearly equivalent to $R-k\id\succ0$ if $n\leq3$. 

Although $\mathfrak R_{\sec\geq k}(n)$, $n\geq5$, fails to be a spectrahedral shadow, our second main result provides natural approximations by spectrahedral shadows and spectrahedra:

\begin{mainthm}\label{mainthm:approx}
For all $k\in\R$ and $n\geq 2$, there are inner and outer approximations of $\mathfrak R_{\sec\geq k}(n)$ by nested sequences $\mathfrak I_m$ of spectrahedral shadows and $\mathfrak O_m$ of spectrahedra,
\begin{equation*}
\mathfrak I_0\, \subset\,  \mathfrak I_1\, \subset \dots \subset \, \mathfrak I_m\, \subset \dots\subset   \mathfrak R_{\sec\geq k}(n)  \subset \dots\subset \, \mathfrak O_m\, \subset\dots\subset\,  \mathfrak O_1\, \subset\, \mathfrak O_0, \quad m\geq 0,
\end{equation*}
which are $\O(n)$-invariant and satisfy
$\overline{\bigcup_{m\geq0}\mathfrak I_m}=\mathfrak R_{\sec\geq k}(n)=\bigcap_{m\geq0}\mathfrak O_m$. 
\end{mainthm}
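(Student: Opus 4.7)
The plan is to base both hierarchies on Lasserre's sums-of-squares/moment framework applied to the polynomial positivity condition underlying $\mathfrak R_{\sec\geq k}(n)$. Regard $\Gr^+(\R^n)\subset\wedge^2\R^n$ as the compact real algebraic variety cut out by $\|\sigma\|^2=1$ together with the Plücker quadrics $P_1,\ldots,P_N$, so that $\sec_R\geq k$ is equivalent to the form $p_{R,k}(\sigma):=\langle R\sigma,\sigma\rangle-k\|\sigma\|^2$ being nonnegative on $\Gr^+(\R^n)$. Fix throughout an $\O(n)$-invariant probability measure $\mu$ on $\Gr^+(\R^n)$.

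For the inner hierarchy, I would declare $R\in\mathfrak I_m$ whenever $p_{R,k}$ admits a Putinar-type certificate
\begin{equation*}
p_{R,k}(\sigma)\;=\;s_0(\sigma)+\lambda(\sigma)\bigl(1-\|\sigma\|^2\bigr)+\sum_{j=1}^{N}q_j(\sigma)\,P_j(\sigma),
\end{equation*}
with $s_0$ an SOS polynomial of degree at most $2m$ and $\lambda,q_1,\ldots,q_N\in\R[\sigma]$ of degree at most $2m$. The Gram-matrix encoding of SOS presents $\mathfrak I_m$ as a spectrahedral shadow; averaging any such certificate over the compact group $\O(n)$, which preserves the Plücker ideal, the norm, and the SOS cone, gives $\O(n)$-invariance; the chain $\mathfrak I_m\subset\mathfrak I_{m+1}$ is a degree bump; and $\mathfrak I_m\subset\mathfrak R_{\sec\geq k}(n)$ is immediate. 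For density $\overline{\bigcup_m\mathfrak I_m}=\mathfrak R_{\sec\geq k}(n)$, note that if $\sec_R\geq k$ then the perturbation $R+\varepsilon\id$ satisfies $p_{R+\varepsilon\id,k}\geq\varepsilon$ strictly on $\Gr^+(\R^n)$, so Putinar's Positivstellensatz (Archimedean since $\Gr^+(\R^n)$ is compact) produces a certificate of some finite degree $2m(\varepsilon)$, placing $R+\varepsilon\id\in\mathfrak I_{m(\varepsilon)}$; letting $\varepsilon\to0$ finishes.

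For the outer hierarchy, let $v_m(\sigma)$ denote the column vector listing all monomials on $\wedge^2\R^n$ of degree at most $m$, and form the linear-in-$R$ localized moment matrix
\begin{equation*}
M_m(R)\;:=\;\int_{\Gr^+(\R^n)} p_{R,k}(\sigma)\,v_m(\sigma)\,v_m(\sigma)^{\top}\,d\mu(\sigma).
\end{equation*}
By the Gram characterization of SOS, the condition $M_m(R)\succeq 0$ is equivalent to $\int p_{R,k}\,s\,d\mu\geq0$ for every SOS polynomial $s$ of degree $\leq 2m$, so
\begin{equation*}
\mathfrak O_m\;:=\;\{R\in\Sym^2_b(\wedge^2\R^n):M_m(R)\succeq 0\}
\end{equation*}
is a spectrahedron by construction. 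Under $g\in\O(n)$ one has $M_m(gR)=G_m\,M_m(R)\,G_m^{\top}$ with $G_m$ the induced representation on polynomials of degree $\leq m$, so PSD is preserved and $\mathfrak O_m$ is $\O(n)$-invariant; monotonicity $\mathfrak O_{m+1}\subset\mathfrak O_m$ follows since larger $m$ tests against more SOS polynomials; and $\mathfrak R_{\sec\geq k}(n)\subset\mathfrak O_m$ holds because $s\geq 0$ combined with $p_{R,k}\geq 0$ on $\Gr^+(\R^n)$ makes the integrand pointwise nonnegative.

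The main obstacle will be the reverse inclusion $\bigcap_m\mathfrak O_m\subset\mathfrak R_{\sec\geq k}(n)$. Given $R\notin\mathfrak R_{\sec\geq k}(n)$ with $p_{R,k}(\sigma_0)<-\delta$ on a small neighborhood $U$ of some $\sigma_0\in\Gr^+(\R^n)$, the aim is to manufacture an SOS polynomial $s$ sufficiently concentrated on $U$ that $\int p_{R,k}\,s\,d\mu<0$, whence $M_m(R)\not\succeq 0$ for $2m=\deg s$. Such a bump-like SOS polynomial will be produced by approximating, via Stone--Weierstrass, a continuous nonnegative bump plus a small positive shift by a polynomial $p$ strictly positive on $\Gr^+(\R^n)$, and then invoking Putinar's Positivstellensatz to write $p=s_0+\lambda(1-\|\sigma\|^2)+\sum q_j P_j$ and taking $s=s_0$. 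The delicate step is balancing the approximation error of the bump against the negative contribution from $U$ while keeping the SOS degree finite; the $\O(n)$-invariance of $\mathfrak O_m$ is automatic from the construction and imposes no constraint on the choice of witness $s$.
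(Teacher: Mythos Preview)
Your argument is correct, but the outer hierarchy takes a genuinely different route from the paper's.

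For the inner hierarchy, both you and the paper build a Lasserre-type sequence of SOS relaxations. The paper works projectively in the homogeneous coordinate ring $\R[\Gr(n)]$: it defines $\mathfrak I_m$ as those $R$ for which $r^m\cdot R$ is a sum of squares in $\R[\Gr(n)]_{2m+2}$, where $r=\sum x_{ij}^2$, and obtains density by Scheiderer's projective Positivstellensatz. Your affine formulation with Putinar certificates over $\Gr^+(\R^n)$ is a legitimate variant; the two constructions need not coincide step-by-step but share the same mechanism and the same asymptotics.

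For the outer hierarchy the two approaches diverge. The paper does \emph{not} use moment matrices: it sets $\mathfrak O_m=\{R:\mathcal K(R,\Sym^p_0\R^n)\succeq0\text{ for }1\le p\le m+1\}$, where $\mathcal K(R,\Sym^p_0\R^n)$ is the curvature term in the Weitzenb\"ock formula for traceless symmetric $p$-tensors, given by an integral over $S^{n-1}$ rather than over $\Gr^+(\R^n)$. The equality $\bigcap_m\mathfrak O_m=\mathfrak R_{\sec\ge0}(n)$ is then imported from an earlier paper on Weitzenb\"ock formulae, not proved by a bump-function argument. What this buys is geometric content: the paper's $\mathfrak O_0$ is exactly the cone of operators with nonnegative Ricci curvature, and each $\mathfrak O_m$ is a classical differential-geometric condition. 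Your moment-matrix construction $M_m(R)=\int_{\Gr^+} p_{R,k}\,v_m v_m^{\top}\,d\mu\succeq0$ is more self-contained and avoids invoking an external theorem; your Stone--Weierstrass plus Putinar argument to produce an SOS witness concentrated near a bad point is a clean way to establish $\bigcap_m\mathfrak O_m\subset\mathfrak R_{\sec\ge k}(n)$, and the balancing you flag as ``delicate'' is routine once the shift $\varepsilon$ and the uniform approximation error are chosen small relative to $\delta\,\mu(U)$. The trade-off is that your $\mathfrak O_m$ have no intrinsic Riemannian interpretation.
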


The inner approximation by spectrahedral shadows $\mathfrak I_m$, $m\geq0$, is constructed in the same spirit as the \emph{Lasserre hierarchy}~\cite{lasserre, parrilothesis}; and, if $k=0$, the first step $\mathfrak I_0$ coincides precisely with the subset of curvature operators with \emph{strongly nonnegative curvature}, see~\cite{strongnonneg,strongpos}.
The outer approximation by spectrahedra $\mathfrak O_m$, $m\geq0$, is given by curvature operators with positive-semidefinite  curvature terms in all Weitzenb\"ock formulae for traceless symmetric $p$-tensors with $p\leq m+1$, see~\cite[Thm.~A]{weitzenbock}; and, if $k=0$, the first step $\mathfrak O_0$ coincides with the subset of curvature operators with nonnegative Ricci curvature. Since all $\mathfrak I_m$ and $\mathfrak O_m$ are $\O(n)$-invariant, these approximations are \emph{geometric}, in the sense that they define coordinate-free curvature conditions.
We remark that $\mathfrak I_0=\mathfrak R_{\sec\geq k}(n)$ if and only if $n\leq 4$, while $\mathfrak O_0=\mathfrak R_{\sec\geq k}(n)$ if and only if $n=2$. By Theorem~\ref{mainthm:spectrahedra}, these approximations  \emph{do not stabilize} after finitely many steps $m\geq0$, for all $n\geq5$.

In our third main result, we restrict to dimension $n=4$ to exploit the description of $\mathfrak R_{\sec\geq k}(4)$ as a spectrahedral shadow in order to obtain an \emph{explicit description} of this set as an \emph{algebraic interior}, see Definition~\ref{def:alginterior}.

\begin{mainthm}\label{mainthm:4dim}
The set $\mathfrak R_{\sec\geq k}(4)$ is an algebraic interior with minimal defining polynomial $\mathfrak p_k\colon\Sym^2_b(\wedge^2\R^4)\to\R$, given by
\begin{equation}\label{eq:pk}
\mathfrak p_k(R)=\disc_x\!\big(\!\det(R-k\id +x\,*)\big).
\end{equation}
More precisely, $\mathfrak R_{\sec\geq k}(4)=\overline{\mathcal C_k}$, where $\mathcal C_k$ is the only connected component of the set $\big\{R\in \Sym^2_b(\wedge^2\R^4):\mathfrak p_k(R)>0\big\}$ such that $(k+1)\id\in\overline{\mathcal C_k}$.
\end{mainthm}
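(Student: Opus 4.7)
My plan is to exploit Thorpe's trick (Proposition~\ref{prop:thorpe}) to realize $\mathfrak R_{\sec\geq k}(4)$ as the linear projection, forgetting the variable $x$, of the spectrahedron $\widetilde S=\{(R,x):R-k\id+x*\succeq 0\}$. For $R$ fixed, the univariate polynomial $p_R(x):=\det(R-k\id+x*)$ has degree~$6$ with nonzero leading coefficient $\det(*)=-1$, so $\mathfrak p_k(R)=\disc_x(p_R)$ is a well-defined polynomial on $\Sym^2_b(\wedge^2\R^4)$ that is not identically zero. Once this is set up, the argument has a main technical step (boundary control), a soft connectedness step (the two inclusions), and a harder algebro-geometric step (minimality), which is the chief obstacle.

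The central technical step is the inclusion $\partial\mathfrak R_{\sec\geq k}(4)\subset\{\mathfrak p_k=0\}$. For $R$ on the boundary, Thorpe's trick together with the asymptotic behavior $\lambda_{\min}(R-k\id+x*)\to-\infty$ as $|x|\to\infty$ produces a maximizer $x_0\in\R$ at which $R-k\id+x_0*$ is positive semidefinite and singular. If $\ker(R-k\id+x_0*)$ has dimension at least two, a Schur complement computation (choosing a basis adapted to the kernel) shows $x_0$ is a root of $p_R$ of multiplicity at least two. Otherwise, let $v$ be a unit vector spanning the kernel; first-order perturbation of the simple minimum eigenvalue, combined with the critical-point condition for the maximizer, yields $0=\frac{d}{dx}\lambda_{\min}\big|_{x_0}=\langle v,*v\rangle$, and using $\operatorname{adj}(R-k\id+x_0*)=c\,vv^T$ for some $c\neq 0$ gives
\begin{equation*}
p_R'(x_0)=\tr\bigl(\operatorname{adj}(R-k\id+x_0*)\cdot *\bigr)=c\langle v,*v\rangle=0,
\end{equation*}
so $x_0$ is again a double root of $p_R$ and $\mathfrak p_k(R)=0$.

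With this boundary control, both inclusions in $\mathfrak R_{\sec\geq k}(4)=\overline{\mathcal C_k}$ follow by connectedness. The connected set $\mathcal C_k\subset\{\mathfrak p_k>0\}$ is disjoint from $\partial\mathfrak R_{\sec\geq k}(4)\subset\{\mathfrak p_k=0\}$, while $(k+1)\id\in\overline{\mathcal C_k}$ lies in the interior of $\mathfrak R_{\sec\geq k}(4)$ (as $\sec_{(k+1)\id}\equiv k+1>k$); hence points of $\mathcal C_k$ arbitrarily close to $(k+1)\id$ are in $\mathfrak R_{\sec\geq k}(4)$, forcing $\mathcal C_k\subset\mathfrak R_{\sec\geq k}(4)$. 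For the reverse inclusion, I would first verify that $\mathfrak p_k\geq 0$ throughout the interior: after translating in $x$ it suffices to treat the case $R-k\id\succ 0$, where tracking the eigenvalue curves $\lambda_i(x)$ of $R-k\id+x*$ (asymptotically $\pm x$, all positive at $x=0$) exhibits three zero crossings on each side of $0$, so $p_R$ has six real roots counted with multiplicity and nonnegative discriminant. Then any interior point $R_0$ with $\mathfrak p_k(R_0)>0$ can be joined to a generic perturbation $R_1\in\mathcal C_k$ of $(k+1)\id$ by a segment in the interior, which after a small generic perturbation avoids the codimension-one locus $\{\mathfrak p_k=0\}$, placing $R_0\in\mathcal C_k$; interior points with $\mathfrak p_k=0$ are approximated from $\mathcal C_k$. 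The same argument yields uniqueness of $\mathcal C_k$: any other component of $\{\mathfrak p_k>0\}$ whose closure contains $(k+1)\id$ would supply nearby interior points in $\mathcal C_k$, contradicting disjointness of components.

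The main obstacle is the \emph{minimality} of $\mathfrak p_k$: one must prove that $\mathfrak p_k$ is squarefree and that the Zariski closure of $\partial\mathfrak R_{\sec\geq k}(4)$ equals all of $\{\mathfrak p_k=0\}$, rather than a proper subvariety. Concretely, this requires ruling out extraneous irreducible components of the discriminant hypersurface arising from root collisions of $p_R$ occurring outside the positive semidefinite region of the Hodge pencil $R-k\id+x*$. I would attack this via an irreducibility or generic-smoothness argument for the discriminant of the Hodge pencil, combined with a degree count matching the degree of $\mathfrak p_k$ against that of the Zariski closure of $\partial\mathfrak R_{\sec\geq k}(4)$, the latter parametrized by the natural projection from the incidence variety $\{(R,\sigma):\sec_R(\sigma)=k,\ \sigma\text{ is a critical point of }\sec_R\}$ to curvature-operator space.
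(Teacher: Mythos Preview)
Your boundary argument is correct and pleasantly different from the paper's: rather than invoking the complex matrix $TRT$ and arguing by contradiction from a simple eigenvalue, you use the first-order optimality condition at the maximizer of $\lambda_{\min}$ together with the adjugate identity $p_R'(x_0)=\tr\bigl(\operatorname{adj}(R-k\id+x_0*)\cdot *\bigr)$. This is fine, and the case split on $\dim\ker$ is handled correctly (when the kernel is $\geq 2$-dimensional the adjugate vanishes, so $p_R'(x_0)=0$ automatically).

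The genuine gap is in your reverse inclusion. You write that a segment in the interior ``after a small generic perturbation avoids the codimension-one locus $\{\mathfrak p_k=0\}$.'' A generic path cannot be made to avoid a codimension-one set: think of any hyperplane separating two points. What you need is that $\{\mathfrak p_k=0\}\cap\mathfrak R_{\sec>k}(4)$ has codimension $\geq 2$. Your nonnegativity observation $\mathfrak p_k\geq 0$ on the interior does place the interior zero set inside the critical locus $\{\nabla\mathfrak p_k=0\}$, but that only yields codimension $\geq 2$ once you know $\mathfrak p_k$ is \emph{squarefree} --- which you have not established at this point (and which is essentially the irreducibility you defer to the last step). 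The paper handles this by a direct, self-contained argument (its Proposition~\ref{prop:codiminterior}): if $R$ is in the interior and $p_R$ has a multiple root $x_0$, one conjugates $R-k\id+x_0*$ by a square root $S$ of a positive-definite pencil member to turn the multiple root into a multiple eigenvalue of a \emph{real} symmetric matrix, hence a geometric multiplicity $\geq 2$ condition, forcing $\operatorname{rank}(R-k\id+x_0*)\leq 4$. Since the rank~$\leq 4$ locus in $\Sym^2(\wedge^2\R^4)$ has codimension $\geq 3$, its projection to $\Sym^2_b(\wedge^2\R^4)$ has codimension $\geq 2$, and the path argument then goes through.

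For minimality, your sketch via a degree count against an incidence variety is not obviously workable: you would need to compute the degree of the Zariski closure of $\partial\mathfrak R_{\sec\geq k}(4)$ independently and show it equals~$30$, and separately verify squarefreeness of $\mathfrak p_k$. The paper instead proves irreducibility of $\mathfrak p_k$ directly. The key trick is the identity $\disc_x(\det(R+x*))=\disc(TRT)$ with $T=\diag(1,1,1,\sqrt{-1},\sqrt{-1},\sqrt{-1})$, which (since $R\mapsto TRT$ is a linear automorphism of $\Sym^2(\C^6)$) reduces the question to irreducibility over $\C$ of the ordinary discriminant on $\Sym^2(\C^6)$; this in turn is proved by a canonical-form argument for complex symmetric matrices (Appendix~\ref{appendix}). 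Once $\mathfrak p_k$ is irreducible, minimality follows from Remark~\ref{rem:definingpoly}, and as a bonus irreducibility retroactively justifies the codimension~$\geq 2$ claim via your nonnegativity route --- but the paper's order of argument avoids this circularity.
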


In the above, $\disc_x$ denotes the discriminant in $x$, see Subsection~\ref{subsec:disc}.
Using a \emph{complexification trick} \eqref{eq:Ttrick}, the above polynomial $\mathfrak p_k(R)$ can be seen as the discriminant of a symmetric matrix, and hence more efficiently computed, e.g.~ using~\cite{parlett}. A result related to Theorem~\ref{mainthm:4dim}, where \eqref{eq:pk} is considered as a polynomial in $k\in\R$, was recently obtained by Fodor~\cite{fodor}.

Although Theorem~\ref{mainthm:4dim} falls short of giving a description of $\mathfrak R_{\sec\geq k}(4)$ as a semialgebraic set, it provides an explicit such description of another semialgebraic set that has $\mathfrak R_{\sec\geq k}(4)$ as (the closure of) one of its connected components.
Moreover, it follows from Theorem~\ref{mainthm:4dim}
that the algebraic boundary of $\mathfrak R_{\sec\geq k}(4)$, i.e., the Zariski closure of its topological boundary, is the zero set of the polynomial \eqref{eq:pk}.

As a computational application of the description of $\mathfrak R_{\sec\geq k}(4)$ as a spectrahedral shadow, we provide an efficient algorithm (different from semidefinite programming) to determine when a given $R\in\Sym^2_b(\wedge^2\R^4)$ belongs to this set.
This algorithm is based on \emph{Sturm's root counting} method, and detects membership in $\mathfrak R_{\sec\geq 0}(4)$ and also in $\mathfrak R_{\sec>0}(4)$, see Algorithms \ref{alg:poscurv} and \ref{alg:nonnegcurv}; the cases of other sectional curvature bounds (strict or not) are easily obtained from these with obvious modifications.
For $n\geq5$,  the approximations given by Theorem~\ref{mainthm:approx} allow to use
an iteration of semidefinite programs (see Algorithm~\ref{alg:relax}) to detect
membership in $\mathfrak R_{\sec\geq k}(n)$
except for a set of measure zero of bad inputs, 
where the algorithm does not halt.

Straightforward generalizations of Theorems~\ref{mainthm:spectrahedra}, \ref{mainthm:approx}, and \ref{mainthm:4dim} to 
sets of algebraic curvature operators on vectors spaces with \emph{semi-Riemannian inner products}, satisfying a suitable replacement for $\sec\geq k$, are discussed in Appendix~\ref{sec:semiriem}.

\subsection*{Relevance for applications}
Next, we highlight some applications to which this work is pertinent in Optimization, Information Theory, and Data Science. 

\subsubsection*{Semidefinite programming}
Semidefinite programming is a generalization of linear programming which attracted a lot of interest, as one can solve under mild assumptions a semidefinite program up to a fixed precision in time that is polynomial in the program description size \cite{nesterov}. Typical applications include polynomial optimization \cite{parrilo-sturmfels} or combinatorial optimization, e.g.~the Max-Cut Problem \cite{maxcut}. The standard form for a semidefinite program (SDP) is:
\begin{equation*}
\min_x \,\, c_1 x_1 + \ldots + c_n x_n ,
\end{equation*}
under the constraint that the real symmetric matrix $A_0+x_1 A_1 + \ldots + x_n A_n$ is positive-semidefinite, where $c_i \in \R$ and $A_0, \ldots , A_n$ are real symmetric matrices. Thus the feasible region of an SDP is a spectrahedron. By introducing slack variables one can also run an SDP on spectrahedral shadows. Therefore, in the theory of semidefinite programming, it is crucial to find a characterization of this special class of convex sets \cite{nemirovski}. While it is easy to see that every spectrahedral shadow is convex and semialgebraic, the first examples of convex semialgebraic sets that are not spectrahedral shadows were only found recently by Scheiderer~\cite{claus}. In this work, we provide further such examples (Theorem~\ref{mainthm:spectrahedra}). Moreover, we believe that our simplification and concretization of Scheiderer's criterion for being a spectrahedral shadow (Theorem~\ref{thm:claus}) will be useful in the future for assessing the applicability of SDP. The question on the expressive power of semidefinite programming is also of ample interest in Theoretical Computer Science because of its intimate connection to the Unique Games Conjecture~\cite{ugc}.

\subsubsection*{Information Geometry}
Consider the space of parameters $M$ of a probability distribution $p(x,\theta)$ with the Riemannian metric given by 
the Fisher information matrix
\begin{equation}\label{eq:fisher}
\g_{ab}(\theta)=\mathrm E\left[\frac{\partial^2 i(x,\theta)}{\partial \theta_a\partial \theta_b} \right]=
\int_X \frac{\partial^2 i(x,\theta)}{\partial \theta_a\partial \theta_b}\,p(x,\theta)\;\dd x,
\end{equation}
where $\theta=(\theta_1,\dots,\theta_n)\in M$ are local coordinates, $i(x,\theta)=-\log p(x,\theta)$ is the Shannon information, and $x$ is drawn from the value space of the random variable~$X$.
Such Riemannian manifolds $(M,\g)$ are called \emph{statistical manifolds}, and are the central objects of study in Information Geometry, which leverages methods from Differential Geometry and Geometric Analysis in Probability Theory, Statistical Inference, and Information Science. Applications of Information Geometry are fast growing and widespread, ranging from neural networks, machine learning and signal processing~\cite{amari} to
complexity of composite systems, evolutionary dynamics, and Monte Carlo stochastic sampling~\cite{ig-book}.
Since the Riemannian curvature of the Fisher metric \eqref{eq:fisher} allows to detect critical parameter values where a phase transition occurs~\cite[p.~100]{ig-book}, our Algorithms~\ref{alg:relax}, \ref{alg:poscurv}, and \ref{alg:nonnegcurv}, in Section~\ref{sec:relaxalgo}, may provide valuable computational tools for such applications. Namely, given a probability distribution with $n$ parameters, the full range of sectional curvatures at a point $\theta$ in the corresponding $n$-dimensional statistical manifold $(M,\g)$ can be determined if $n\leq4$, or approximated if $n\geq5$, by repeatedly invoking the above algorithms with the curvature operator of $(M,\g)$ at $\theta$ as input, and raising/lowering the corresponding lower/upper curvature bounds.

\subsubsection*{Geometric Data Analysis}
Many tools in Data Science, such as the notion of \emph{mean} (or \emph{centroid}), and its use, e.g., in $k$-means clustering, and Principal Component Analysis (PCA), which is fundamental to address the ``curse of dimensionality'', were originally conceived for data lying inside Euclidean space $\R^n$. Given the recent considerable availability and interest in manifold-valued data, the development of suitable nonlinear replacements for these tools became of great importance.
Fr\'echet mean (FM) and Principal Geodesic Analysis (PGA), respectively, are convenient generalizations that 
attracted particular interest for applications in medical imaging~\cite{pga0,pga}. 
However, these can only be used assuming that the (known) manifold $(M,\g)$, where the data points lie, satisfies certain geometric constraints. For instance, uniqueness of FM for data points in $(M,\g)$ with $\sec\leq K$, $K>0$, is guaranteed if these points lie inside a geodesic ball of radius $r<\tfrac12\min\!\left\{\operatorname{inj} M, \frac{\pi}{\sqrt{K}} \right\},$ where $\operatorname{inj} M$ is the injectivity radius of $(M,\g)$, see~\cite{afsari}. Similarly, PGA relies on closest-point projection operators onto geodesic submanifolds of $(M,\g)$, which can also be controlled in terms of sectional curvature bounds. Thus, Algorithms~\ref{alg:relax}, \ref{alg:poscurv}, and \ref{alg:nonnegcurv}, in Section~\ref{sec:relaxalgo}, through their estimation of sectional curvature bounds, allow to estimate on which (portions of) manifolds FM and PGA can be applied reliably for data analysis. Note that these algorithms only determine sectional curvature bounds \emph{point-by-point}, so the above strategy involves using a sufficiently dense net of sample points on $(M,\g)$ to which such algorithm is applied.

\subsection*{Discussion of proofs}
In order to simplify the exposition, now and throughout the paper we only consider the sectional curvature bound $\sec\geq0$, for the reasons laid out in Remark~\ref{rem:obvious}. The proof of Theorem~\ref{mainthm:spectrahedra}~\eqref{A3} uses much heavier theoretical machinery than Theorem~\ref{mainthm:4dim}, namely the deep recent results of Scheiderer~\cite{claus}, while  Theorem~\ref{mainthm:spectrahedra}~\eqref{A2} is a consequence of Theorem~\ref{mainthm:4dim}. Theorem~\ref{mainthm:approx} relies on \cite{weitzenbock} to produce the outer approximation by spectrahedra, and on an adaptation of the Lasserre hierarchy method for the inner approximation by spectrahedral shadows.

There are three main steps in the proof of Theorem~\ref{mainthm:4dim}, which is presented in Section~\ref{sec:dim4}. First, a complexification trick \eqref{eq:Ttrick} is used to establish that $\mathfrak p:=\mathfrak p_0$, see \eqref{eq:pk}, vanishes on the topological boundary of $\mathfrak R_{\sec\geq0}(4)$, see Proposition~\ref{prop:zeroboundary}. Second, 
we show that the vanishing locus of $\mathfrak p$ does not disconnect the  interior of $\mathfrak R_{\sec\geq0}(4)$ for dimensional reasons, see Proposition~\ref{prop:codiminterior}. These two facts already imply that $\mathfrak R_{\sec\geq0}(4)$ is an algebraic interior with defining polynomial $\mathfrak p$, so it only remains to prove that $\mathfrak p$ is \emph{minimal}. This 
follows from irreducibility of $\mathfrak p$, which is an application of canonical forms for complex symmetric matrices, see  Appendix~\ref{appendix}.

The intimate connection between algebraic curvature operators and quadratic forms on the Grassmannian of $2$-planes is at the foundation of the proof of Theorem~\ref{mainthm:spectrahedra}~\eqref{A3}.
More generally, given a real projective variety $X\subset\C P^N$, and a (real) quadratic form $f$ on $X$, that is, an element in the degree 2 part $\R[X]_2$ of its homogeneous coordinate ring, the \emph{value} of $f$ at a real point $x\in X(\R)$ is not well-defined, however its \emph{sign} is. Indeed, $f(\lambda\tilde x)=\lambda^2 f(\tilde x)$ for any representative $\tilde x\in\R^{N+1}$ and $\lambda\in\R\setminus\{0\}$.
Thus, one may consider the set $\textrm P_X\subset\R[X]_2$ of all nonnegative quadratic forms on $X(\R)$, see \eqref{eq:PX}, which clearly contains the set $\Sigma_X$ of quadratic forms that are sums of squares of elements in $\R[X]_1$, see \eqref{eq:SigmaX}.
The characterization of varieties $X$ for which $\textrm P_X=\Sigma_X$ as those of \emph{minimal degree} is a landmark result recently obtained by Blekherman, Smith, and Velasco~\cite{blekhermanetal}.
This fits into the broader question of which nonnegative functions are sums of squares, which has a long history, dating back to Minkowski, Hilbert~\cite{hilbert} and Artin~\cite{artin}.

In the case of the Grassmannian $X=\Gr(n)$, 
which is determined by quadratic equations $\omega_i=0$ called \emph{Pl\"ucker relations}, 
see \eqref{eq:grassm}, 
the connection alluded to above takes the form of
the identification
\begin{equation}\label{eq:ident}
\R[\Gr(n)]_2\cong\Sym^2_b(\wedge^2\R^n).
\end{equation}
Namely, $\Sym^2(\wedge^2\R^n)$ can be identified as usual with quadratic forms on $\wedge^2\R^n$ by associating each 
$R$ to $q_R(\alpha)=\langle R(\alpha),\alpha\rangle$. 
On the one hand, $\R[\Gr(n)]_2$ is by definition the \emph{quotient} $\Sym^2(\wedge^2\R^n)/\operatorname{span}(\omega_i)$, since $\omega_i$ generate the vanishing ideal of $\Gr(n)$. On the other hand, the \emph{orthogonal complement} of $\operatorname{span}(\omega_i)$ in $\Sym^2(\wedge^2\R^n)$ is exactly $\Sym^2_b(\wedge^2\R^n)$, yielding \eqref{eq:ident}, see Subsection~\ref{subsec:grasscurvop} for details.

Under the identification \eqref{eq:ident}, the set $\textrm P_{\Gr(n)}\subset\R[\Gr(n)]_2$ corresponds to $\mathfrak R_{\sec\geq0}(n)\subset\Sym^2_b(\wedge^2\R^n)$, while $\Sigma_{\Gr(n)}$ corresponds to the set of curvature operators with strongly nonnegative curvature, see Example~\ref{ex:grasspsdsos}.
As the Grassmannian~$\Gr(4)$ has minimal degree, $\textrm P_{\Gr(4)}=\Sigma_{\Gr(4)}$ by \cite{blekhermanetal}.
This recovers the Finsler--Thorpe trick, since the only Pl\"ucker relation in dimension $n=4$ is given by $\omega_1(R)=\langle *R,R\rangle=0$. Furthermore, as $\Gr(n)$ does not have minimal degree for all $n\geq5$, there exist $P\in \textrm P_{\Gr(n)}\setminus\Sigma_{\Gr(n)}$ which translates to the
failure of higher-dimensional analogues of the Finsler--Thorpe trick (explicit $P$'s were obtained by Zoltek~\cite{Zoltek79}). 
Such a $P$ is the key input to apply a criterion of Scheiderer~\cite{claus} to show that $\textrm P_{\Gr(n)}\cong \mathfrak R_{\sec\geq0}(n)$ is not a spectrahedral shadow, as claimed in Theorem~\ref{mainthm:spectrahedra}~\eqref{A3}. 
In fact, we extract from \cite{claus} an easily applicable criterion, Theorem~\ref{thm:claus}, which implies \cite[Thm.~3]{hamza} and is of independent interest.

More generally, the Grassmannians $\textrm Gr_k(n)$ of $k$-planes do not have minimal degree 
if and only if $2\leq k\leq n-2$ and $n\geq5$,
and hence $\textrm P_{\textrm Gr_k(n)}\neq\Sigma_{\textrm Gr_k(n)}$ in this range. Scheiderer's criterion still applies in this situation, and leads to the conclusion that $\textrm P_{\textrm Gr_k(n)}$, $2\leq k\leq n-2$, $n\geq5$, are not spectrahedral shadows, see Corollary~\ref{cor:notspec}.
Since $\Sigma_X$ is a spectrahedral shadow for any projective variety $X$, this can be interpreted as a strengthening of \cite{blekhermanetal} for the class of Grassmannians $X=\textrm{Gr}_k(n)\subset\C P^{\binom{n}{k}-1}$; namely,
$\textrm P_X$ is a spectrahedral shadow if and \emph{only if} $\textrm P_X=\Sigma_X$.
The same strengthening was observed by Scheiderer~\cite[Cor.~4.25]{claus} for the class of degree $d$ Veronese embeddings $X=\C P^n\subset \C P^{\binom{n + d}{d}-1}$. Nevertheless, such a strengthening does not hold in full generality, as exemplified by curves $X$ of positive genus (e.g., elliptic curves). Such $X$ do not have embeddings of minimal degree, however $\textrm P_X$ is always a spectrahedral shadow since it is the dual of the convex hull of a curve, which is a spectrahedral shadow by Scheiderer~\cite{scheider1}.

\vspace{-0.02cm}

\subsection*{Acknowledgements} It is a great pleasure to thank
Bernd Sturmfels for introducing the second-named author to the first- and third-named authors, and suggesting the complexification trick~\eqref{eq:Ttrick}. We also thank Alexander Lytchak for supporting a visit by the second-named author to the University of Cologne, during which excellent working conditions allowed us to finalize this paper. The first-named author is grateful to the Max Planck Institut f\"ur Mathematik in Bonn for the hospitality in the summer of 2019, which made it possible to regularly meet the other authors.

The first-named author was supported by the National Science Foundation grant DMS-1904342, and by the Max Planck Institut f\"ur Mathematik in Bonn, and the third-named author was supported by the National Science Foundation grant DMS-2005373 and the Deutsche Forschungsgemeinschaft grants DFG ME 4801/1-1 and DFG SFB TRR 191.

\section{Preliminaries}\label{sec:cagprel}

\subsection{Riemannian Geometry}
Given a Riemannian manifold $(M,\g)$ and $p\in M$, the \emph{curvature operator} at $p$ is the symmetric endomorphism $R\in\Sym^2(\wedge^2 T_pM)$,
\begin{equation}\label{eq:curvopmanifold}
\langle R(X\wedge Y),Z\wedge W\rangle=\g\big(\nabla_Y\nabla_X Z-\nabla_X\nabla_Y Z+\nabla_{[X,Y]}Z,W\big),
\end{equation}
where $\nabla$ denotes the Levi-Civita connection, and $\langle\cdot,\cdot\rangle$ denotes the inner product induced by $\g$ on $\wedge^2 T_pM$. Curvature operators  $R$ satisfy the \emph{(first) Bianchi identity}:
\begin{equation}\label{eq:bianchi}
\langle R(X\wedge Y),Z\wedge W\rangle+\langle R(Y\wedge Z),X\wedge W\rangle+\langle R(Z\wedge X),Y\wedge W\rangle=0,
\end{equation}
for all $X,Y,Z,W\in T_pM$.
Given $X,Y\in T_pM$ two $\g$-orthonormal tangent vectors, the \emph{sectional curvature} of the plane $\sigma$ spanned by $X$ and $Y$ is 
\begin{equation}\label{eq:sec}
\sec(\sigma)=\langle R(X\wedge Y),X\wedge Y\rangle.
\end{equation}

Since the present paper is only concerned with \emph{pointwise} properties of curvature operators, we henceforth identify $T_pM\cong\R^n$ and define \emph{(algebraic) curvature operators} as elements $R\in\Sym^2(\wedge^2\R^n)$ that satisfy the Bianchi identity \eqref{eq:bianchi}. We denote by $\Sym^2_b(\wedge^2\R^n)\subset \Sym^2(\wedge^2\R^n)$ the subspace of such curvature operators, 
and by $\mathfrak b$ the orthogonal projection onto its complement, so that $\Sym^2_b(\wedge^2\R^n)=\ker\mathfrak b$. Elements $R\in\Sym^2(\wedge^2\R^n)$ are sometimes called \emph{modified curvature operators}.

We identify $\wedge^4\R^n$ with a subspace of $\Sym^2(\wedge^2\R^n)$ via
\begin{equation}\label{eq:wedge4sym2wedge2}
\langle \omega(\alpha),\beta\rangle=\langle \omega,\alpha\wedge\beta\rangle, \quad \mbox{ for all } \omega\in\wedge^4\R^n,\; \alpha,\beta\in\wedge^2\R^n.
\end{equation}
Note that $\wedge^4\R^n$ is the  image of $\mathfrak b$, i.e., the orthogonal complement of $\Sym^2_b(\wedge^2\R^n)$.

\subsection{Grassmannians and curvature operators}\label{subsec:grasscurvop}
The above classical definitions from Riemannian geometry can be conveniently reinterpreted in terms of the algebraic geometry of the Grassmannian of $2$-planes.
This relationship forms the \emph{raison d'\^etre} of this paper.

The natural coordinates $x_{ij}$, $1\leq i<j\leq n$, in $\wedge^2\C^n$ induced from the standard basis $e_1,\dots,e_n$ of $\C^n$ are called \emph{Pl\"ucker coordinates}.
The \emph{Grassmannian}
\begin{equation}\label{eq:grassm}
\Gr(n)\subset\mathds{P}(\wedge^2 \C^n)\cong\C P^{\binom{n}{2}-1}
\end{equation}
of $2$-planes in $\C^n$ is the real projective variety defined by the \emph{Pl\"ucker relations}; namely the zero locus of the quadratic forms associated to a basis of $\wedge^4 \R^n$, considered as $\binom{n}{4}$ homogeneous quadratic polynomials on (the Pl\"ucker coordinates of) $\wedge^2 \C^n$, cf.~\eqref{eq:wedge4sym2wedge2}. 
These quadratic forms generate the \emph{homogeneous vanishing ideal} $I_{\Gr(n)}\subset\C[x_{ij}]$ of $\Gr(n)$, and the \emph{homogeneous coordinate ring} $\C[\Gr(n)]$ of $\Gr(n)$  is given by $\C[x_{ij}]/I_{\Gr(n)}$. The rings $\C[x_{ij}]$ and $\C[\Gr(n)]$, as well as the ideal $I_{\Gr(n)}$, have natural graded structures; as usual, we denote their degree $d$ part with the  subscript $_d$.

The Grassmannian of $2$-planes in $\R^n$ is the set $\Gr(n)(\R)$ of real points of $\Gr(n)$, which we also denote $\Gr(\R^n)$.
The (oriented) Grassmannian $\Gr^+(\R^n)\subset S^{\binom{n}{2}-1}$ in the Introduction is the double-cover of $\Gr(\R^n)\subset\R P^{\binom{n}{2}-1}$ given by the inverse image under the natural projection map.

We identify symmetric endomorphisms $R\in\Sym^2(\wedge^2\R^n)$ with their associated quadratic form $q_R\in\R[x_{ij}]_2$, which is a polynomial in the Pl\"ucker coordinates $x_{ij}$:
\begin{equation*}
q_R(x_{ij})=\left\langle R\left(\sum_{i<j} x_{ij} e_i\wedge e_j\right),\sum_{i<j} x_{ij} e_i\wedge e_j\right\rangle.
\end{equation*}
To simplify notation, we use the same symbol $R$ for both of these objects, and henceforth identify $\Sym^2(\wedge^2\R^n)=\R[x_{ij}]_2$. Under this identification, the subspace $\wedge^4\R^n\subset\Sym^2(\wedge^2\R^n)$ corresponds to the degree $2$ part $(I_{\Gr(n)})_2\subset\R[x_{ij}]_2$ of the graded ideal $I_{\Gr(n)}$.
In particular, its orthogonal complement $\Sym^2_b(\wedge^2\R^n)$ shall be identified with the quotient $\R[\Gr(n)]_2=\R[x_{ij}]_2/(I_{\Gr(n)})_2$, as claimed in \eqref{eq:ident}.

\begin{equation*}
 \xymatrix{ \wedge^4\R^n\; \ar@{^{(}->}^{\!\!\!\!\!\!\!\!\!\!\!\eqref{eq:wedge4sym2wedge2}}[r] \ar@{=}[d] & \Sym^2(\wedge^2\R^n) \ar@{=}[d] \ar@{->>}^{}[r]  & \Sym^2_b(\wedge^2\R^n)\ar@{=}[d] \\
(I_{\Gr(n)})_2 \,\ar@{^{(}->}[r] & \R[x_{ij}]_2 \ar@{->>}[r] &\R[\Gr(n)]_2 }
\end{equation*}

The \emph{sectional curvature} function $\sec_R\colon \Gr^+(\R^n)\to\R$ determined by a (modified) curvature operator $R\in\R[x_{ij}]_2$ is its restriction to $\Gr^+(\R^n)\subset\wedge^2\R^n$, cf.~\eqref{eq:sec}. Since $\sec_R$ is invariant under the antipodal map on $S^{\binom{n}{2}-1}$, it descends to a function on $\Gr(\R^n)$ also denoted by $\sec_R$. 

\begin{definition}\label{def:rseckn}
Given $k\in\R$ and $n\geq2$, let 
\begin{equation*}
\mathfrak R_{\sec\geq k}(n)=\big\{R\in\Sym^2_b(\wedge^2\R^n):\sec_R\geq k\big\},
\end{equation*}
and similarly for $\mathfrak R_{\sec> k}(n)$, $\mathfrak R_{\sec\leq k}(n)$, and $\mathfrak R_{\sec< k}(n)$.
\end{definition}

\begin{remark}\label{rem:obvious}
In order to simplify the exposition, we henceforth consider only the sectional curvature bound $\sec\geq0$, as other sectional curvature bounds can be easily recovered using the following elementary properties:
\begin{enumerate}[(i)]
\item $\mathfrak R_{\sec\geq k}(n)=\overline{\mathfrak R_{\sec> k}(n)}$ and $\mathfrak R_{\sec\leq k}(n)=\overline{\mathfrak R_{\sec< k}(n)}$;
\item $\operatorname{int}\!\big(\mathfrak R_{\sec\geq k}(n)\big)=\mathfrak R_{\sec> k}(n)$ and $\operatorname{int}\!\big(\mathfrak R_{\sec\leq k}(n)\big)=\mathfrak R_{\sec< k}(n)$;
\item $\sec_R\geq k$ if and only if $\sec_{R-k\id}\geq0$, and $\sec_R\leq k$ if and only if $\sec_{k\id-R}\geq0$. In particular, $\mathfrak R_{\sec\geq k}(n)$ and $\mathfrak R_{\sec\leq k}(n)$ are affine images of $\mathfrak R_{\sec\geq 0}(n)$.
\end{enumerate}
The above are direct consequences of linearity of $R\mapsto\sec_R$ and $\sec_{\id} \equiv1$.
\end{remark}

\subsection{Discriminants}\label{subsec:disc}
Given a polynomial $p(x)=a_nx^n+\dots+a_1x+a_0\in\C[x]$, the \emph{discriminant} of $p(x)$ is
a polynomial $\disc_x(p(x))\in\Z[a_0,\dots,a_n]$ with integer coefficients whose variables are the coefficients of $p(x)$, defined as
\begin{equation}\label{eq:disc}
\disc_x(p(x))=a_n^{2n-2}\prod_{i<j}(r_i-r_j)^2,
\end{equation}
where $r_1,\dots,r_n\in\C$ are the roots of $p(x)$. 
It can be computed explicitly in terms of $a_i$, $0\leq i\leq n$, by taking the determinant of the \emph{Sylvester matrix} of $p(x)$ and $p'(x)$.
Clearly, $\disc_x(p(x))=0$ if and only if $p(x)$ has a root of multiplicity $\geq2$.
Note that $\disc_x(-p(x))=\disc_x(p(x))=\disc_x(p(-x))$. 
The discriminant of an $n\times n$-matrix $A$ is defined as the discriminant of its characteristic polynomial, 
 that is, $\disc(A)=\disc_x(\det(A-x\id))$. Thus, $\disc(A)=0$ if and only if $A$ has an eigenvalue of algebraic multiplicity~$\geq2$. 
For a description of $\disc(A)$ as a determinant, see \cite{parlett}.
Irreducibility of $\disc(A)$ for (symmetric) matrices is studied in Appendix~\ref{appendix}.

\subsection{Spectrahedra and their shadows}
We now recall basic notions from convex algebraic geometry, mostly without proofs. As a reference, we recommend~\cite{SIAMbook}.

\begin{definition}
A \emph{spectrahedron} is a set $S\subset \R^n$ of the form
\begin{equation}\label{eq:spectrahedron}
S=\left\{x\in\R^n : \; A+\sum_{i=1}^n x_i B_i\succeq 0\right\},
\end{equation}
where $A, B_i\in\Sym^2(\R^d)$ are symmetric matrices, and $M\succeq0$ means $M$ is positive-semidefinite.
A \emph{spectrahedral shadow} is the image of a spectrahedron under a linear projection, i.e., a set $S\subset\R^n$ of the form
\begin{equation*}
S=\left\{x\in\R^n : \exists\, y\in\R^m, \; A+\sum_{i=1}^n x_i B_i+\sum_{j=1}^m y_j C_j \succeq 0\right\}.
\end{equation*}
\end{definition}

\begin{remarks}\label{rem:specrem}
The following are basic facts about spectrahedra and their shadows:
\begin{enumerate}[(i)]
\item Both spectrahedra and their shadows are convex semialgebraic sets. Furthermore, spectrahedra are closed;
\item The class of spectrahedral shadows contains linear subspaces, polyhedra and all closed convex semialgebraic sets of dimension two~\cite{scheider1};
\item The class of spectrahedral shadows is closed under intersections, linear projections, convex duality, (relative) interior, and closure.
\end{enumerate}
\end{remarks}

The following notion was introduced by Helton and Vinnikov~\cite{heltonvinnikov}, and used in their proof of a conjecture of Peter Lax from 1958, see also \cite[Sec.~6.2.2]{SIAMbook}.

\begin{definition}\label{def:alginterior}
 A closed subset $S\subset \R^n$ is called an \emph{algebraic interior} if it is the closure of a connected component of the set $\{x\in\R^n:\,p(x)>0\}$ for some polynomial $p\in\R[x_1,\ldots,x_n]$, which is called a \emph{defining polynomial} of $S$.
\end{definition}

\begin{remark}\label{rem:definingpoly}
Let $S$ be an algebraic interior, and $p$ be a defining polynomial of minimal degree. Then $p$ divides every defining polynomial of $S$, see~\cite[Lemma 2.1]{heltonvinnikov}. In particular, defining polynomials of minimal degree are unique up to a positive constant factor.
\end{remark}

\begin{lemma}\label{lemma:spectralgint}
Every spectrahedron $S$ with nonempty interior $\operatorname{int}(S)$ is an algebraic interior whose minimal defining polynomial $p$ satisfies $p(x)\neq0$ for all $x\in\operatorname{int}(S)$.
\end{lemma}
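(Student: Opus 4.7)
Write $A(x) := A + \sum_{i=1}^n x_i B_i$, so that $S = \{x \in \R^n : A(x) \succeq 0\}$. The natural candidate for a defining polynomial of $S$ is $p(x) := \det A(x)$, which is manifestly nonnegative on $S$ and strictly positive wherever $A(x)$ is positive definite. The proof would proceed in three stages: first, reduce to a representation in which $p$ is not identically zero; second, verify strict positivity of $p$ on $\operatorname{int}(S)$; third, identify $S$ with the closure of the connected component of $\{p > 0\}$ containing $\operatorname{int}(S)$, and pass to the minimal defining polynomial via Remark~\ref{rem:definingpoly}.

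\emph{Reduction to a nondegenerate pencil.} Fix any $x_0 \in \operatorname{int}(S)$, and for $v \in \ker A(x_0)$ consider the affine function $f_v(x) := \langle A(x) v, v \rangle$. Since $f_v \geq 0$ on $S$, vanishes at $x_0$, and $x_0$ lies in an open subset of $S$, $f_v$ must vanish on a neighborhood of $x_0$, and an affine function vanishing on an open set is identically zero. For every $x \in S$, $A(x) \succeq 0$ together with $\langle A(x) v, v \rangle = 0$ forces $A(x) v = 0$; because $x \mapsto A(x) v$ is affine and vanishes on the open set $\operatorname{int}(S)$, it vanishes on all of $\R^n$. Thus $\ker A(x_0)$ coincides with the common null space $K := \{v \in \R^d : A(x) v = 0 \text{ for every } x\}$. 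Restricting the pencil to $W := K^\perp$ yields a family $\tilde A(x) : W \to W$ with trivial common null space and with $\tilde A(x) \succeq 0 \Leftrightarrow A(x) \succeq 0$, so it defines the same spectrahedron $S$.

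\emph{Positivity on the interior and identification of $S$.} Set $p(x) := \det \tilde A(x)$. Applying the reduction argument to $\tilde A$ shows that for every $x \in \operatorname{int}(S)$, $\ker \tilde A(x)$ lies in the common null space of $\tilde A$, which is $W \cap K = K^\perp \cap K = 0$; hence $\tilde A(x) \succ 0$ and $p(x) > 0$ on $\operatorname{int}(S)$. Because $\operatorname{int}(S)$ is convex and hence connected, it lies in a single connected component $C$ of $\{p > 0\}$. For the reverse containment, join any $y \in C$ to $x_0$ by a path inside $C$: along such a path the eigenvalues of $\tilde A(x)$ depend continuously on $x$ and their product $p$ never vanishes, so none of them changes sign; since all eigenvalues at $x_0$ are positive, they remain so at $y$, giving $\tilde A(y) \succ 0$ and $y \in \operatorname{int}(S)$. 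Thus $C = \operatorname{int}(S)$, and since $S$ is a closed convex set with nonempty interior we have $S = \overline{\operatorname{int}(S)} = \overline{C}$. This exhibits $S$ as an algebraic interior with defining polynomial $p$ nonvanishing on $\operatorname{int}(S)$, and by Remark~\ref{rem:definingpoly} the minimal defining polynomial divides $p$ and therefore shares this nonvanishing property.

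The main obstacle is the reduction step: without quotienting by the common null space $K$, the determinant $\det A(x)$ can vanish identically and fail to be a defining polynomial at all. Once the restriction to $K^\perp$ is in place, the remaining arguments—vanishing of the affine form $f_v$, positivity of $\tilde A$ on $\operatorname{int}(S)$, and the continuity-of-eigenvalues identification of the component—are routine.
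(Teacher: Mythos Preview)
Your proof is correct and follows essentially the same strategy as the paper's: reduce to a representation in which the pencil is positive definite on $\operatorname{int}(S)$, take $p=\det$ of that pencil, and invoke Remark~\ref{rem:definingpoly} to pass to the minimal defining polynomial. The only differences are presentational: the paper phrases the reduction as ``take $d$ minimal and derive a contradiction via block matrices,'' whereas you explicitly identify the common null space $K$ and restrict to $K^\perp$; and you spell out the eigenvalue-continuity argument showing $\operatorname{int}(S)$ is an entire connected component of $\{p>0\}$, which the paper leaves implicit.
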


\begin{proof}
Assume (without loss of generality) that $d$ in the semidefinite representation \eqref{eq:spectrahedron} of $S$ is minimal.
We claim that $A+\sum_{i=1}^n x_i B_i\succ0$ for all $x\in\operatorname{int}(S)$. Indeed, suppose that this does not hold at some $x_*\in\operatorname{int}(S)$. We may assume that $x_*=0$ and $e_1\in\ker A$. 
This implies that $A$ and $B_i$ are of the form
\begin{equation*}
A=\begin{pmatrix}
0 & 0\\
0 & A'
\end{pmatrix}, \qquad
B_i=\begin{pmatrix}
b_i & v_i^{\mathrm t}\\
v_i & B_i'
\end{pmatrix},
\end{equation*}
where $A', B_i'\in\Sym^2(\R^{d-1})$, $b_i\in\R$, and $v_i\in\R^{d-1}$ is a column vector. Since $0\in\operatorname{int}(S)$, it follows that
$\sum_{i=1}^n x_i b_i\geq0$ for all $x\in\R^n$ near $0$, so
$b_i=0$, $1\leq i\leq n$. Then, applying the same reasoning to the appropriate $2\times 2$-submatrices, it follows that $v_i=0$, $1\leq i \leq n$. Therefore, $S$ admits the semidefinite representation $S=\left\{x\in\R^n : \; A'+\sum_{i=1}^n x_i B'_i\succeq 0\right\}$, contradicting the minimality of $d$.

The spectrahedron $S$ is hence an algebraic interior 
with defining polynomial $\det\!\big(A+\sum_{i=1}^n x_i B_i\big)$, which is positive in $\operatorname{int}(S)$. In particular, the minimal defining polynomial $p(x)$ of $S$ is also positive in $\operatorname{int}(S)$, see Remark~\ref{rem:definingpoly}.
\end{proof}

\begin{example}
It is easy to show that the convex hull of two disjoint unit discs in the plane is not an algebraic interior (this set is called the \emph{football stadium}). It is clearly a spectrahedral shadow, and not a spectrahedron by Lemma~\ref{lemma:spectralgint}.
\end{example}

\begin{example}
The \emph{TV screen} $\{(x,y)\in\R^2:\, x^4+y^4\leq 1\}$ is an algebraic interior and a spectrahedral shadow, but not a spectrahedron.
\end{example}

\subsection{Quadratic forms on real projective varieties}\label{ssec:quadforms}
Let $X\subset\C P^N$ be a real projective variety that is irreducible, not contained in any hyperplane, and whose set of real points $X(\R)\subset\R P^N$ is Zariski dense in $X$. Let $\R[X]$ be its homogeneous coordinate ring, i.e., the polynomial ring in $N+1$ variables modulo the homogeneous vanishing ideal of $X$. Inside the degree $2$ part $\R[X]_2$, we consider the convex cone
\begin{equation}\label{eq:SigmaX}
\Sigma_X=\big\{f\in\R[X]_2:\,f=g_1^2+\ldots+g_s^2, \; g_i\in\R[X]_1\big\}
\end{equation}
consisting of all sums of squares of linear forms, as well as the convex cone
\begin{equation}\label{eq:PX}
\textrm{P}_X=\big\{f\in\R[X]_2:\,f\geq0 \text{ on } X(\R) \big\}
\end{equation}
of all \emph{nonnegative} quadratic forms on the real points $X(\R)$. For the latter, note that the sign of $f$ at any point $x\in X(\R)$ is well-defined because $f$ has even degree. Clearly, $\Sigma_X\subset\textrm{P}_X$. Blekherman, Smith, and Velasco~\cite[Thm.~1]{blekhermanetal} showed that $\Sigma_X=\textrm{P}_X$ if and only if $X$ has minimal degree, namely $\deg(X)=\textrm{codim}(X)+1$. It is worth pointing out that $\textrm{P}_X$ and $\Sigma_X$ are pointed closed convex cones with nonempty interior. 
Moreover, $\Sigma_X$ is a spectrahedral shadow: it is the image of the cone of positive-semidefinite matrices under the linear map sending a symmetric matrix $A$ to  $x^{\mathrm t} A x\in\R[X]_2$ where $x=(x_0,\ldots,x_N)^{\mathrm t}$.

In the special case in which $X\subset\C P^N$ is a \emph{quadric}, that is, the zero set of a single quadratic form, the above conclusion $\Sigma_X=\textrm{P}_X$ holds without any additional assumptions and has been known for almost 100 years~\cite{finsler}.

\begin{lemma}[Finsler]\label{lemma:finsler}
Let $A,B\in\Sym^2(\R^d)$. The following are equivalent:
\begin{enumerate}[\rm (i)]
\item $\langle Av,v\rangle\geq0$ for all $v\in\R^d\setminus\{0\}$ such that $\langle Bv,v\rangle=0$;
\item there exists $x\in\R$ such that $A+xB\succeq0$.
\end{enumerate}
The analogous statement replacing all inequalities by strict inequalities also holds. Moreover, if {\rm (i)} holds and there exists $v\in\R^d\setminus\{0\}$ with $\langle Av,v\rangle=\langle Bv,v\rangle=0$ and $B$ has full rank, then $x$ in {\rm (ii)} is unique.
\end{lemma}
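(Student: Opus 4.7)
The direction (ii) $\Rightarrow$ (i) is immediate: if $A+xB\succeq 0$ and $\langle Bv,v\rangle=0$, then $\langle Av,v\rangle=\langle(A+xB)v,v\rangle\geq 0$; the strict analog is identical.

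For (i) $\Rightarrow$ (ii), the plan is to argue by convex separation in $\R^2$. The set $K=\{(\langle Av,v\rangle,\langle Bv,v\rangle):v\in\R^d\}$ is a closed convex cone with apex at the origin by a classical theorem of Dines on the joint range of two real quadratic forms. Condition (i) is precisely the assertion that $K$ misses the open negative $a$-axis $\{(a,0):a<0\}$. Since $K$ is a closed convex cone disjoint from an open ray, elementary convex separation produces a supporting line through the origin: in the generic situation this line has the form $\{a+xb=0\}$ for some $x\in\R$, which immediately yields $K\subset\{a+xb\geq 0\}$ and hence $A+xB\succeq 0$. The degenerate case is that the only admissible separating line is the $a$-axis itself, forcing $B$ to be semidefinite; this I would handle by a direct analysis on the splitting $\R^d=\ker B\oplus(\ker B)^\perp$, using the fact, provided by (i), that $A|_{\ker B}\succeq 0$, to produce $A+xB\succeq 0$ for $|x|$ sufficiently large and of the appropriate sign. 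The strict version follows from the same picture: (i) strict ensures that the joint image $\{(\langle Av,v\rangle,\langle Bv,v\rangle):\|v\|=1\}$ is a compact set disjoint from the closed ray $\{(a,0):a\leq 0\}$, so strict convex separation yields $A+xB\succ 0$.

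For uniqueness, suppose (i) holds, $B$ has full rank, and $v\neq 0$ satisfies $\langle Av,v\rangle=\langle Bv,v\rangle=0$. If some $x$ realizes (ii), then $A+xB\succeq 0$ together with $\langle(A+xB)v,v\rangle=0$ forces $(A+xB)v=0$, since the null set of a positive-semidefinite quadratic form coincides with the kernel of its matrix. If another $x'\neq x$ also satisfied (ii), subtracting the identities $(A+xB)v=0=(A+x'B)v$ would give $(x-x')Bv=0$, hence $Bv=0$; but $B$ is invertible and $v\neq 0$, a contradiction. The principal technical obstacle I anticipate is the convexity of the joint image (Dines's theorem); the secondary subtlety is the degenerate semidefinite case, which is the only situation in which plain convex separation is insufficient and must be supplemented by direct matrix analysis on the kernel of $B$.
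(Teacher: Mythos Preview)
The paper does not actually prove this lemma: it states the result, attributes it to Finsler, mentions Calabi's independent topological proof, and points to a survey. So there is no in-paper argument to compare against; your proposal must be judged on its own merits.

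Your overall strategy---Dines's joint-range convexity theorem followed by planar separation---is the classical one, and it works cleanly for the strict version and for the non-strict version whenever $B$ is indefinite. The uniqueness argument is correct as written.

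The genuine gap is precisely in the degenerate case you flagged. When $B$ is semidefinite with nontrivial kernel, knowing only that $A|_{\ker B}\succeq 0$ does \emph{not} let you force $A+xB\succeq 0$ for large $|x|$: the off-diagonal block of $A$ relative to $\ker B\oplus(\ker B)^\perp$ obstructs the Schur complement. Concretely, for $d=2$ take
\[
A=\begin{pmatrix}0&1\\1&0\end{pmatrix},\qquad B=\begin{pmatrix}0&0\\0&1\end{pmatrix}.
\]
Then (i) holds (the nonzero $v$ with $\langle Bv,v\rangle=0$ are those with $v_2=0$, for which $\langle Av,v\rangle=2v_1v_2=0$), yet $\det(A+xB)=-1$ for every $x\in\R$, so (ii) fails. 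Thus the non-strict implication (i)$\Rightarrow$(ii), taken literally, is false without an extra hypothesis such as ``$B$ is indefinite'' or ``$B$ has full rank''; your proposed direct analysis on $\ker B\oplus(\ker B)^\perp$ cannot be completed because the assertion itself breaks down in that regime. This does not affect the paper's application: in Thorpe's trick one has $B=*$, which is indefinite of full rank, so the degenerate case never arises.
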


Calabi \cite{calabi}  independently found an elegant topological proof of this result; see also the survey~\cite{kursad} for other proofs and discussion.

\begin{example}\label{ex:grasspsdsos}
The sets $\textrm{P}_X$ and $\Sigma_X$ have 
an important geometric interpretation
when $X=\Gr(n)$ is the Grassmannian of $2$-planes  \eqref{eq:grassm}.
Namely, keeping in mind the identifications described in Subsection~\ref{subsec:grasscurvop}, we have that
\begin{equation}\label{eq:pgrr}
\textrm{P}_{\Gr(n)}=\mathfrak R_{\sec\geq0}(n),
\end{equation}
and $\Sigma_{\Gr(n)}$ is the set of curvature operators with \emph{strongly nonnegative curvature}, see~\cite{strongnonneg,strongpos}. 

In order to compare $\textrm{P}_{\Gr(n)}$ and $\Sigma_{\Gr(n)}$ using the results in \cite{blekhermanetal}, note that
 \begin{equation*}
\operatorname{codim} \Gr(n)=\binom{n}{2}-1-2 (n-2)=\frac{(n-2)(n-3)}{2},
\end{equation*}
and, by \cite[Ex.~4.38]{3264},
\begin{equation*}
\deg\Gr(n)=\frac{(2(n-2))!}{(n-2)! (n-1)!}.
\end{equation*}
Thus, $\deg\Gr(n)=\operatorname{codim}\Gr(n)+1$
if and only if $n\leq4$.
Therefore, by Blekherman, Smith, and Velasco~\cite[Thm.~1]{blekhermanetal},
$\Sigma_{\Gr(n)}=\textrm{P}_{\Gr(n)}$ if and only if $n\leq4$. In particular, $\textrm{P}_{\Gr(n)}$ is a spectrahedral shadow for $n\leq4$. On the other hand, for all $n\geq5$, there exists a nonnegative quadratic form $P\in\textrm{P}_{\Gr(n)}$ that is not a sum of squares, which plays a crucial role in the proof of Theorem~\ref{mainthm:spectrahedra}~\eqref{A3}. An explicit example of such $P\in\textrm{P}_{\Gr(n)}\setminus\Sigma_{\Gr(n)}$, $n\geq5$, found by Zoltek~\cite{Zoltek79}, is given by:
\begin{equation*}
R_{\rm Zol}=x_{1 2}^2+2  x_{1 3}^2+2  x_{2 3}^2+2  x_{1 4}^2 +x_{1 5}^2+x_{3 4}^2 +2  x_{2 5}^2+2  x_{4 5}^2 -2  x_{1 2}  x_{3 4}-2  x_{1 2}  x_{1 5}-2  x_{3 4}  x_{1 5}.
\end{equation*}

\end{example}

\begin{remark}
Since real symmetric matrices are diagonalizable, $R\in \R[x_{ij}]_2$ is a sum of squares
if and only if $R\succeq0$, i.e.,
\begin{equation*}
\Sigma_{\C P^{\binom{n}{2} -1}}=\textrm{P}_{\C P^{\binom{n}{2} -1}}
\end{equation*}
is the cone of positive-semidefinite curvature operators.
\end{remark}

\subsection{Scheiderer's criterion}
The only methods currently available to prove that a certain convex semialgebraic set is \emph{not} a spectrahedral shadow were recently developed in a seminal work by Scheiderer~\cite{claus}. For example, he showed that $\textrm{P}_X$ is not a spectrahedral shadow when $X\subset\C P^{\binom{n+d}{d}-1}$ is the $d$th Veronese embedding of $\C P^n$ for every $n\geq2,\, d\geq3$; or $n\geq3,\, d\geq2$. Note that these are exactly the cases in which the $d$th Veronese embedding does not satisfy $\deg(X)=\textrm{codim}(X)+1$, see~\cite[Sec.~2.1.2]{3264}

In order to present Scheiderer's criterion, recall the following basic definitions and facts of convex geometry.
Given any subset $\Omega$ of a finite-dimensional real vector space $V$, the \emph{convex hull} and \emph{conic hull} of $\Omega$ are defined, respectively, as: 
\begin{equation*}
\begin{aligned}
\conv(\Omega)&=\left\{\textstyle\sum\limits_{i=1}^k \alpha_i v_i : v_i\in\Omega, \alpha_i\geq0, \textstyle\sum\limits_{i=1}^k \alpha_i=1, \, k\in\mathds N \right\}, \\
\cone(\Omega)&=\left\{\textstyle\sum\limits_{i=1}^k \alpha_i v_i : v_i\in\Omega, \alpha_i\geq0, \, k\in\mathds N \right\}.
\end{aligned}
\end{equation*}
Denote by $V^\vee$ the dual vector space of $V$.
The \emph{cone dual} of $\Omega$ is defined as
\begin{equation}\label{eq:conedual}
\Omega^*=\{\lambda\in V^\vee : \lambda(x)\geq0, \, \forall x\in \Omega\}
\end{equation}
and satisfies the following properties:
\begin{enumerate}[\rm (i)]
\item $\Omega^*\subset V^\vee$ is a closed convex cone;
\item $\Omega^*=\Big(\overline{\conv(\Omega)}\Big)^*$ and similarly for cone duals of any combinations of conic hull, convex hull, and closure;
\item\label{eq:bidual} $\Omega^{**}=(\Omega^*)^*=\overline{\cone(\Omega)}$;
\item $\Omega^*$ is a spectrahedral shadow whenever $\Omega$ is a spectrahedral shadow.
\end{enumerate}
For properties (iii) and (iv), see e.g.~\cite[(5.11)]{SIAMbook} and \cite[Thm.~5.57]{SIAMbook}, respectively.

\begin{definition}
Given $f\in\R[x_1,\dots,x_n]$, its \emph{homogenization} is
the unique homogeneous polynomial $f^h\in\R[t,x_1,\dots,x_n]$ with the same degree as $f$  such that $f^h(1,x_1,\dots,x_n)=f(x_1,\dots,x_n)$.
\end{definition}

We are now in position to present the following convenient criterion to check if a semialgebraic set is not a spectrahedral shadow, extracted from Scheiderer~\cite{claus}.

\begin{theorem}\label{thm:claus}
 Let $L\subset\R[x_1,\ldots,x_n]$ be a finite-dimensional vector space with $1\in L$, and let $f\in\R[x_1,\dots,x_n]$ be a nonnegative polynomial which is not a sum of squares. Suppose that, for all $y\in\R^n$, the coefficients of $f^h(t,x_1-y_1,\ldots,x_n-y_n)$, considered as a polynomial in the new variable $t$, belong to $L$. Then the set 
\begin{equation*}
K=\{g\in L:\, g(x)\geq0 \, \text{ for all } x\in\R^n\} 
\end{equation*}
is not a spectrahedral shadow.
\end{theorem}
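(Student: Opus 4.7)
The plan is to argue by contradiction via the dual cone, reducing the claim to an application of Scheiderer's main technical machinery in \cite{claus}. Suppose that $K$ is a spectrahedral shadow. Then by property (iv) of cone duals recalled in Subsection~\ref{ssec:quadforms}, its dual $K^{*}\subset L^\vee$ is also a spectrahedral shadow. Since $K$ is closed (as an intersection of the half-spaces $\{g\in L:g(y)\geq 0\}$), one has $K^{**}=K$, and a short calculation identifies $K^{*}$ as the closure of the conic hull of the evaluation functionals $\mathrm{ev}_y\colon g\mapsto g(y)$, for $y\in\R^n$.

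Next I use the translation hypothesis to produce additional structure in $K$. Writing $d=\deg f$ and expanding
\begin{equation*}
f^h(t,x-y)=\sum_{j=0}^{d} t^j\, c_{y,j}(x),\qquad c_{y,j}\in L,
\end{equation*}
I observe that $d$ must be even, since otherwise the top-degree homogeneous part of $f$ would change sign, contradicting $f\geq 0$. Then for every $(\lambda,y)\in\R\times\R^n$ the polynomial $\sum_j \lambda^j c_{y,j}(x) = f^h(\lambda,x-y)$ is nonnegative on $\R^n$ (since $d$ is even and $f\geq 0$), hence lies in $K$. In particular, $f$ itself, all its translates $f(\cdot-y)$, and all translates $f^{(d)}(\cdot-y)$ of its leading homogeneous part belong to $K$, joined by polynomial curves in $K$ as $\lambda$ varies; summing the $c_{y,j}$ also verifies the easy consequence $f\in L$.

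The final step is to invoke Scheiderer's construction from \cite{claus}, which takes as input a hypothetical spectrahedral lift of $K$ together with translation-invariance data of the kind captured by the $c_{y,j}$, and produces an explicit representation of $f$ as a sum of squares of polynomials in $\R[x_1,\ldots,x_n]$, contradicting the standing hypothesis. I expect the main difficulty to lie in this last step: transferring a putative semidefinite representation of $K^{*}$ into an SOS certificate for $f$ requires a perturbation/continuity argument along curves in the dual cone, exploiting that the SOS cone is itself a spectrahedral shadow. The role of the hypothesis in Theorem~\ref{thm:claus}---namely $1\in L$ and translation closure---is precisely to provide the ambient structure on $L$ that Scheiderer's argument requires, thereby packaging his criterion into a form applicable to arbitrary finite-dimensional $L\subset\R[x_1,\ldots,x_n]$ without reference to an underlying projective variety.
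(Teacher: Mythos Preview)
Your opening is right: the dual setup via evaluation functionals is exactly how the paper proceeds, and your identification $K^{*}=\overline{\cone(\{\phi_y:y\in\R^n\})}$ is correct. But the ``final step'' points in the wrong direction. Scheiderer's machinery in \cite{claus} does \emph{not} take a hypothetical semidefinite lift of $K$ and manufacture an SOS certificate for $f$. What it does (via \cite[Lem.~4.17, Prop.~4.18--4.19, Ex.~4.20, Rem.~4.21]{claus}) is show directly that the closed \emph{convex} hull
\[
C=\overline{\conv(\{\phi_y:y\in\R^n\})}\subset L^\vee
\]
is not a spectrahedral shadow; this is precisely where the non-SOS hypothesis on $f$ and the translation hypothesis on the coefficients of $f^h(t,x-y)$ are consumed. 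No contradiction of the form ``$f$ becomes a sum of squares'' ever arises.

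What your argument is missing is the bridge from $K^{*}$ back to $C$, and this is exactly where $1\in L$ is used. Since every evaluation satisfies $\phi_y(1)=1$, the set $C$ lies in the affine hyperplane $H=\{\lambda\in L^\vee:\lambda(1)=1\}$, and one checks $C=\overline{\cone(C)}\cap H=K^{*}\cap H$. Thus if $K$ were a spectrahedral shadow, so would $K^{*}$ be, and hence $C=K^{*}\cap H$ would be as well, contradicting the previous paragraph. Your proposal gestures at $1\in L$ as ``ambient structure'' but never performs this hyperplane slice; without it there is no way to pass from the conic hull $K^{*}$ (which you correctly identified) to the convex hull $C$ (which is the object Scheiderer's results actually control). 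The middle paragraph of your proposal, while containing correct observations about the $c_{y,j}$, does not feed into either step and can be dropped.
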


\begin{proof}
For each $x\in\R^n$, let $\phi_x\in L^\vee$ be the evaluation functional $\phi_x(f)=f(x)$. Note that $K=\{\phi_x,\,x\in\R^n\}^*=C^*$, where
\begin{equation*}
C=\overline{\conv(\{\phi_x, \, x\in\R^n\})}\subset L^\vee.
\end{equation*}
Since $f$ is nonnegative but not a sum of squares, the same holds for its homogenization $f^h$. Using this and that the coefficients of $f^h(t,x_1-y_1,\ldots,x_n-y_n)$ belong to $L$, one can show, by the exact same reasoning 
as in \cite[Ex.~4.20, Rem.~4.21]{claus}, which follow from \cite[Lem.~4.17, Prop.~4.18, Prop.~4.19]{claus}, that
$C$ is not a spectrahedral shadow.

Suppose, by contradiction, that $K$ is a spectrahedral shadow, so that its cone dual $K^*$ is also a spectrahedral shadow. By \eqref{eq:bidual}, we have that $K^*=C^{**}=\overline{\cone(C)}$. Since $1\in L$, every evaluation $\phi_x$ is contained the affine hyperplane $H=\{\lambda\in L^\vee : \lambda(1)=1\}$ and hence $C\subset H$. In particular, $C=\overline{\cone(C)}\cap H=K^*\cap H$ is a spectrahedral shadow, providing the desired contradiction.
\end{proof}

\section{Curvature operators in dimension 4}
\label{sec:dim4}

In this section, we prove Theorem~\ref{mainthm:4dim} and statement \eqref{A2} in Theorem~\ref{mainthm:spectrahedra}.
For simplicity, we only treat the case $\sec\geq0$ for the reasons discussed in Remark~\ref{rem:obvious}.
Furthermore, we denote by $\mathfrak p(R)$ the polynomial $\mathfrak p_0(R)$ from \eqref{eq:pk}.

\subsection{Curvature operators on 4-manifolds}
Recall that the \emph{Hodge star} operator is defined as $*\in\Sym^2(\wedge^2\R^4)$ corresponding to $\omega=e_1\wedge e_2\wedge e_3\wedge e_4\in\wedge^4\R^4$ under the identification \eqref{eq:wedge4sym2wedge2}. Its eigenvalues are $\pm 1$ and the corresponding eigenspaces $\wedge^2_\pm\R^4\cong\R^3$ consist of so-called \emph{self-dual} and \emph{anti-self-dual} $2$-forms. Any symmetric endomorphism $R\colon\wedge^2 \R^4\to\wedge^2 \R^4$ can be represented by a block matrix with respect to the decomposition $\wedge^2 \R^4=\wedge^2_+\R^4\oplus\wedge^2_-\R^4\cong\R^6$, 
\begin{equation}\label{eq:Rblocks}
R=\begin{pmatrix}
A & B \\
B^{\rm t} & C
\end{pmatrix}
\end{equation}
where $A$ and $C$ are symmetric $3\times 3$-matrices and $B$ is any $3\times 3$-matrix.
Note that
\begin{equation}\label{eq:hodgestar}
*=\begin{pmatrix}
\id & 0 \\
0 & -\id
\end{pmatrix},
\end{equation}
and the Bianchi identity \eqref{eq:bianchi} for $R$ as in \eqref{eq:Rblocks} is
$
\tr A-\tr C=\tr(R \, *)=\langle R,*\rangle=0.
$
Finally, denote by $\Sym^2(\C^6)$ the space of complex symmetric $6\times 6$-matrices. 

The following characterization of $\mathfrak R_{\sec\geq 0}(4)$ was given by Thorpe~\cite{Thorpe72}. Note that this an immediate application of Finsler's Lemma~\ref{lemma:finsler}. 

\begin{proposition}[Thorpe's trick]\label{prop:thorpe}
A curvature operator $R\in\Sym^2_b(\wedge^2\R^4)$ has $\sec_R\geq0$ if and only if there exists $x\in\R$ such that $R+x\,*\succeq0$, and analogously for $\sec_R>0$. Moreover, if $R$ has $\sec_R\geq0$ but not $\sec_R>0$, then $x\in\R$ is unique.
\end{proposition}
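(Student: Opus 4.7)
The plan is to deduce the proposition as an application of Finsler's Lemma~\ref{lemma:finsler} with the choice $A = R$ and $B = *$. Since $*$ has full rank (its eigenvalues are $\pm 1$, see \eqref{eq:hodgestar}), the uniqueness clause in Finsler's Lemma will also be available. The only non-trivial translation required is to identify the Finsler condition ``$\langle R\alpha,\alpha\rangle \geq 0$ for all $\alpha \in \wedge^2\R^4 \setminus \{0\}$ with $\langle *\alpha,\alpha\rangle = 0$'' with the sectional curvature condition $\sec_R \geq 0$.

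To carry out this identification, I would argue that a $2$-form $\alpha \in \wedge^2\R^4$ satisfies $\langle *\alpha,\alpha\rangle = 0$ if and only if $\alpha$ is decomposable, i.e., $\alpha = X \wedge Y$ for some $X,Y \in \R^4$. Indeed, by the identification \eqref{eq:wedge4sym2wedge2} together with the definition of the Hodge star, one has $\langle *\alpha,\alpha\rangle = \langle \omega, \alpha \wedge \alpha\rangle$, where $\omega = e_1\wedge e_2\wedge e_3\wedge e_4$; since $\omega$ spans $\wedge^4\R^4$, this vanishes exactly when $\alpha \wedge \alpha = 0$, which is the classical Pl\"ucker criterion for decomposability in dimension four (and, incidentally, the unique Pl\"ucker relation defining $\Gr(4) \subset \C P^5$). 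Thus the vanishing locus of $\langle *\alpha,\alpha\rangle$ in $\wedge^2\R^4$ is precisely the affine cone over $\Gr^+(\R^4)$, and on this locus $\langle R\alpha,\alpha\rangle = \|\alpha\|^2 \sec_R\!\left(\alpha/\|\alpha\|\right)$. Consequently, condition (i) in Finsler's Lemma is equivalent to $\sec_R \geq 0$, while condition (ii) reads exactly $R + x\,* \succeq 0$ for some $x \in \R$.

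The strict version follows in the same way from the strict version of Finsler's Lemma stated in Lemma~\ref{lemma:finsler}. For the final uniqueness claim, assume $\sec_R \geq 0$ but not $\sec_R > 0$, so there exists a unit $\sigma = X \wedge Y \in \Gr^+(\R^4)$ with $\sec_R(\sigma) = 0$. Then $\alpha := \sigma$ is a nonzero vector with $\langle R\alpha,\alpha\rangle = 0$ and $\langle *\alpha,\alpha\rangle = 0$; since $*$ has full rank, the addendum in Finsler's Lemma forces $x$ in $R + x\,*\succeq 0$ to be unique.

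The main obstacle (or rather the only point deserving care) is the algebraic identification of $\{\langle *\alpha,\alpha\rangle = 0\}$ with the Grassmann cone; everything else is a direct invocation of Lemma~\ref{lemma:finsler}. This step is completely clean once one records that, in dimension four, decomposability of a $2$-form is governed by the single quadratic Pl\"ucker relation $\alpha \wedge \alpha = 0$.
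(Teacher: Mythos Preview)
Your proof is correct and follows exactly the approach indicated in the paper, which simply asserts that the result is ``an immediate application of Finsler's Lemma~\ref{lemma:finsler}'' without spelling out the details. You have correctly supplied the key identification---that $\langle *\alpha,\alpha\rangle=0$ is the single Pl\"ucker relation characterizing decomposable $2$-forms in $\wedge^2\R^4$---and the uniqueness argument via the full rank of $*$, both of which the paper leaves implicit.
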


\subsection{\texorpdfstring{Vanishing of $\mathfrak p$ on $\partial \mathfrak R_{\sec\geq 0}(4)$}{Vanishing on the boundary}}
We make repeated use of the complex matrix
\begin{equation*}
T=\diag\big(1,1,1,\sqrt{-1},\sqrt{-1},\sqrt{-1}\,\big),
\end{equation*}
written in terms of the above identification \eqref{eq:Rblocks}.
Note that $T$ is a square root of $*$ and hence $\det(R+x*)=-\det(TRT+x\id)$, since $T(R+x*)T=TRT+x\id$ and $\det(T^2)=-1$. 
Thus, $\disc_x\!\big(\!\det(R+x*)\big)=\disc(TRT)$ by the properties of discriminants:
\begin{equation}\label{eq:Ttrick}
\begin{aligned}
\disc_x\!\big(\!\det(R+x*)\big)&=\disc_x\!\big(\!-\det(TRT+x\id)\big)\\
&=\disc_x(\det(TRT-x\id))\\
&=\disc(TRT).
\end{aligned}
\end{equation}

\begin{proposition}\label{prop:zeroboundary}
The polynomial $\mathfrak p(R)=\disc_x(\det(R+x\,*))$ vanishes on the (topological) boundary $\partial\, \mathfrak R_{\sec\geq0}$.
\end{proposition}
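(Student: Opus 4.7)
The plan is to exploit Thorpe's trick (Proposition \ref{prop:thorpe}) together with the Bianchi identity to show that any $R$ on the boundary forces the polynomial $p_R(x):=\det(R+x\,*)$ to have a repeated root, which directly yields $\mathfrak p(R)=\disc_x(p_R)=0$.

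First, I would fix $R\in\partial\mathfrak R_{\sec\geq0}(4)$ and unpack what this means geometrically. Since $\sec_R\geq 0$ but $\sec_R\not>0$, there is a $2$-plane $\sigma\subset\R^4$ with $\sec_R(\sigma)=0$. Writing $\sigma$ as $v=X\wedge Y$ for an orthonormal basis, $v$ is a decomposable $2$-form, so $v\wedge v=0$ and hence $\langle *v,v\rangle=0$ via the identification \eqref{eq:wedge4sym2wedge2}; simultaneously $\langle Rv,v\rangle=\sec_R(\sigma)=0$. By Finsler's Lemma~\ref{lemma:finsler} applied to $A=R$, $B=*$ (which has full rank), there exists a unique $x_0\in\R$ with $M(x_0):=R+x_0\,*\succeq0$, and since $\langle M(x_0)v,v\rangle=0$ and $M(x_0)\succeq0$, one has $v\in\ker M(x_0)$. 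Thus $x_0$ is a root of $p_R(x)$.

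Next, I would show that $x_0$ is in fact a \emph{double} root, by computing
\begin{equation*}
p_R'(x_0)=\tr\!\bigl(\operatorname{adj}(M(x_0))\cdot M'(x_0)\bigr)=\tr\!\bigl(\operatorname{adj}(M(x_0))\cdot *\bigr),
\end{equation*}
and considering two cases according to $r:=\operatorname{rank} M(x_0)$. If $r\leq 4$, then every $5\times 5$ minor of $M(x_0)$ vanishes, so $\operatorname{adj}(M(x_0))=0$ and $p_R'(x_0)=0$ trivially. If $r=5$, then $\ker M(x_0)=\R v$, and since $M(x_0)$ is symmetric PSD, $\operatorname{adj}(M(x_0))=\alpha\,vv^{\mathrm t}$ for some $\alpha\in\R$; hence
\begin{equation*}
p_R'(x_0)=\alpha\,\tr(vv^{\mathrm t}\,*)=\alpha\,\langle *v,v\rangle=0.
\end{equation*}
In either case $p_R(x_0)=p_R'(x_0)=0$, so $x_0$ is a repeated root of $p_R$ and $\mathfrak p(R)=\disc_x(p_R)=0$.

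I expect the only mildly delicate point to be the case distinction on $\operatorname{rank} M(x_0)$: one must verify that when the kernel has dimension $\geq 2$ the adjugate indeed vanishes identically, and when it is exactly $\R v$ the adjugate is proportional to $vv^{\mathrm t}$, so that the Bianchi-type identity $\langle *v,v\rangle=0$ (i.e.\ the single Pl\"ucker relation in dimension $4$) can be invoked to kill the first derivative. An alternative, essentially equivalent, route would be to use Rellich's theorem to pick a real-analytic eigenvalue $\lambda(x)$ of $M(x)$ with $\lambda(x_0)=0$ and eigenvector $v$, and then compute $\lambda'(x_0)=\langle v,*v\rangle/\|v\|^2=0$, so that $\lambda(x)=O((x-x_0)^2)$ and $p_R=\lambda\cdot\prod_{i\geq 2}\lambda_i$ vanishes to order $\geq 2$ at $x_0$; I would mention this as a complementary viewpoint but prefer the adjugate computation for its algebraic directness.
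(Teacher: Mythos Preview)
Your argument is correct and takes a genuinely different route from the paper's. The paper argues by contradiction: assuming $x_0$ is a \emph{simple} root of $p_R$, it uses the complexification trick $\det(R+x\,*)=-\det(TRT+x\id)$ to deduce that $\dim\ker(R+x_0\,*)=1$, so $R+x_0\,*$ has five strictly positive eigenvalues; then, since $x_0$ is a simple zero of the determinant, one can perturb $x$ slightly to make $\det(R+x\,*)>0$, forcing $R+x\,*\succ0$ and contradicting $\sec_R\not>0$. Your approach instead computes $p_R'(x_0)$ directly via Jacobi's formula and the structure of the adjugate, and kills it using the single Pl\"ucker relation $\langle *v,v\rangle=0$ for the decomposable kernel vector $v$. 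This is more explicit and makes transparent \emph{why} the root is double: it is precisely the decomposability of the zero-curvature $2$-plane, equivalently the Bianchi/Pl\"ucker identity in dimension~$4$, that forces $p_R'(x_0)=0$. The paper's argument, on the other hand, is perhaps quicker and avoids the adjugate bookkeeping, but it leans on the complexification $TRT$ (to pass from algebraic to geometric multiplicity) and a perturbation step, neither of which you need. Your two-case split on $\operatorname{rank} M(x_0)$ is the right way to handle the adjugate, and the identification $\operatorname{adj}(M(x_0))=\alpha\,vv^{\mathrm t}$ in the rank-$5$ case is standard and correct. The Rellich alternative you mention is also valid but unnecessary here.
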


\begin{proof}
Let $R\in\partial\, \mathfrak R_{\sec\geq0}$, which means that $\sec_R\geq0$ but $R$ does not satisfy $\sec_R>0$.
By the Finsler--Thorpe trick (Proposition \ref{prop:thorpe}), there exists $x_0\in\R$ such that $R+x_0\,*\succeq0$. It suffices to show  $x_0$ is a root of $\det(R+x\,*)$ with multiplicity $\geq2$.

Assume that $x_0$ is a simple root of this polynomial.
Then $-x_0$ is an eigenvalue of $TRT$ with algebraic (and hence geometric) multiplicity $1$. Since 
$R+x_0\,*=T^{-1}(TRT+x_0\id)T^{-1}$,
this implies that $\dim\ker(R+x_0\,*)=1$. As $R+x_0\,*\succeq0$, it has $5$ positive eigenvalues (counted with multiplicities). 
Since $x_0$ was assumed to be a simple root of $\det(R+x\,*)$, there exists $x$ near $x_0$ such that $\det(R+x\,*)>0$ and hence $R+x\,*\succ0$, contradicting the fact that $R$ does not satisfy $\sec_R>0$.
\end{proof}

\begin{remark}
The discriminant $\disc_x\!\big(\!\det(R+x\,*)\big)$ is a homogeneous polynomial of degree 30 in the coefficients of $R$.
\end{remark}

\begin{remark}
As a consequence of the above proof, $-x_0$ is an eigenvalue of $TRT$ with \emph{algebraic} multiplicity $\geq2$. We warn the reader that this does not imply that its \emph{geometric} multiplicity is $\geq2$, because $TRT$ is a \emph{complex} symmetric matrix, hence not necessarily diagonalizable.

In fact, the geometric multiplicity of $-x_0$ \emph{cannot} be always $\geq2$. Indeed, on the one hand, $\operatorname{codim}\!\big(\partial\, \mathfrak R_{\sec\geq0}\big)=1$. On the other hand, as in the proof of Proposition~\ref{prop:codiminterior} below, since the set of symmetric $6\times 6$-matrices with rank $\leq4$ has codimension $\geq3$ (see \cite[p.~72]{harris-tu}), it follows that
\begin{equation*}
\operatorname{codim}\!\big(\{R\in\Sym^2_b(\wedge^2 \R^4):\exists\, x_0\in\R, \,\operatorname{rank}(R+x_0\,*)\leq 4\}\big)\geq2.
\end{equation*}
\end{remark}

\subsection{\texorpdfstring{Zeroes of $\mathfrak p$ in $\mathfrak R_{\sec>0}(4)$}{Zeroes in the interior}}
We now study the interior vanishing locus of $\mathfrak p$.

\begin{proposition}\label{prop:codiminterior}
The zero set $\{R\in\mathfrak R_{\sec>0}(4):\mathfrak p(R)=0\}$ is a real subvariety of codimension $\geq2$, hence its complement $\{R\in\mathfrak R_{\sec>0}(4):\mathfrak p(R)>0\}$ is connected.
\end{proposition}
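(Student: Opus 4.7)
The plan is to prove the codimension bound by reducing it to a rank condition: I will show that, for every $R\in\mathfrak R_{\sec>0}(4)$,
\[
\mathfrak p(R)=0 \iff \exists\, x^*\in\R\text{ with }\operatorname{rank}(R+x^*\,*)\leq 4.
\]
The set $\{R\in\Sym^2_b(\wedge^2\R^4): \exists\, x^*\in\R,\,\operatorname{rank}(R+x^*\,*)\leq 4\}$ then has codimension $\geq 2$ by a standard dimension count: the variety of real symmetric $6\times 6$ matrices of rank $\leq 4$ has codimension $\geq 3$ in $\Sym^2(\wedge^2\R^4)$ (by Harris--Tu), and since $(R,x^*)\mapsto R+x^*\,*$ is a linear isomorphism $\Sym^2_b(\wedge^2\R^4)\times\R\to\Sym^2(\wedge^2\R^4)$, its preimage has codimension $\geq 3$ in the $21$-dimensional product, so projecting to $\Sym^2_b(\wedge^2\R^4)$ yields codimension $\geq 2$.

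Setting $f_R(x):=\det(R+x\,*)$, the complexification trick \eqref{eq:Ttrick} gives $\mathfrak p(R)=\disc(TRT)$, so $\mathfrak p(R)=0$ precisely when $TRT$ has an eigenvalue of algebraic multiplicity $\geq 2$. The eigenvalues of $TRT$ coincide with those of $*R$, with matching algebraic and geometric multiplicities, via the intertwining $w=Tv$ satisfying $*R\,w=\lambda w$ whenever $TRT v=\lambda v$; so the multiplicity of a root $x^*\in\C$ of $f_R$ equals the algebraic multiplicity of $-x^*$ as an eigenvalue of $*R$, while $\dim\ker(R+x^*\,*)$ equals its geometric multiplicity. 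The crux of the proof, which is the main obstacle, is to show that on the interior $\mathfrak R_{\sec>0}(4)$ the algebraic and geometric multiplicities of every eigenvalue of $*R$ coincide, and moreover that all those eigenvalues are real. Both facts will be extracted from the strict positivity $R+x_0\,*\succ 0$ for some $x_0\in\R$.

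For reality, suppose $*Rv=\lambda v$ with $v\in\C^6\setminus\{0\}$; then $Rv=\lambda\,*v$, and since $R,*$ are real symmetric both $v^H R v$ and $v^H * v$ are real numbers, with $v^H R v=\lambda\, v^H * v$. This forces $\lambda\in\R$ unless $v^H R v=v^H * v=0$, in which case $v^H(R+x_0\,*)v=0$, contradicting $R+x_0\,*\succ 0$ with $v\neq 0$. For diagonalizability, note that $*R$ is self-adjoint with respect to the indefinite real bilinear form $\langle a,b\rangle_{*}:=\langle a,*b\rangle$, because $\langle *R a,b\rangle_{*}=\langle a,Rb\rangle=\langle a,*R b\rangle_{*}$ using $*^2=\id$ and $R^T=R$. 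If $*R$ had a Jordan block of size $\geq 2$ at a real eigenvalue $\mu$, with eigenvector $v$ and generalized eigenvector $v'$ both taken real, the self-adjointness identity $\langle *R v,v'\rangle_{*}=\langle v,*R v'\rangle_{*}$, expanded via $*Rv=\mu v$ and $*Rv'=\mu v'+v$, gives $v^T * v=\langle v,v\rangle_{*}=0$; but then $(R+x_0\,*)v=(\mu+x_0)\,*v$ yields $\langle (R+x_0\,*)v,v\rangle=(\mu+x_0)\,v^T * v=0$, again contradicting $R+x_0\,*\succ 0$ with $v\neq 0$.

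With the algebraic and geometric multiplicities thus identified on the interior, $\mathfrak p(R)=0$ is equivalent to $\dim\ker(R+x^*\,*)\geq 2$ for some real $x^*$, which establishes the codimension $\geq 2$ bound. For the connectedness statement, the reality of every root of $f_R$ on $\mathfrak R_{\sec>0}(4)$ implies $\mathfrak p(R)=\prod_{i<j}(r_i-r_j)^2\geq 0$ throughout; since a real subvariety of codimension $\geq 2$ does not disconnect the open convex set $\mathfrak R_{\sec>0}(4)$, and $\mathfrak p$ is strictly positive at points with simple spectrum (e.g.\ near $R=\id$), the complement of the zero set coincides with $\{\mathfrak p>0\}$ and is connected.
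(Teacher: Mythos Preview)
Your proof is correct and reaches the same conclusion via the same overall reduction: show that on the interior, $\mathfrak p(R)=0$ forces $\operatorname{rank}(R+x^*\,*)\le 4$ for some real $x^*$, and then invoke the Harris--Tu codimension bound together with the one-dimensional projection. The difference lies in how you extract the rank drop. The paper chooses $\lambda$ with $R+\lambda\,*\succ0$, factors $R+\lambda\,*=S^{\mathrm t}S$, and rewrites $\det(R+(\lambda+t)\,*)$ as (a reparametrization of) the characteristic polynomial of the \emph{real symmetric} matrix $(S^{\mathrm t})^{-1}*S^{-1}$; the spectral theorem then gives algebraic $=$ geometric multiplicity for free. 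You instead argue intrinsically that $*R$ (similar to $TRT$) is self-adjoint for the indefinite form $\langle a,b\rangle_*=a^{\mathrm t}*b$, and use the definite form $R+x_0\,*$ to exclude isotropic eigenvectors (hence all eigenvalues are real) and nontrivial Jordan chains (hence $*R$ is diagonalizable). This is the standard mechanism behind diagonalizability of operators that are symmetric for one form and admit a compatible positive form; it is conceptually transparent but requires writing out the Jordan-block contradiction, whereas the paper's change of variables packages the same content into an appeal to the spectral theorem for real symmetric matrices. One small wording issue: your parenthetical ``e.g.\ near $R=\id$'' is slightly misleading since $\mathfrak p(\id)=0$; it would be cleaner to cite a concrete interior point with simple roots (as the paper does with $\operatorname{diag}(1,2,3,4,5,6)$) or simply note that such points are generic.
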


\begin{proof}
Consider the orthogonal projection $\pi \colon\Sym^2(\wedge^2\R^4)\to\Sym^2_b(\wedge^2\R^4)$. 
Since the set of matrices of rank $\leq4$ in $\Sym^2(\C^6)$ is a subvariety of codimension $3$, see e.g., \cite[p.~72]{harris-tu}, the real subvariety $\mathcal Y=\{R\in \Sym^2(\wedge^2\R^4):\operatorname{rank}(R)\leq 4\}$ has codimension $\geq3$.
Thus, $\pi(\mathcal Y)$  has codimension $\geq2$ since $\dim \pi(\mathcal Y)\leq\dim \mathcal Y$ and the ambient dimension drops by one. Thus, it suffices to show that for all $R\in\mathfrak R_{\sec>0}(4)$ with $\mathfrak p(R)=0$, there is $x_0\in\R$ with $\operatorname{rank}(R+x_0\,*)\leq4$, i.e., $R\in\pi(\mathcal Y)$.

Choose $\lambda\in\R$ such that $R+\lambda\, *\succ0$ and an invertible real $6\times 6$-matrix $S$ such that $R+\lambda\, *=S^\mathrm tS$.
Since $\mathfrak p(R)=0$, the polynomial $q(t)=\det(R+(\lambda+t)\,*)$ has a root $t_0\neq0$ of multiplicity $\geq2$. 
We claim that $\operatorname{rank}(R+x_0\,*)\leq4$ for $x_0=\lambda+t_0$. Note that
\begin{equation}\label{eq:trick}
\begin{aligned}
q(t)&= \det(S^\mathrm tS+t\,*)\\
&=(\det S)^2\det\!\big(\id+t \, (S^\mathrm t)^{-1} * S^{-1}\big)\\
&=(\det S)^2 t^6\det\!\left(\tfrac1t\id+ (S^\mathrm t)^{-1} * S^{-1}\right)\\
&=(\det S)^2 \tfrac{1}{s^6}\det\!\left((S^\mathrm t)^{-1} * S^{-1}-s\id\right),
\end{aligned}
\end{equation}
where $s=-\frac1t$. Since $q(t_0)=0$ and $q'(t_0)=0$, it follows that $-\frac{1}{t_0}$ is an eigenvalue of $(S^\mathrm t)^{-1} * S^{-1}$ with algebraic multiplicity $\geq2$. As this is a real symmetric matrix, the geometric multiplicity of $-\frac{1}{t_0}$ is also $\geq2$. 
Thus,
\begin{align*}
\operatorname{rank}\!\left(R + (\lambda+t_0)*\right)&=
\operatorname{rank}\!\left(\tfrac{1}{t_0}(R+\lambda\,*)+*\right)\\
&=\operatorname{rank}\!\left(* +\tfrac{1}{t_0} S^\mathrm t S\right)\\
&=\operatorname{rank}\!\left((S^\mathrm t)^{-1} * S^{-1}+\tfrac{1}{t_0} \id\right)\leq 4.\qedhere
\end{align*}
\end{proof}

\begin{remark}
It follows from \eqref{eq:trick} that the polynomial $t\mapsto\det(R+t\,*)$ only has real roots if $R\in\mathfrak R_{\sec\geq0}(4)$, since $(S^\mathrm t)^{-1} * S^{-1}$ is a real symmetric matrix. In particular, its discriminant $\mathfrak p(R)$ is nonnegative, see \eqref{eq:disc}.
\end{remark}

\subsection{\texorpdfstring{$\mathfrak R_{\sec\geq0}(4)$ as an algebraic interior}{Algebraic interior}}
We now prove Theorems~\ref{mainthm:4dim} and \ref{mainthm:spectrahedra} \eqref{A2}.

\begin{proof}[Proof of Theorem~\ref{mainthm:4dim}]
The zero set $\{\mathfrak p(R)=0\}$ contains the topological boundary of $\mathfrak R_{\sec\geq 0}(4)$ by Proposition~\ref{prop:zeroboundary} and has codimension $\geq2$ in its interior by Proposition~\ref{prop:codiminterior}.
By direct inspection, $\mathfrak p(R)>0$ at $R=\operatorname{diag}(1,2,3,4,5,6)\in\mathfrak R_{\sec\geq0}(4)$.
This implies that $\mathfrak R_{\sec\geq 0}(4)$ is an algebraic interior with defining polynomial $\mathfrak p(R)$, see Definition~\ref{def:alginterior}.
We claim that this polynomial $\mathfrak p\colon\Sym^2_b(\wedge^2 \R^4)\to\R$ is irreducible over $\R$, and hence it is a minimal defining polynomial, see Remark~\ref{rem:definingpoly}.

Denote by $\widetilde{\mathfrak p}\colon \Sym^2(\wedge^2\R^4)\to\R$ the polynomial $\widetilde{\mathfrak p}=\mathfrak p\circ\pi$, where $\pi$ is the orthogonal projection onto $\Sym^2_b(\wedge^2\R^4)$. Clearly, $\mathfrak p$ is irreducible if and only if $\widetilde{\mathfrak p}$ is irreducible. On the other hand, $\widetilde{\mathfrak p}(R)$ is given by the same formula \eqref{eq:pk} as $\mathfrak p(R)$ since shifts in the variable $x$ do not change the discriminant, see \eqref{eq:disc}.
It suffices to show that the complexification $\widetilde{\mathfrak p}\colon\Sym^2(\C^6)\to\C$ of $\widetilde{\mathfrak p}$ is irreducible over $\C$. 
This is a consequence of Proposition~\ref{prop:discirred} in the Appendix, because $\widetilde{\mathfrak p}(R)$ is the discriminant of $TRT$ according to \eqref{eq:Ttrick}, and $R\mapsto TRT$ is a linear isomorphism of $\Sym^2(\C^6)$.

Finally, the last statement in Theorem~\ref{mainthm:4dim} follows from the fact that the closure of any two connected components of $\{\mathfrak p(R)>0\}$ can only intersect at boundary points and $\id\in\operatorname{int}(\mathfrak R_{\sec\geq0}(4))$.
\end{proof}

\begin{proof}[Proof of Theorem~\ref{mainthm:spectrahedra} (2)]
The set $\mathfrak R_{\sec\geq0}(4)$ is a spectrahedral shadow as a consequence of the Finsler--Thorpe trick (Proposition~\ref{prop:thorpe}), see also Example~\ref{ex:grasspsdsos}. Furthermore, it is not a spectrahedron by Lemma~\ref{lemma:spectralgint}, since $\mathfrak p$ is a minimal defining polynomial for $\mathfrak R_{\sec\geq0}(4)$ by Theorem~\ref{mainthm:4dim}, and $\mathfrak p$ vanishes at the interior point $\id\in\mathfrak R_{\sec>0}(4)$, since $\mathfrak p(\id)=\disc_x\!\big(\!\det(\id+x\, *)\big)=\disc_x\!\big((1+x)^3(1-x)^3\big)=0$.
\end{proof}

\section{\texorpdfstring{Curvature operators in dimensions $\geq5$}{Curvature operators in dimensions greater than 5}}\label{sec:notshadow}

Using the notation from Section~\ref{sec:cagprel}, consider the Pl\"ucker  embedding of the Grassmannian $\Gr(5)$ in $\C P^9$, and 
recall that $\textnormal{P}_{\Gr(n)}\subset\R[\Gr(n)]_2$ is the subset of nonnegative quadratic forms on the real points of $\Gr(n)$, see \eqref{eq:PX}. The main step in the proof of Theorem~\ref{mainthm:spectrahedra}~\eqref{A3} is the following application of  Theorem~\ref{thm:claus}.

\begin{proposition}\label{prop:notspec}
The closed convex cone $\textnormal{P}_{\Gr(5)}$ is not a spectrahedral shadow.
\end{proposition}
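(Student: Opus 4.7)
The plan is to apply Theorem~\ref{thm:claus} in an affine chart of $\Gr(5)$. Parametrize an open chart $U\cong\R^6$ of $\Gr(5)(\R)$ by sending $y=(a_1,a_2,a_3,b_1,b_2,b_3)$ to the $2$-plane $\mathrm{span}\big(e_1+\sum_j a_j\,e_{j+2},\,e_2+\sum_j b_j\,e_{j+2}\big)$. Under this chart one has $x_{12}|_U=1$, $x_{1,j+2}|_U=b_j$ and $x_{2,j+2}|_U=-a_j$, while the remaining $x_{k\ell}|_U$ with $k,\ell\in\{3,4,5\}$ are the three $2\times 2$-minors of the matrix with columns $a,b\in\R^3$. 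Restriction to $U$ is therefore a linear isomorphism from $\R[\Gr(5)]_2\cong\Sym^2_b(\wedge^2\R^5)$ onto a $50$-dimensional subspace $L_U\subset\R[y_1,\ldots,y_6]$ of polynomials of degree at most $4$, and under this identification $\textrm{P}_{\Gr(5)}=\{g\in L_U:g\geq 0\text{ on }\R^6\}$.

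I first check the hypotheses of Theorem~\ref{thm:claus} for $L=L_U$: one has $1=x_{12}^2|_U\in L_U$, and the unipotent block matrices $\left(\begin{smallmatrix}I_2 & 0\\ W & I_3\end{smallmatrix}\right)\subset\GL_5$ (with $W\in\R^{3\times 2}$) act on $U$ precisely as translations, while the diagonal $\mathrm{diag}(1,1,\lambda,\lambda,\lambda)$ acts as the dilation $y\mapsto\lambda y$; both preserve $L_U$. Hence $L_U$ is translation-invariant and graded by degree in $y$, which ensures that for every $f\in L_U$ and every $y_0\in\R^6$, the coefficients of $f^h(t,y-y_0)$ in $t$ all lie in $L_U$. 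The crux is then to take $f:=R_{\mathrm{Zol}}|_U\in L_U$ and show that $f$ is nonnegative on $\R^6$ yet not a sum of squares in $\R[y_1,\ldots,y_6]$; nonnegativity is Zoltek's theorem (Example~\ref{ex:grasspsdsos}).

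To establish the non-SOS property, suppose for contradiction that $f=\sum_i g_i^{\,2}$ with $g_i\in\R[y]$; then each $g_i$ has degree at most $2$. A direct verification from the chart description shows that the degree-$\leq 2$ components of $L_U$ already span all of $\R[y]_{\leq 2}$, so each $g_i$ lifts uniquely to $\tilde g_i\in\R[\Gr(5)]_2$ with $\tilde g_i|_U=g_i$. Zariski density of $U\subset\Gr(5)$ then yields
\[
\sum_i \tilde g_i^{\,2}=R_{\mathrm{Zol}}\cdot x_{12}^{\,2}\qquad\text{in }\R[\Gr(5)]_4.
\]
Reducing modulo $(x_{12})$ gives $\sum_i\overline{\tilde g_i}^{\,2}=0$ in $\R[\Gr(5)]/(x_{12})$. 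Since $\{x_{12}=0\}\cap\Gr(5)$ is an irreducible Schubert subvariety with Zariski dense real points, the quotient $\R[\Gr(5)]/(x_{12})$ is a formally real integral domain, so each $\overline{\tilde g_i}=0$, i.e.\ $\tilde g_i=x_{12}\,h_i$ for some $h_i\in\R[\Gr(5)]_1$. Cancelling the non-zerodivisor $x_{12}^{\,2}$ in the domain $\R[\Gr(5)]$ produces $R_{\mathrm{Zol}}=\sum_i h_i^{\,2}$ in $\R[\Gr(5)]_2$, contradicting $R_{\mathrm{Zol}}\notin\Sigma_{\Gr(5)}$. Theorem~\ref{thm:claus} then delivers the conclusion that $\textrm{P}_{\Gr(5)}$ is not a spectrahedral shadow.

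The main obstacle is this last step, namely upgrading the Pl\"ucker-level obstruction $R_{\mathrm{Zol}}\notin\Sigma_{\Gr(5)}$ (a non-SOS statement modulo the quadratic Pl\"ucker ideal) to a genuine non-SOS statement in the ambient polynomial ring $\R[y_1,\ldots,y_6]$; the formal reality of the Schubert hyperplane section $\{x_{12}=0\}\cap\Gr(5)$ is what makes this upgrade possible.
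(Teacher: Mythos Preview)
Your proof is correct and takes a genuinely different route from the paper's in two key places.

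For the hypothesis that the $t$-coefficients of $f^h(t,y-y_0)$ lie in $L$, the paper works in the chart $x_{15}\neq 0$, writes the Pl\"ucker coordinates as $2\times 2$ minors of an explicit $2\times 5$ matrix, and verifies the claim by multilinearity of the determinant. You instead observe that the block-unipotent and torus elements of $\GL_5$ act on your chart as translations and dilations, so $L_U$ is translation-invariant and graded; this yields the hypothesis abstractly. Your argument is cleaner and visibly extends to any affine Schubert cell in any $\mathrm{Gr}_k(n)$.

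For the non-SOS step, the paper isolates the top-degree part $f_4=\sum p_k^2$, notes that each $p_k$ must vanish on the rank-$\leq 1$ locus of the generic $2\times 3$ matrix $\Gamma'$, invokes the classical fact that the ideal of $2\times 2$ minors of $\Gamma'$ is prime (Nullstellensatz), and concludes that each $g_k$ is a linear combination of minors of the full $2\times 5$ matrix, i.e.\ lies in $\psi(\R[\Gr(5)]_1)$. You instead lift each $g_i$ to $\tilde g_i\in\R[\Gr(5)]_2$, obtain $\sum\tilde g_i^{\,2}=R_{\mathrm{Zol}}\cdot x_{12}^{\,2}$ in degree $4$, and reduce modulo the Schubert divisor $\{x_{12}=0\}$; formal reality of its coordinate ring then forces $x_{12}\mid\tilde g_i$. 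The paper's argument is more self-contained, relying only on the elementary primality of a determinantal ideal; yours invokes two heavier facts (primality of $(x_{12})$ in $\R[\Gr(5)]$, which uses that the Pl\"ucker algebra is a UFD, and formal reality via smooth real points of the Schubert variety), but in return the argument is entirely structural and would transfer verbatim to other rational homogeneous varieties.
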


\begin{proof}
By \cite[Thm.~1]{blekhermanetal} or \cite{Zoltek79}, see Example~\ref{ex:grasspsdsos}, there exists a polynomial $P\in \textnormal{P}_{\Gr(5)}\setminus\Sigma_{\Gr(5)}$. In other words, $P$ is a nonnegative quadratic form that is not a sum of squares modulo the vanishing ideal of $\Gr(5)$, i.e., the ideal generated by the Pl\"ucker relations.
We consider the affine chart $U$ of $\Gr(5)$ defined by the Pl\"ucker coordinate $x_{15}$ being nonzero. Every point in $U$ is a $2$-plane of the form
\begin{equation*}
(e_1+x_{25}e_2+x_{35}e_3+x_{45}e_4)\wedge
(x_{12}e_2+x_{13}e_3+x_{14}e_4+e_5),
\end{equation*}
that is, the row span of the matrix
\begin{equation}\label{eq:gamma}
\Gamma=\begin{pmatrix} 1& x_{25}& x_{35} & x_{45}&0\\ 0& x_{12} & x_{13}&x_{14}&1 \end{pmatrix}.
\end{equation}
Note that $U$ is isomorphic to an affine complex space $\mathds{A}^{6}$ with coordinates $x_{1j}$ and $x_{i5}$, $2\leq i,j\leq 4$. 
Consider the linear map given by restriction from $\R[\Gr(5)]_2$ to the regular functions on $U$:
\begin{equation}
\psi\colon\R[\Gr(5)]_2 \longrightarrow\R[U]=\R[x_{12},x_{13},x_{14},x_{25}, x_{35} , x_{45}]
\end{equation}
that sends every Pl\"ucker coordinate $x_{ij}$ to the $2\times 2$-minor corresponding to the columns $i$ and $j$
of the matrix $\Gamma$. More precisely, 
the effect of applying $\psi$ to an element of $\R[\Gr(5)]_2$ represented by $\sum_{i<j,k<l} R_{ijkl} x_{ij}x_{kl}\in\Sym^2(\wedge^2\R^5)$ consists of making the following substitutions:
\begin{align}\label{eq:substitutions2}
&x_{15}\rightsquigarrow 1 & x_{23}\rightsquigarrow x_{25}x_{13}-x_{35}x_{12}\nonumber \\
&x_{1j}\rightsquigarrow x_{1j}, \; 2\leq j\leq4 & x_{24}\rightsquigarrow x_{25}x_{14}-x_{45}x_{12}\\
&x_{i5}\rightsquigarrow x_{i5}, \; 2\leq i\leq4 & x_{34}\rightsquigarrow x_{35}x_{14}-x_{45}x_{13}\nonumber
\end{align}

Note that since the real points of $U$ are dense in the real points of $\Gr(5)$, we have that $\psi$ is injective, and the subset of nonnegative polynomials contained in its image $L=\psi\big(\R[\Gr(5)]_2\big)$ is exactly $\psi(\textnormal{P}_{\Gr(5)})$. 
In particular, $\textnormal{P}_{\Gr(5)}$ is not a spectrahedral shadow if and only if $K=\{g\in L: g \geq 0 \text{ on } U(\R)\}$ is not a spectrahedral shadow.
We will show that the latter holds applying Theorem~\ref{thm:claus}.

First, note that $1\in L$ since it is the image of $(x_{15})^2$. 

\begin{claim}
The polynomial $f=\psi(P)$ is nonnegative but not a sum of squares.
\end{claim}

Nonnegativity of $f$ follows directly from the fact that $P$ is nonnegative.

For the sake of contradiction, suppose that
\begin{equation}\label{eq:fsos}
f=g_1^2+\ldots+g_r^2
\end{equation}
for some $g_k\in\R[U]$. 
Note that $\deg f\leq 4$, and hence $\deg g_k\leq 2$. Thus, we can write $g_k=l_k+p_k$, where $\deg l_k\leq 1$ and  $p_k$ are homogeneous polynomials of degree two. 
The homogeneous part of degree four of $f$ is then
\begin{equation}\label{eq:f4}
f_4=p_1^2+\ldots+p_r^2,
\end{equation}
and by \eqref{eq:substitutions2} it vanishes at the points where the matrix
\begin{equation}\label{eq:gammaprime}
\Gamma'=\begin{pmatrix} x_{25}& x_{35} & x_{45}\\ x_{12} & x_{13}&x_{14} \end{pmatrix}
\end{equation}
has rank at most $1$, since this is equivalent to the vanishing of its $2\times 2$ minors.
By \eqref{eq:f4},
this implies that every $p_k$ also vanishes at these points.

The ideal $I$ of $\C[x_{12},x_{13},x_{14},x_{25}, x_{35} , x_{45}]$ generated by the $2\times 2$ minors of $\Gamma'$ is prime and hence radical, see e.g.~\cite[Thm.~2.10]{minors}. Since the real points are Zariski dense in the complex affine variety defined by $I$, every $p_k$ vanishes on this variety. By Hilbert's Nullstellensatz, $p_k$ is a linear combination of the $2\times2$ minors of $\Gamma'$, with \emph{real} coefficients because $p_k$ are real. Since all of $x_{1j}$, $x_{i5}$, $2\leq i,j\leq 4$ and $1$ are $2\times2$ minors of $\Gamma$, we have that $g_k=l_k+p_k$ is a linear combination of $2\times2$ minors of $\Gamma$.
As the square of a linear combination of 
$2\times2$ minors of $\Gamma$ is the image by $\psi$ of a square in $\R[\Gr(5)]_2$, 
injectivity of $\psi$ and \eqref{eq:fsos} 
contradict the fact that $P$ is not a sum of squares, concluding the proof of the above Claim.

\medskip

The homogenization $f^h$ can be obtained substituting each Pl\"ucker coordinate $x_{ij}$ in $P$ by the $2\times 2$-minor corresponding to columns $i$ and $j$ of the matrix
\begin{equation*}
\begin{pmatrix} t& x_{25}& x_{35} & x_{45}&0\\ 0& x_{12} & x_{13}&x_{14}&t \end{pmatrix}.
\end{equation*}
This and multilinearity of the determinant imply that all coefficients of $$f^h(t,x_{12}-y_{12},\ldots,x_{45}-y_{45}),$$ considered as a polynomial in $t$, belong to $L$ for all $y_{ij}\in\R$. Thus, by Theorem~\ref{thm:claus},
we have that $\psi\big(\textrm{P}_{\Gr(5)}\big)$, and hence $\textrm{P}_{\Gr(5)}$, are not spectrahedral shadows.
\end{proof}

Although our geometric applications only require the following result for 
$\Gr(n)$, $n\geq5$, 
for the sake of completeness, we state and prove it in the more general case of the Grassmannian $\textnormal{Gr}_k(n)$ of $k$-planes in $n$-dimensional space.

\begin{corollary}\label{cor:notspec}
The closed convex cone $\textnormal{P}_{\textnormal{Gr}_k(n)}$ is not a spectrahedral shadow for all $n\geq5$ and $2\leq k\leq n-2$.
\end{corollary}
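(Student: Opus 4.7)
The plan is to reduce the general case to the case $(n,k)=(5,2)$ handled in Proposition~\ref{prop:notspec}, by exhibiting a surjective linear map from $\textnormal{P}_{\textnormal{Gr}_k(n)}$ onto $\textnormal{P}_{\textnormal{Gr}_2(5)}$ for every $n\geq5$ and $2\leq k\leq n-2$. Since spectrahedral shadows are preserved under linear images (Remarks~\ref{rem:specrem}~(iii)), this suffices: if $\textnormal{P}_{\textnormal{Gr}_k(n)}$ were a spectrahedral shadow then so would $\textnormal{P}_{\textnormal{Gr}_2(5)}$, contradicting Proposition~\ref{prop:notspec}.

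First I would invoke the duality $\textnormal{Gr}_k(n)\cong\textnormal{Gr}_{n-k}(n)$ arising from contraction against a fixed real volume form: this is a real linear isomorphism of Pl\"ucker ambient spaces that identifies $\textnormal{P}_{\textnormal{Gr}_k(n)}$ with $\textnormal{P}_{\textnormal{Gr}_{n-k}(n)}$, so the case $k=n-2$ is reduced to $k=2$ and I may henceforth assume $2\leq k\leq n-3$. Fixing $U=\operatorname{span}(e_1,\ldots,e_{k-2})$ and $W=\operatorname{span}(e_{k-1},\ldots,e_{k+3})$, which are disjoint subspaces of $\C^n$ since $(k-2)+5\leq n$, I consider the morphism $\iota\colon\textnormal{Gr}_2(W)\hookrightarrow\textnormal{Gr}_k(n)$, $V\mapsto U\oplus V$. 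In Pl\"ucker coordinates, $\iota^*(x_I)=y_{pq}$ when $I=\{1,\ldots,k-2\}\cup\{p+k-2,q+k-2\}$ with $1\leq p<q\leq5$, and $\iota^*(x_I)=0$ otherwise.

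Next I would define a linear section $s\colon \R[\textnormal{Gr}_2(W)]_2\to\R[\textnormal{Gr}_k(n)]_2$ by the rule
$y_{pq}\,y_{rs}\mapsto x_{\{1,\ldots,k-2\}\cup\{p+k-2,q+k-2\}}\,x_{\{1,\ldots,k-2\}\cup\{r+k-2,s+k-2\}}$,
and verify that it is well defined modulo Pl\"ucker relations: each three-term relation $y_{ab}y_{cd}-y_{ac}y_{bd}+y_{ad}y_{bc}=0$ in $\R[\textnormal{Gr}_2(W)]$ maps to a three-term Pl\"ucker relation of $\textnormal{Gr}_k(n)$ sharing the common $(k-2)$-subset $\{1,\ldots,k-2\}$, which in turn follows from the standard Pl\"ucker relations of $\R[\textnormal{Gr}_k(n)]$ (apply the Pl\"ucker relation with $(k-1)$-set $P=\{1,\ldots,k-2,a\}$ and $(k+1)$-set $Q=\{1,\ldots,k-2,b,c,d\}$, noting that the terms involving a repeated index vanish by antisymmetry). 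By construction $\iota^*\circ s=\operatorname{id}$.

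The main obstacle is to show that $s$ sends $\textnormal{P}_{\textnormal{Gr}_2(W)}$ into $\textnormal{P}_{\textnormal{Gr}_k(n)}$. I would argue that for every real point $L\in\textnormal{Gr}_k(n)(\R)$, the ten scalars $z_{pq}(L):=x_{\{1,\ldots,k-2\}\cup\{p+k-2,q+k-2\}}(L)$, $1\leq p<q\leq5$, satisfy all Pl\"ucker relations of $\textnormal{Gr}_2(5)$, as an immediate consequence of the three-term Pl\"ucker relations of $\textnormal{Gr}_k(n)$ just observed. Since these relations cut out $\textnormal{Gr}_2(5)$ set-theoretically, $(z_{pq}(L))$ either vanishes or is the Pl\"ucker vector of a real $2$-plane in $W$; in either case $s(g)(L)=g\bigl((z_{pq}(L))\bigr)\geq0$ for every $g\in\textnormal{P}_{\textnormal{Gr}_2(W)}$. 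Combined with $\iota^*\circ s=\operatorname{id}$, this yields surjectivity of $\iota^*\colon\textnormal{P}_{\textnormal{Gr}_k(n)}\twoheadrightarrow\textnormal{P}_{\textnormal{Gr}_2(5)}$ and completes the reduction.
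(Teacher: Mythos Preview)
Your argument is correct and takes a genuinely different route from the paper's. The paper proceeds by induction on $n$: using the coordinate projection $\pi\colon\C^{n+1}\to\C^n$, it identifies $\R[\textnormal{Gr}_k(n)]_2$ with a linear subspace of $\R[\textnormal{Gr}_k(n+1)]_2$ and shows $\textnormal{P}_{\textnormal{Gr}_k(n)}=\textnormal{P}_{\textnormal{Gr}_k(n+1)}\cap\R[\textnormal{Gr}_k(n)]_2$, then invokes closure of spectrahedral shadows under intersection with linear subspaces. You instead jump directly from $(n,k)$ to $(5,2)$ via the sub-Grassmannian embedding $\iota\colon V\mapsto U\oplus V$, and use closure of spectrahedral shadows under linear \emph{images} rather than linear slices. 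The paper's approach is lighter on verification: surjectivity of $\wedge^k\pi$ on the affine cones is immediate and no Pl\"ucker-relation bookkeeping is needed. Your approach is non-inductive and more explicit, but the price is the computation that the section $s$ lands in $\textnormal{P}_{\textnormal{Gr}_k(n)}$; the key (and not entirely obvious) fact you establish---that for any real $k$-plane $L$ the ten minors $z_{pq}(L)$ themselves satisfy the Pl\"ucker relations of $\textnormal{Gr}_2(5)$---is what makes this work. One small point you leave implicit: to conclude $\iota^*(\textnormal{P}_{\textnormal{Gr}_k(n)})=\textnormal{P}_{\textnormal{Gr}_2(5)}$ you also need the forward containment $\iota^*(\textnormal{P}_{\textnormal{Gr}_k(n)})\subset\textnormal{P}_{\textnormal{Gr}_2(5)}$, which follows since $\iota$ carries real points to real points and restriction preserves nonnegativity.
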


\begin{proof}
We proceed by induction on $n\geq5$. For $n=5$, the conclusion holds by Proposition~\ref{prop:notspec}, since  $\textnormal{Gr}_2(5)\cong \textnormal{Gr}_{3}(5)$ are naturally isomorphic.

For the induction step, fix $n\geq5$ and suppose $\textnormal{P}_{\textnormal{Gr}_{k}(n)}$ is not a spectrahedral shadow for all $2\leq k\leq n-2$. 
Since $\textnormal{Gr}_k(n+1)\cong \textnormal{Gr}_{n-k}(n+1)$ are naturally isomorphic, it suffices to show that $\textnormal{P}_{\textnormal{Gr}_{k}(n+1)}$ is not a spectrahedral shadow for all $2\leq k\leq \tfrac{n+1}{2}$. Note that every such $k$ satisfies $2\leq k\leq n-2$ because $n\geq5$.

The cone on the Grassmannian of $k$-planes in $\C^n$,
\begin{equation}
C\textnormal{Gr}_k(n)=\big\{v_1\wedge\dots\wedge v_k : v_i\in\C^n\big\}\subset \wedge^k\C^n,
\end{equation}
is an affine variety whose (affine) coordinate ring agrees with the homogeneous coordinate ring of $\textnormal{Gr}_k(n)$.
The projection map $\pi\colon\C^{n+1}\to\C^n$ onto the first $n$ coordinates induces a linear map $\wedge^k\pi\colon \wedge^k\C^{n+1}\to\wedge^k\C^n$, which maps $C\textnormal{Gr}_k(n+1)$ onto $C\textnormal{Gr}_k(n)$. In particular, the associated homomorphism of coordinate rings given by composition with $\wedge^k\pi$ is injective. This implies that $\R[\textnormal{Gr}_k(n)]_2$ can be identified with a linear subspace of $\R[\textnormal{Gr}_k(n+1)]_2$.
Note that $\textnormal{P}_{\textnormal{Gr}_{k}(n)}\subset \R[\textnormal{Gr}_k(n)]_2$ consists of the elements which are nonnegative on the real points $C\textnormal{Gr}_k(n)(\R)$, and similarly for $\textnormal{P}_{\textnormal{Gr}_{k}(n+1)}\subset \R[\textnormal{Gr}_k(n+1)]_2$.
Therefore,
\begin{equation}\label{eq:linearslice}
\textnormal{P}_{\textnormal{Gr}_{k}(n)}=\textnormal{P}_{\textnormal{Gr}_{k}(n+1)}\cap \R[\textnormal{Gr}_k(n)]_2
\end{equation}
because $\wedge^k\pi$ maps $C\textnormal{Gr}_k(n+1)(\R)$ onto $C\textnormal{Gr}_k(n)(\R)$ and pull-back of nonnegative functions are nonnegative.
Since the intersection of a spectrahedral shadow with a linear subspace is also a spectrahedral shadow, it follows from the induction hypothesis and \eqref{eq:linearslice} that $\textnormal{P}_{\textnormal{Gr}_{k}(n+1)}$ is also not a spectrahedral shadow.
\end{proof}

\begin{proof}[Proof of Theorem~\ref{mainthm:spectrahedra} (1)]
We have that $\mathfrak R_{\sec\geq0}(n)=\textrm{P}_{\Gr(n)}$ by \eqref{eq:pgrr}, and this is not a spectrahedral shadow by Corollary~\ref{cor:notspec}.
\end{proof}

\begin{remark}
 Both the cones from Corollary~\ref{cor:notspec} and the counterexamples to the Helton--Nie conjecture by Scheiderer are dual cones to the convex hull of a highest weight orbit of some $\SO(n)$-representation, i.e., they are the dual cones to \emph{orbitopes} \cite{orbitopes}, see also \cite{babl}. For Scheiderer's examples these representations are $\Sym^{2d}(\R^n)$, $d\geq2$, $n\geq3$ and $(d,n)\neq(2,3)$, whereas here it is $\Sym^2(\wedge^k\R^n)$ for $n\geq5$ and $2\leq k\leq n-2$. Note that on the other hand for the representations $\Sym^{2}(\R^n)$, $\Sym^{2d}(\R^2)$ and $\wedge^2\R^n$ these cones are even spectrahedra \cite{orbitopes}.
\end{remark}

\section{Relaxations and Algorithms}
\label{sec:relaxalgo}

We now construct \emph{relaxations} of $\mathfrak R_{\sec\geq0}(n)$, that is, inner and outer approximations, proving Theorem~\ref{mainthm:approx}. These relaxations are then combined with \emph{semidefinite programming} to establish simple algorithms to test whether a given curvature operator $R\in\Sym^2_b(\wedge^2\R^n)$ has $\sec_R\geq0$, or any other sectional curvature bound, after a simple modification (Remark~\ref{rem:obvious}), see Algorithm~\ref{alg:relax}. 
For $n=4$, we exploit the fact that $\mathfrak R_{\sec\geq0}(4)$ is a spectrahedral shadow 
to construct a more efficient algorithm based instead on \emph{Sturm's real root counting}, see Algorithms~\ref{alg:poscurv} and \ref{alg:nonnegcurv}.

\subsection{Inner relaxations}
Ideas similar to the Lasserre hierarchy~\cite{lasserre} can be used to produce inner approximations of $\mathfrak R_{\sec\geq0}(n)$ as follows.

\begin{definition}
Given $n\geq2$ and a nonnegative integer $m\geq0$, consider the linear map 
\begin{equation*}
\varphi_m\colon \R[\Gr(n)]_2\longrightarrow \R[\Gr(n)]_{2m+2},\quad \varphi_m(P)= r^m\cdot P,
\end{equation*}
where $r=\sum_{i<j} x_{ij}^2\in\R[\Gr(n)]_{2}$ is the sum of squares of Pl\"ucker coordinates. Let $\mathfrak{I}_m$ be the preimage of the subset of sums of squares in $\R[\Gr(n)]_{2m+2}$ under $\varphi_m$.
\end{definition}

Clearly, every element of $\mathfrak{I}_m$ is a curvature operator $R$ with $\sec_R\geq0$. The next result shows that, conversely, every curvature operator with $\sec_R>0$ is in some $\mathfrak{I}_m$. Observe that $\mathfrak{I}_0$ is precisely the set of curvature operators with strongly nonnegative curvature, see Example~\ref{ex:grasspsdsos}.

\begin{proposition}\label{prop:innerap}
For each $n\geq2$, the collection $\mathfrak{I}_m$, $m\geq0$, is a nested sequence of $\O(n)$-invariant spectrahedral shadows such that 
\begin{equation*}
\mathfrak{R}_{\sec>0}(n)\subset \bigcup_{m\geq0}\mathfrak{I}_m \subset \mathfrak{R}_{\sec\geq0}(n).
\end{equation*} 
In particular, we have $\overline{\bigcup_{m\geq0}\mathfrak{I}_m} = \mathfrak{R}_{\sec\geq0}(n)$.
\end{proposition}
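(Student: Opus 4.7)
The plan is to verify four things: (a) each $\mathfrak I_m$ is a spectrahedral shadow, (b) each $\mathfrak I_m$ is $\O(n)$-invariant, (c) the sequence is nested and contained in $\mathfrak R_{\sec\geq0}(n)$, and (d) it exhausts $\mathfrak R_{\sec>0}(n)$. The closure statement at the end follows for free from Remark~\ref{rem:obvious}(i), since $\mathfrak R_{\sec\geq0}(n)=\overline{\mathfrak R_{\sec>0}(n)}\subset\overline{\bigcup_m \mathfrak I_m}\subset\mathfrak R_{\sec\geq0}(n)$.

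For (a), the cone of sums of squares in $\R[\Gr(n)]_{2m+2}$ is a spectrahedral shadow, since it is the image of the positive-semidefinite cone in $\Sym^2(\R[\Gr(n)]_{m+1})$ under the linear map $A\mapsto \mathbf x^{\mathrm t} A \mathbf x$, where $\mathbf x$ is a vector of basis elements of $\R[\Gr(n)]_{m+1}$. Since $\varphi_m$ is linear and the class of spectrahedral shadows is closed under linear preimages (cf.\ Remarks~\ref{rem:specrem}), the preimage $\mathfrak I_m=\varphi_m^{-1}(\Sigma)$ is a spectrahedral shadow. For (b), the Pl\"ucker coordinates transform as $\wedge^2\R^n$ under $\O(n)$, and $r=\sum_{i<j}x_{ij}^2$ is the induced $\O(n)$-invariant quadratic form; hence $\varphi_m$ is $\O(n)$-equivariant, and since the sum-of-squares cone is evidently $\O(n)$-invariant, so is $\mathfrak I_m$.

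For (c), I first observe $r$ is itself a sum of squares in $\R[\Gr(n)]_2$. Thus if $P\in\mathfrak I_m$, writing $r^m\cdot P=\sum_k g_k^2$, we get $r^{m+1}\cdot P=\sum_{i<j}\sum_k(x_{ij}g_k)^2$, so $P\in\mathfrak I_{m+1}$. For the inclusion in $\mathfrak R_{\sec\geq 0}(n)$, if $P\in\mathfrak I_m$ then $r^m P\geq 0$ on $\Gr(n)(\R)$; but every nonzero real representative $\alpha\in\wedge^2\R^n$ of a real point of $\Gr(n)$ has $r(\alpha)=\|\alpha\|^2>0$, so $P\geq 0$ on $\Gr(n)(\R)$, i.e., $\sec_P\geq 0$.

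The main obstacle is (d), which is a Positivstellensatz-type statement: any $P\in\Sym^2_b(\wedge^2\R^n)$ with $\sec_P>0$ lies in some $\mathfrak I_m$. I will appeal to Scheiderer's projective Positivstellensatz (see, e.g., \cite[Cor.~3.12]{scheider1} or the discussion in \cite{claus}): if $X\subset\C P^N$ is a real projective variety whose real points $X(\R)$ are compact and Zariski dense in $X$, and $f\in\R[X]_{2d}$ satisfies $f>0$ on $X(\R)$, then there exists $m\geq 0$ such that $(x_0^2+\cdots+x_N^2)^m f$ is a sum of squares in $\R[X]$. Applied to $X=\Gr(n)$, whose real points are compact and Zariski dense, and to $f=P$ with $\sec_P>0$, this yields an $m$ with $r^m P\in\Sigma_{\Gr(n)}\cap\R[\Gr(n)]_{2m+2}$, i.e., $P\in\mathfrak I_m$. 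This gives $\mathfrak R_{\sec>0}(n)\subset\bigcup_m\mathfrak I_m$, completing the proof.
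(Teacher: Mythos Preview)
Your proof is correct and takes essentially the same approach as the paper. The only discrepancy is bibliographic: for the Positivstellensatz in step~(d), the paper invokes \cite[Cor.~4.2]{positivstellensatz} (phrased in terms of sections of the line bundle $\mathcal L$ pulled back along the Pl\"ucker embedding), not \cite{scheider1} or \cite{claus}, so you should adjust your citation accordingly.
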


\begin{proof}
First, observe that the subset of $\R[\Gr(n)]_{2m+2}$ consisting of sums of squares is a spectrahedral shadow: it is the image of the cone of positive-semidefinite matrices under the linear map sending a symmetric matrix $A$ to
\begin{equation*}
[x]_{m+1}^{\mathrm t} A [x]_{m+1}\in\R[\Gr(n)]_{2m+2},
\end{equation*}
where $[x]_{m+1}$ denotes the column vector of all monomials of degree $m+1$.
Thus, its preimage  $\mathfrak{I}_m$ under the linear map $\varphi_m$ is a spectrahedral shadow.
Furthermore, it is $\O(n)$-invariant because $r$ is fixed by the $\O(n)$-action.
Since the product of two sums of squares is again a sum of squares, the sequence $\mathfrak{I}_m$ is nested.

Let $P\in\R[\Gr(n)]_2$ be a quadratic form that is positive on every real point of $\Gr(n)$, i.e., an element of $\mathfrak{R}_{\sec>0}(n)$. We claim that $r^m\cdot P$ is a sum of squares of elements from $\R[\Gr(n)]_{m+1}$ for all sufficiently large $m$, i.e., $P\in\mathfrak{I}_m$. This 
follows from an appropriate \emph{Positivstellensatz}, namely \cite[Cor.~4.2]{positivstellensatz} applied to the pull-back $\mathcal{L}$ of the dual of the tautological line bundle on projective space via the Pl\"ucker embedding. Note that $\R[\Gr(n)]_{k}$ is the space of global sections of $\mathcal{L}^{\otimes k}$. The last statement then follows from Remark \ref{rem:obvious} (i).
\end{proof}

\begin{example}\label{ex:zoltek}
The curvature operator $R_{\rm Zol}$ of Zoltek \cite{Zoltek79}  
in Example~\ref{ex:grasspsdsos}
has $\sec\geq0$ but does not have strongly nonnegative curvature, i.e., lies in $\mathfrak{R}_{\sec\geq0}(5)\setminus\mathfrak{I}_0$.
It can be checked that $R\in\mathfrak{I}_1$, using the package {\tt SOS}~\cite{SOSSource} for the computer algebra system {\tt Macaulay2}~\cite{M2}.
\end{example}

\begin{remark}
For general $n$, both inclusions in Proposition~\ref{prop:innerap} are \emph{strict}. For example, since $\mathfrak{I}_0=\mathfrak{R}_{\sec\geq0}(4)$, one has $\mathfrak{R}_{\sec>0}(4)\subsetneq \bigcup_{m\geq0}\mathfrak{I}_m$ in this case. On the other hand, if we interpret $R_{\rm Zol}$ from Example~\ref{ex:grasspsdsos} as an element of $\R[\Gr(7)]_2$ instead of $\R[\Gr(5)]_2$, then $R_{\rm Zol}$ is not contained in any $\mathfrak{I}_m$, and thus $\bigcup_{m\geq0}\mathfrak{I}_m\subsetneq\mathfrak{R}_{\sec\geq0}(7).$ This follows from the fact that $R_{\rm Zol}$ has a \emph{bad point} in the sense of \cite{badpoints} at the point of $\Gr(7)$ corresponding to the $2$-plane spanned by $e_6$ and $e_7$. 
\end{remark}

\begin{remark}\label{rem:preserved}
The curvature conditions corresponding to $\mathfrak I_m$, $m\geq0$, are \emph{preserved under Riemannian submersions}. More precisely, if $\pi\colon (\overline M,\overline \g)\to (M,\g)$ is a Riemannian submersion and $R_{\overline M}\in\mathfrak I_m$, then also $R_{M}\in\mathfrak I_m$. This is a direct consequence of the Gray--O'Neill formula as presented in \cite[Thm.~B]{strongpos}, which states:
\begin{equation*}
R_M=(R_{\overline M})|_{\wedge^2 TM}+3\alpha-3\mathfrak b(\alpha),
\end{equation*}
where $TM\subset T\overline M$ is the horizontal space, $\alpha$ is a quadratic form defined by the Gray--O'Neill $A$-tensor, and $\mathfrak b\colon\Sym^2(\wedge^2 TM)\to\wedge^4 TM$ is the orthogonal projection. Thus, if $\varphi_m(R_{\overline M})\in\R[\Gr(\overline n)]_{2m+2}$ is a sum of squares, then so is $\varphi_m(R_M)\in\R[\Gr(n)]_{2m+2}$, since 
$\varphi_m(3\alpha)$ is also a sum of squares and $3\mathfrak b(\alpha)$ is in the vanishing ideal of $\Gr(n)$.
\end{remark}

\subsection{Outer relaxations}
We now observe that outer approximations of $\mathfrak R_{\sec\geq0}(n)$ by spectrahedra can be constructed using \cite[Thm.~A]{weitzenbock}.
As usual, we identify traceless symmetric $p$-tensors $\psi\in\Sym^p_0\R^n$ with harmonic homogeneous polynomials $\psi\in\R[x_1,\dots,x_n]_p\cap\ker\Delta$. The curvature term induced by $R\in\Sym^2_b(\wedge^2\R^n)$ in the Weitzenb\"ock formula for traceless symmetric $p$-tensors is the
symmetric linear endomorphism $\mathcal K\big(R,\Sym^p_0\R^n\big)\colon\Sym^p_0\R^n\to\Sym^p_0\R^n$
determined by
\begin{equation}\label{eq:integralformula}
\big\langle\mathcal K\big(R,\Sym^p_0\R^n\big)\psi,\psi\big\rangle=c_{p,n}\int_{S^{n-1}} \big\langle R\big(x\wedge\nabla\psi(x)\big), x\wedge\nabla\psi(x)\big\rangle\,\dd x,
\end{equation}
where $c_{p,n}>0$ is a constant, see \cite[Prop.~3.1]{weitzenbock}.

\begin{definition}
Given $n\geq2$ and a nonnegative integer $m\geq0$, let 
\begin{equation*}
\mathfrak O_m=\big\{R\in\Sym^2_b(\wedge^2\R^n) : \mathcal K\big(R,\Sym^p_0\R^n\big)\succeq0 \text{ for all } 1\leq p\leq m+1 \big\}.
\end{equation*}
\end{definition}

\begin{proposition}\label{prop:weitz}
For each $n\geq2$, the collection $\mathfrak O_m$, $m\geq0$, is a nested sequence of $\O(n)$-invariant spectrahedra such that 
\begin{equation*}
\bigcap_{m\geq0}\mathfrak O_m=\mathfrak R_{\sec\geq0}(n).
\end{equation*}
\end{proposition}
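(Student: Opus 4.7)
The plan is to verify the four properties of $\mathfrak{O}_m$ in sequence, with the characterization of the intersection being the essential content and the only step requiring substantive input from outside.

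First, I would observe that the integral formula \eqref{eq:integralformula} makes it transparent that $R \mapsto \mathcal{K}(R,\Sym^p_0\R^n)$ is linear in $R$. Consequently, for each fixed $p$, the condition $\mathcal{K}(R,\Sym^p_0\R^n) \succeq 0$ cuts out a spectrahedron in $\Sym^2_b(\wedge^2\R^n)$ (the preimage of the PSD cone under a linear map), and $\mathfrak{O}_m$ is the intersection of finitely many such spectrahedra, hence itself a spectrahedron. The $\O(n)$-invariance follows from the evident equivariance of \eqref{eq:integralformula}: using $\O(n)$-invariance of the measure $\dd x$ on $S^{n-1}$ and the fact that $x \wedge \nabla\psi$ transforms equivariantly, the substitution $x \mapsto g(x)$ for $g \in \O(n)$ shows that the natural $\O(n)$-actions intertwine $\mathcal{K}(R,\Sym^p_0\R^n)$ and $\mathcal{K}(g \cdot R,\Sym^p_0\R^n)$, so positive-semidefiniteness is preserved. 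Nestedness $\mathfrak{O}_{m+1} \subseteq \mathfrak{O}_m$ is immediate from the definition.

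Next, the inclusion $\mathfrak{R}_{\sec\geq 0}(n) \subseteq \bigcap_{m \geq 0}\mathfrak{O}_m$ follows at once from \eqref{eq:integralformula}: if $\sec_R \geq 0$, then the integrand $\langle R(x \wedge \nabla\psi(x)), x \wedge \nabla\psi(x)\rangle$ equals $\|x \wedge \nabla\psi(x)\|^2 \sec_R(x \wedge \nabla\psi(x))$ wherever the wedge is nonzero, hence is pointwise nonnegative on $S^{n-1}$. Therefore $\mathcal{K}(R,\Sym^p_0\R^n) \succeq 0$ for every $p \geq 1$, so $R$ lies in every $\mathfrak{O}_m$.

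The reverse inclusion $\bigcap_{m \geq 0}\mathfrak{O}_m \subseteq \mathfrak{R}_{\sec\geq 0}(n)$ is the substantive assertion and is where I expect the main obstacle to lie; it would be invoked directly from \cite[Thm.~A]{weitzenbock}. The natural strategy to reprove it is to probe the sectional curvature at a fixed oriented $2$-plane $\sigma \in \Gr^+(\R^n)$ by a sequence of harmonic homogeneous polynomials $\psi_p$ of degree $p \to \infty$, chosen so that $x \wedge \nabla\psi_p(x)$ concentrates in a measure-theoretic sense on representatives of $\sigma$ in $S^{n-1}$. For instance, if $\sigma = \operatorname{span}(e_1,e_2)$, one may take $\psi_p(x) = \operatorname{Re}\!\big((x_1 + \ii\, x_2)^p\big)$, which is harmonic of degree $p$ and whose modulus $|x_1 + \ii\, x_2|^p = (x_1^2 + x_2^2)^{p/2}$ concentrates sharply on the great circle $S^{n-1} \cap \sigma$ as $p \to \infty$. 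A suitable normalization of \eqref{eq:integralformula} would then extract $\sec_R(\sigma)$ in the limit, forcing $\sec_R(\sigma) \geq 0$ from positive-semidefiniteness of all $\mathcal{K}(R,\Sym^p_0\R^n)$. The analytic core, namely establishing the concentration quantitatively and tracking the constants $c_{p,n}$, is precisely what \cite[Thm.~A]{weitzenbock} supplies, which lets the argument be completed by citation.
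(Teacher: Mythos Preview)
Your proposal is correct and follows essentially the same approach as the paper: linearity and $\O(n)$-equivariance of $\mathcal K(R,\Sym^p_0\R^n)$ give that $\mathfrak O_m$ is an $\O(n)$-invariant spectrahedron, the forward inclusion comes from the integrand in \eqref{eq:integralformula} being a (nonnegative multiple of a) sectional curvature, and the reverse inclusion is obtained by citing \cite[Thm.~A]{weitzenbock}. Your added sketch of the concentration argument behind that citation is a nice elaboration but not part of the paper's proof.
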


\begin{proof}
Since $\mathcal K\big(R,\Sym^p_0\R^n\big)$ depends linearly on $R$ and is $\O(n)$-equivariant,
 $\mathfrak O_m$ are (finite) intersections of $\O(n)$-invariant spectrahedra, hence $\O(n)$-invariant spectrahedra themselves.
The inclusion $\mathfrak R_{\sec\geq0}(n)\subset\bigcap_{m\geq0}\mathfrak O_m$ holds since
the integrand in \eqref{eq:integralformula} is a sectional curvature;  the reverse inclusion follows by \cite[Thm.~A]{weitzenbock}.
\end{proof}

\begin{remark}
Note that the first step $\mathfrak O_0$ is precisely the set of curvature operators with nonnegative Ricci curvature~\cite[Ex.~2.2]{weitzenbock}. Thus, in contrast with the inner approximations (see Remark~\ref{rem:preserved}),
these curvature conditions are in general \emph{not preserved} under Riemannian submersions, see e.g.~\cite{pro-wilhelm}.
\end{remark}

\begin{proof}[Proof of Theorem~\ref{mainthm:approx}]
Without loss of generality, we may consider only the case $k=0$, see Remark~\ref{rem:obvious}.
The result now follows from Propositions~\ref{prop:innerap} and \ref{prop:weitz}.
\end{proof}

\subsection{\texorpdfstring{Algorithms to detect $\sec\geq0$}{Algorithms to detect nonnegative sectional curvature}}
The relaxations constructed above enable the use of \emph{semidefinite programming} to test membership in $\mathfrak R_{\sec\geq0}(n)$.
A \emph{semidefinite program} optimizes a linear functional over a spectrahedral shadow $S$; in particular, it can be used to test whether $S$ is empty, and whether a given point belongs to $S$. Using interior-point methods, one can solve a semidefinite program up to a fixed precision in polynomial time in the size of the program description, see e.g.~\cite{nesterov}.

\begin{algorithm}[ht]
\caption{Query $\sec_R\geq0$ via iterated semidefinite programs, for $n\geq5$}  
\label{alg:relax}
\SetAlgoLined
\DontPrintSemicolon

\SetKwInOut{Input}{input}\SetKwInOut{Output}{output}
\SetKw{Def}{def}
\SetKw{Var}{var}

\Input{$R\in\Sym^2_b(\wedge^2\R^n)$}
\Output{TRUE if $\sec_R\geq0$, FALSE otherwise}
\Var $m:=0$\;
\Var finished := FALSE\;

\While{\rm finished = FALSE}{
\If{$R\in\mathfrak{I}_m$ \tcc{Semidefinite Programming used here} }{finished := TRUE\; \KwRet{\rm TRUE}}
\If{$R\not\in\mathfrak{O}_m$ \tcc{Semidefinite Programming used here} }{finished := TRUE\; \KwRet{\rm FALSE}}
$m := m+1$\;
}\end{algorithm}

\begin{proposition}
For all $R\in\Sym^2_b(\wedge^2\R^n)\setminus\mathcal B$, where $\mathcal B:=\mathfrak{R}_{\sec\geq0}(n)\setminus\bigcup_{m\geq0}\mathfrak{I}_m$, Algorithm \ref{alg:relax} terminates and returns {\rm TRUE} if and only if $\sec_R\geq0$. The set of bad inputs $\mathcal B$ has measure zero in $\Sym^2_b(\wedge^2\R^n)$ and is contained in $\partial\mathfrak{R}_{\sec\geq0}(n)$.
\end{proposition}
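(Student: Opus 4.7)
The plan is to verify the three claims by tracing the flow of the algorithm through the nested inclusions $\bigcup_m \mathfrak{I}_m \subset \mathfrak{R}_{\sec\geq 0}(n) = \bigcap_m \mathfrak{O}_m$ established in Propositions~\ref{prop:innerap} and \ref{prop:weitz}.

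First, I would analyze termination and correctness simultaneously. Given $R$, the loop exits after finitely many iterations if and only if there exists some $m\geq 0$ with either $R\in\mathfrak{I}_m$ or $R\notin\mathfrak{O}_m$. Suppose the algorithm returns TRUE at step $m$. Then $R\in\mathfrak{I}_m\subset\mathfrak{R}_{\sec\geq 0}(n)$, so $\sec_R\geq 0$. Suppose instead it returns FALSE at step $m$. Then $R\notin\mathfrak{O}_m$, hence $R\notin\bigcap_{m\geq 0}\mathfrak{O}_m=\mathfrak{R}_{\sec\geq 0}(n)$, so $\sec_R\not\geq 0$. Conversely, if $\sec_R\not\geq 0$, then $R\notin\mathfrak{R}_{\sec\geq 0}(n)$ and again by Proposition~\ref{prop:weitz} some $\mathfrak{O}_m$ fails to contain $R$, so termination occurs with output FALSE. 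If $\sec_R\geq 0$ and $R\in\bigcup_{m\geq 0}\mathfrak{I}_m$, termination occurs at the first such $m$ with output TRUE. The only remaining case is $R\in\mathfrak{R}_{\sec\geq 0}(n)\setminus\bigcup_{m\geq 0}\mathfrak{I}_m=\mathcal{B}$; here $R\in\mathfrak{O}_m$ for all $m$ and $R\notin\mathfrak{I}_m$ for all $m$, so neither branch ever fires and the algorithm does not halt. This establishes the first two assertions for inputs outside $\mathcal{B}$.

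Next, I would address the location and size of $\mathcal{B}$. By Proposition~\ref{prop:innerap}, $\mathfrak{R}_{\sec>0}(n)\subset\bigcup_{m\geq 0}\mathfrak{I}_m$, and by Remark~\ref{rem:obvious}(ii), $\mathfrak{R}_{\sec>0}(n)=\operatorname{int}\mathfrak{R}_{\sec\geq 0}(n)$. Therefore
\begin{equation*}
\mathcal{B}=\mathfrak{R}_{\sec\geq 0}(n)\setminus\bigcup_{m\geq 0}\mathfrak{I}_m\subset\mathfrak{R}_{\sec\geq 0}(n)\setminus\mathfrak{R}_{\sec>0}(n)=\partial\mathfrak{R}_{\sec\geq 0}(n).
\end{equation*}
Since $\mathfrak{R}_{\sec\geq 0}(n)\subset\Sym^2_b(\wedge^2\R^n)$ is a closed convex set with nonempty interior (it contains a neighborhood of $\id$), its topological boundary has Lebesgue measure zero, and therefore so does $\mathcal{B}$.

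I expect no substantial obstacle: the argument is essentially a bookkeeping exercise given Propositions~\ref{prop:innerap} and \ref{prop:weitz} and the elementary fact that the boundary of a convex body in a finite-dimensional vector space is negligible. The only subtle point is the semidefinite-programming subroutine, which is assumed to decide membership in $\mathfrak{I}_m$ and $\mathfrak{O}_m$ exactly; in a truly numerical implementation this introduces a separate precision issue that lies outside the scope of the stated proposition.
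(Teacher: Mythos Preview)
Your proposal is correct and follows essentially the same approach as the paper, which simply states that the claims follow immediately from Propositions~\ref{prop:innerap} and \ref{prop:weitz}. You have merely spelled out the details of that deduction, including the use of Remark~\ref{rem:obvious}(ii) and the convexity of $\mathfrak{R}_{\sec\geq0}(n)$ to conclude that $\mathcal{B}\subset\partial\mathfrak{R}_{\sec\geq0}(n)$ has measure zero.
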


\begin{proof}
These claims follow immediately from Propositions \ref{prop:innerap} and \ref{prop:weitz}.
\end{proof}

\subsection{\texorpdfstring{Efficient algorithms for $n=4$}{Efficient algorithms in dimension 4}}\label{subsec:algorithms}
Although semidefinite programming would not require iterations to detect membership in the spectrahedral shadow $\mathfrak R_{\sec\geq0}(4)$, more efficient algorithms follow from the Finsler--Thorpe trick (Proposition~\ref{prop:thorpe}).

Recall that the classical \emph{Sturm's algorithm} returns the number of real roots (disregarding multiplicities) of a given univariate real polynomial $p\in\R[x]$ in any interval $[a,b]$ with $-\infty\leq a<b\leq +\infty$ and $p(a)\neq0$ and $p(b)\neq0$, see e.g.~\cite[Cor.~1.2.10]{bookBCR}, or \cite[Sec.~2.2.2]{BPR} for a more algorithmic viewpoint.
This method produces  partitions $-\infty=a_1<a_2<\dots<a_N=+\infty$ which are \emph{root-isolating}, that is, $p(a_j)\neq0$ for all $j$ and $p(x)$ has exactly one root in $[a_j,a_{j+1}]$.
In what follows, we convention that $p(\pm\infty)$ are  interpreted as limits.
Combining this procedure with Euclid's division algorithm (to compute greatest common divisors of polynomials), one can produce a \emph{common root-isolating partition} for any finite collection of polynomials $p_i\in\R[x]$, i.e., 
$-\infty=a_1<a_2<\dots<a_N=+\infty$
such that $p_i(a_j)\neq0$ for all $i$ and $j$, and $[a_j,a_{j+1}]$ contains exactly one root of some $p_i(x)$. Note that if more than one $p_i(x)$ vanishes in $[a_j,a_{j+1}]$, then they must do so at the same point.

\begin{algorithm}[ht]
\caption{Query $\sec_R>0$ in dimension $n=4$}\label{alg:poscurv}
\SetAlgoLined
\DontPrintSemicolon

\SetKwInOut{Input}{input}\SetKwInOut{Output}{output}
\SetKw{Def}{def}

\Input{$R\in\Sym^2_b(\wedge^2\R^4)$}
\Output{TRUE if $\sec_R>0$, FALSE otherwise}
\Def $\sigma_i(x)\in\R[x]$, $1\leq i\leq 6$, such that
$\det(R+x\,*-\lambda\id)=\lambda^6+\displaystyle\sum_{i=1}^6 (-1)^i\sigma_i(x)\lambda^{6-i}$\; \tcc{i.e., $\sigma_i(x)$ are the elementary symmetric polynomials on eigenvalues of $R+x\,*$} 
\Def $-\infty=a_1<a_2<\dots<a_N=+\infty$ common root-isolating partition for $\sigma_i(x)\in\R[x]$, $1\leq i\leq 6$ \tcc{Sturm's Algorithm used here} 
\eIf{\rm $\exists j$ such that $\sigma_i(a_j)>0$ for all $1\leq i\leq 6$}{\KwRet{\rm TRUE}}{\KwRet{\rm FALSE}}
\end{algorithm}

\begin{proposition}\label{prop:algorithmsecpos}
For all $R\in\Sym^2_b(\wedge^2\R^4)$, Algorithm~\ref{alg:poscurv} terminates and returns {\rm TRUE} if and only if $\sec_R>0$.
\end{proposition}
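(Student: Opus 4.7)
The plan is to reduce the test $\sec_R>0$ to a finite sign check at the partition points $a_j$, via the Finsler--Thorpe trick and a straightforward consequence of the common root-isolating property of the partition.

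First, by the Finsler--Thorpe trick (Proposition~\ref{prop:thorpe}), $\sec_R>0$ holds if and only if there exists $x\in\R$ with $R+x\,*\succ0$. Since $R+x\,*$ is a real symmetric matrix, all of its eigenvalues are real; evaluating its characteristic polynomial at any $\lambda\leq 0$ and using that the coefficients of the characteristic polynomial have alternating signs $\big(1,-\sigma_1(x),\sigma_2(x),\dots,\sigma_6(x)\big)$, one sees at once that $R+x\,*\succ0$ is equivalent to $\sigma_i(x)>0$ for all $1\leq i\leq 6$. Setting $U:=\{x\in\R:\sigma_i(x)>0,\ 1\leq i\leq 6\}$, it thus suffices to prove that $U\neq\emptyset$ if and only if $a_j\in U$ for some partition point~$a_j$.

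The ``if'' direction is immediate. For the converse, I would pick any $x_0\in U$ and let $[a_j,a_{j+1}]$ denote the partition interval containing it; we may assume $x_0\in(a_j,a_{j+1})$, else we are done. By the common root-isolating property, every $\sigma_i$ that vanishes inside $(a_j,a_{j+1})$ does so at a single common point $r\in(a_j,a_{j+1})$, while the remaining $\sigma_i$ have no roots whatsoever in $[a_j,a_{j+1}]$. Hence $x_0$ lies in one of the open subintervals $(a_j,r)$ or $(r,a_{j+1})$ (or either, if no common root exists in $(a_j,a_{j+1})$); say the former. On $(a_j,r)$ none of the $\sigma_i$ has a root, so each keeps the sign of $\sigma_i(x_0)>0$ throughout. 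Combined with $\sigma_i(a_j)\neq0$ (which is built into the partition), continuity then forces $\sigma_i(a_j)>0$ for every $i$, i.e., $a_j\in U$. The case $x_0\in(r,a_{j+1})$ is analogous, with $a_{j+1}$ playing the role of $a_j$.

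The main subtle point is the sign-transfer argument in the previous paragraph: one must use \emph{both} the nonvanishing of the $\sigma_i$ at partition points \emph{and} the fact that all $\sigma_i$'s vanishing inside a given partition interval concentrate at a single common point $r$, in order to propagate simultaneous positivity of all six polynomials from an interior sample $x_0\in U$ to an endpoint of the enclosing interval. If different $\sigma_i$'s were allowed to vanish at distinct points inside the same partition interval, $U$ could contain an open subinterval entirely avoiding the endpoints, and the test would fail to detect it; this is precisely the reason Sturm's method is combined with Euclid's algorithm to produce a \emph{common} root-isolating partition.
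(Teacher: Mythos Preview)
Your argument is correct and follows essentially the same route as the paper's own proof: reduce $\sec_R>0$ to the existence of $x$ with all $\sigma_i(x)>0$ via Finsler--Thorpe, then propagate simultaneous positivity from an interior point $x_0$ to an endpoint of the enclosing partition interval using that all roots of the $\sigma_i$ inside $(a_j,a_{j+1})$ coincide. Your version is in fact slightly more explicit (you name the common root $r$ and explain why the common root-isolating property is essential), whereas the paper simply observes that one of $(a_j,x_0)$ or $(x_0,a_{j+1})$ is root-free for every $\sigma_i$.
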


\begin{proof}
By the Finsler--Thorpe trick (Proposition~\ref{prop:thorpe}), $\sec_R>0$ if and only if there exists $x\in\R$ such that $R+x\*\succ0$, that is, the elementary symmetric polynomials $\sigma_i(x)$, $1\leq i\leq6$, in the eigenvalues of $R+x\,*$ are all positive. 

Suppose the algorithm returns TRUE. Then there exists $1\leq j\leq N$ such that
$\sigma_i(a_j)>0$ for all $1\leq i\leq 6$, so $R+a_j\,*\succ0$ and hence $\sec_R>0$. Conversely, if $\sec_R>0$, let $x_0\in\R$ be such that $R+x_0\,*\succ0$. Set $1\leq j\leq N$ such that $x_0\in [a_j,a_{j+1}]$. If $x_0=a_j$ or $x_0=a_{j+1}$, then the algorithm clearly returns TRUE, so we may assume $x_0\in (a_j,a_{j+1})$. Since  $-\infty=a_1<a_2<\dots<a_N=+\infty$ is a common root-isolating partition, one of the intervals $(a_j,x_0)$ or $(x_0,a_{j+1})$ contains no roots of any $\sigma_i(x)$. Thus, either $\sigma_i(a_j)>0$ for all $1\leq i \leq 6$,
or $\sigma_i(a_{j+1})>0$ for all $1\leq i \leq 6$; which implies that the algorithm returns TRUE.
\end{proof}

\begin{algorithm}[ht]
\caption{Query $\sec_R\geq0$ in dimension $n=4$}\label{alg:nonnegcurv}
\SetAlgoLined
\DontPrintSemicolon

\SetKwInOut{Input}{input}\SetKwInOut{Output}{output}
\SetKw{Def}{def}
\SetKw{Var}{var}

\Input{$R\in\Sym^2_b(\wedge^2\R^4)$}
\Output{TRUE if $\sec_R\geq0$, FALSE otherwise}
\Def $\sigma_i(x)\in\R[x]$, $1\leq i\leq 6$, such that
$\det(R+x\,*-\lambda\id)=\lambda^6+\displaystyle\sum_{i=1}^6 (-1)^i\sigma_i(x)\lambda^{6-i}$\; \tcc{i.e., $\sigma_i(x)$ are the elementary symmetric polynomials on eigenvalues of $R+x\,*$} 
\Def $-\infty=a_1<a_2<\dots<a_N=+\infty$ common root-isolating partition for $\sigma_i(x)\in\R[x]$, $1\leq i\leq 6$ \tcc{Sturm's Algorithm used here} 
\Var answer := {\rm FALSE}\;
\For{$j=1,\dots, N-1$}{
\If{\rm $\forall 1\leq i\leq 6$ with $\sigma_i(a_j)<0$ and $\sigma_i(a_{j+1})<0$, $\sigma_i(x)$ has a root in $(a_j,a_{j+1})$\label{algline:iftest}}{answer := \rm TRUE}
}
\KwRet{\rm answer}
\end{algorithm}

\begin{proposition}
For all $R\in\Sym^2_b(\wedge^2\R^4)$, Algorithm~\ref{alg:nonnegcurv} terminates and returns {\rm TRUE} if and only if $\sec_R\geq0$.
\end{proposition}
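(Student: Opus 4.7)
My plan is to proceed in parallel to the proof of Proposition~\ref{prop:algorithmsecpos}, invoking the Finsler--Thorpe trick (Proposition~\ref{prop:thorpe}) to reduce $\sec_R\geq0$ to the existence of some $x\in\R$ with $R+x\,*\succeq0$, i.e., with $\sigma_i(x)\geq 0$ for all $1\leq i\leq 6$. Termination is immediate: Sturm's method halts and produces a finite common root-isolating partition, and the for-loop then performs finitely many sign tests (each reducible to further Sturm queries). The crucial structural input I will use repeatedly is the property of common root-isolating partitions highlighted right before Algorithm~\ref{alg:poscurv}: if more than one $\sigma_i$ vanishes inside an interval $(a_j,a_{j+1})$, they all vanish at the same point, which I denote $x^*$.

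For the direction \emph{Algorithm returns TRUE $\Rightarrow \sec_R\geq 0$}, I fix an index $j$ for which the test on line~\ref{algline:iftest} is passed, and produce a witness $x\in[a_j,a_{j+1}]$ with all $\sigma_i(x)\geq 0$. If no $\sigma_i$ has a root in $(a_j,a_{j+1})$, then each $\sigma_i$ has constant sign on this interval, and the line~\ref{algline:iftest} condition forces this sign to be positive for every $i$, so any interior $x$ works. If some $\sigma_i$ does vanish in $(a_j,a_{j+1})$, then by the common root-isolating property all such vanishings occur at a single point $x^*$; I will then argue by a short case analysis on the pair $(\operatorname{sign}\sigma_i(a_j),\operatorname{sign}\sigma_i(a_{j+1}))$, using line~\ref{algline:iftest} in the $(-,-)$ case and the intermediate value theorem together with root-isolation in the $(+,-)$ and $(-,+)$ cases, that $\sigma_i(x^*)\geq 0$ for every $i$.

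For the converse, I pick $x_0$ with $\sigma_i(x_0)\geq 0$ for all $i$ (from Finsler--Thorpe) and locate it in some $[a_j,a_{j+1}]$. If $x_0$ is an endpoint, root-isolation forces $\sigma_i(x_0)>0$ for all $i$, so no index satisfies the hypothesis of the line~\ref{algline:iftest} check and the condition is vacuously true; otherwise, for any $i$ with $\sigma_i(a_j)<0$ and $\sigma_i(a_{j+1})<0$, the absence of a root in $(a_j,a_{j+1})$ would force $\sigma_i<0$ throughout, contradicting $\sigma_i(x_0)\geq 0$, and so $\sigma_i$ must have a root there, verifying line~\ref{algline:iftest} at this~$j$.

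The most delicate point — and the reason a single endpoint evaluation is insufficient here while it sufficed in Algorithm~\ref{alg:poscurv} — is the both-endpoints-negative case: such a $\sigma_i$ can still attain zero inside $(a_j,a_{j+1})$, but only at an even-multiplicity root. Sturm's method counts this as a single distinct root, and the common root-isolating partition forces all such incidental zeros in a given interval to pile up at the same point $x^*$. This interplay is precisely what legitimizes simultaneously setting all troublesome $\sigma_i$ to zero at the same witness $x^*$, and is where I expect the argument to need to be written most carefully.
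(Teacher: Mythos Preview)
Your proposal is correct and follows essentially the same approach as the paper: reduce via Finsler--Thorpe to finding $x$ with all $\sigma_i(x)\geq 0$, then in the forward direction locate the unique common root $x^*\in(a_j,a_{j+1})$ and do the same sign case analysis on $(\operatorname{sign}\sigma_i(a_j),\operatorname{sign}\sigma_i(a_{j+1}))$, and in the converse direction show that a both-endpoints-negative $\sigma_i$ must vanish in the interval. Two minor remarks: your ``no $\sigma_i$ has a root in $(a_j,a_{j+1})$'' case is vacuous, since the paper's definition of common root-isolating partition requires each interval to contain \emph{exactly} one root of some $\sigma_i$; and you should explicitly record the easy $(+,+)$ case in your case analysis (the paper does), though it is of course trivial once root-isolation pins any root to $x^*$.
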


\begin{proof}
Analogously to Proposition~\ref{prop:algorithmsecpos}, $\sec_R\geq0$ if and only if there exists $x\in\R$ such that $\sigma_i(x)\geq0$ for all $1\leq i\leq 6$.

Suppose the algorithm returns TRUE, so there exists $1\leq j\leq N-1$ such that for all $1\leq i\leq 6$ with $\sigma_i(a_j)<0$ and $\sigma_i(a_{j+1})<0$, the polynomial $\sigma_i(x)$ has a root in $(a_j,a_{j+1})$. Let $x_0\in (a_j,a_{j+1})$ be the only root of some $\sigma_i(x)$ in that interval. We claim that $\sigma_i(x_0)\geq0$ for all $1\leq i\leq 6$, hence $\sec_R\geq0$. This is shown by examining the signs of $\sigma_i(a_j)$ and $\sigma_i(a_{j+1})$
and using that $-\infty=a_1<a_2<\dots<a_N=+\infty$ is a common root-isolating partition for the $\sigma_i(x)$, as follows:
\begin{enumerate}
\item If $\sigma_i(a_j)<0$ and $\sigma_i(a_{j+1})<0$, then $\sigma_i(x_0)=0$ by the test in line {\small\bf\ref{algline:iftest}};
\item If $\sigma_i(a_j)>0$ and $\sigma_i(a_{j+1})>0$, then clearly $\sigma_i(x_0)\geq0$;
\item If $\sigma_i(a_j)$ and $\sigma_i(a_{j+1})$ have opposite signs, then $\sigma_i(x_0)=0$ by the Intermediate Value Theorem.
\end{enumerate}

Conversely, if $\sec_R\geq0$, choose $x_0\in\R$ such that $\sigma_i(x_0)\geq0$ for all $1\leq i\leq 6$. Let $1\leq j\leq N-1$ be such that $x_0\in [a_j,a_{j+1}]$, and $1\leq i\leq 6$ be such that $\sigma_i(a_j)<0$ and $\sigma_i(a_{j+1})<0$.  If $\sigma_i(x_0)>0$, then $\sigma_i(x)$ would have more than one root in $(a_j,a_{j+1})$ contradicting the common root-isolating property, so $\sigma_i(x_0)=0$. Therefore, the algorithm returns TRUE.
\end{proof}

\appendix
\section{Curvature operators of Semi-Riemannian manifolds}
\label{sec:semiriem}

Recall that a \emph{semi-Riemannian} (or \emph{pseudo-Riemannian}) manifold $(M,\g)$ is a smooth manifold endowed with a
\emph{semi-Riemannian metric} $\g$, i.e., a smooth section of the bundle $\Sym^2(TM)$ of symmetric bilinear forms that is nondegenerate (but possibly indefinite). As in the rest of the paper, since all our considerations are pointwise, given $p\in M$ we shall identify $T_pM\cong\R^n$, and consider
\begin{equation*}
\g(X,Y)= \langle GX,Y \rangle = -\textstyle\sum\limits_{i=1}^\nu x_iy_i + \sum\limits_{j=\nu+1}^n x_jy_j,
\end{equation*}
where $\langle\cdot,\cdot\rangle$ is the standard inner product, $X=(x_1,\dots,x_n)$, $Y=(y_1,\dots,y_n)$, and
\begin{equation*}
G=\diag\!\big(\underbrace{-1,\dots,-1}_\nu,\underbrace{1,\dots,1}_{n-\nu}\big).
\end{equation*}
The integer $0\leq \nu\leq n$ is called the \emph{signature} of $\g$, and we henceforth assume $0<\nu<n$, i.e., $\pm\g$ are \emph{not} Riemannian.
Exactly as in the Riemannian case, semi-Riemannian metrics $\g$ determine a (unique) Levi--Civita connection $\nabla$ on $TM$, see e.g.~\cite[p.~11]{oneill-book}, and hence a curvature operator $R$, as in \eqref{eq:curvopmanifold}, that satisfies the first Bianchi identity \eqref{eq:bianchi}. However, these equations must be interpreted appropriately: the standard inner product $\langle\cdot,\cdot\rangle$ on $\wedge^2 T_pM\cong\wedge^2 \R^n$ has to be replaced by the nondegenerate (but indefinite) symmetric bilinear form induced by $\g$, namely 
\begin{equation*}
\begin{aligned}
Q(X\wedge Y,Z\wedge W) &= \langle (G\wedge G) (X\wedge Y),Z\wedge W\rangle\\
&=\g(X,Z)\g(Y,W)-\g(X,W)\g(Y,Z),
\end{aligned}
\end{equation*}
where $(G\wedge G)(X\wedge Y)=GX\wedge GY$.
Denote by $\Sym_Q^2(\wedge^2\R^n)$ the set of $Q$-symmetric $R\colon\wedge^2\R^n\to\wedge^2\R^n$, i.e., such that, for all $X\wedge Y,Z\wedge W\in\wedge^2\R^n$,
\begin{equation*}
Q(R(X\wedge Y),Z\wedge W)=Q(X\wedge Y,R(Z\wedge W)),
\end{equation*}
and similarly for $\Sym^2_{Q,b}(\wedge^2\R^n)$ and \eqref{eq:bianchi}.
Decomposable elements $X\wedge Y\in \wedge^2 \R^n$ are called \emph{nondegenerate} if the restriction of $\g$ to the $2$-plane spanned by $X$ and $Y$ is nondegenerate, and \emph{degenerate} otherwise. Note that $X\wedge Y$ is nondegenerate if and only if $Q(X\wedge Y,X\wedge Y)\neq 0$. Furthermore, $X\wedge Y$ is called \emph{definite} 
if $Q(X\wedge Y,X\wedge Y)>0$, and \emph{indefinite} 
if $Q(X\wedge Y,X\wedge Y)<0$; as these are equivalent to the restriction of $\g$ to $\operatorname{span}(X,Y)$ being definite, and indefinite, respectively.

The sectional curvature determined by $R\in\Sym_Q^2(\wedge^2\R^n)$ is 
\begin{equation*}
\sec_{Q,R}(X\wedge Y) = \frac{Q(R(X\wedge Y),X\wedge Y)}{Q(X\wedge Y,X\wedge Y)},
\end{equation*}
and is only defined for nondegenerate $X\wedge Y$, cf.~\eqref{eq:sec}. By a well-known result of Kulkarni, see \cite[p.~229]{oneill-book}, the only $R\in\Sym^2_{Q,b}(\wedge^2\R^n)$ with $\sec_{Q,R}(X\wedge Y)\geq 0$ for all nondegenerate $X\wedge Y$ is $R=0$. Furthermore, if the restriction of $\sec_{Q,R}$ to \emph{either} definite \emph{or} indefinite elements is bounded (from above and below), then $\sec_{Q,R}$ is constant everywhere. Thus, Definition~\ref{def:rseckn} becomes vastly uninteresting.

Nevertheless, a suitable generalization of Definition~\ref{def:rseckn} to the case of indefinite signature is given by the set $\mathfrak R_{\sec\geq k}(n,\nu)$ of $R\in\Sym^2_{Q,b}(\wedge^2\R^n)$ such that
\begin{equation}\label{eq:Rgeqk-indef}
Q(R(X\wedge Y),X\wedge Y) \geq k\, Q(X\wedge Y,X\wedge Y), \quad \text{ for all } X\wedge Y\in\wedge^2\R^n.
\end{equation}
Note that \eqref{eq:Rgeqk-indef} is equivalent to $\sec_{Q,R}\geq k$ on definite elements and $\sec_{Q,R}\leq k$ on indefinite elements. It is easy to see that $\mathfrak R_{\sec\geq k}(n,\nu)=\mathfrak R_{\sec\geq k}(n)$ if $\nu=0$ or $\nu=n$, and $\mathfrak R_{\sec\geq k}(n,n-\nu)=\mathfrak R_{\sec\geq k}(n,\nu)$.

Arguably, condition \eqref{eq:Rgeqk-indef} is not only \emph{algebraically,} but also \emph{geometrically} more natural than $\sec_{Q,R}\geq k$, since semi-Riemannian manifolds satisfy it pointwise if and only if they satisfy a local Alexandrov triangle comparison on the signed lengths of geodesics~\cite{alexander-bishop}.
This curvature condition was first considered in \cite{andersson-howard}, where comparison results for the Riccati equation are proven, and is also related to space-time convex functions~\cite{alex-karr}.
Our Theorem~\ref{mainthm:spectrahedra} carries over verbatim to this context:

\begin{mainthmp}\label{mainthm:spectrahedra2}
For all $k\in\R$ and $0\leq \nu\leq n$, the set $\mathfrak R_{\sec\geq k}(n,\nu)$ is:
\begin{enumerate}[\indent \rm (1)]
\item  not a spectrahedral shadow, if $n\geq5$;
\item  a spectrahedral shadow, but not a spectrahedron, if $n=4$;
\item  a spectrahedron, if $n\leq3$.
\end{enumerate}
\end{mainthmp}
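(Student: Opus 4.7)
The plan is to reduce Theorem~A' to Theorem~\ref{mainthm:spectrahedra} by exhibiting an explicit affine isomorphism of ambient real vector spaces
\begin{equation*}
\Phi\colon\Sym^2_{Q,b}(\wedge^2\R^n)\longrightarrow\Sym^2_b(\wedge^2\R^n)
\end{equation*}
that restricts to a bijection $\mathfrak{R}_{\sec\geq k}(n,\nu)\to\mathfrak{R}_{\sec\geq 0}(n)$. The boundary cases $\nu\in\{0,n\}$ are tautological by the remarks in the excerpt, so we focus on $0<\nu<n$. The key ingredient is the involution $G\wedge G\colon\wedge^2\R^n\to\wedge^2\R^n$ given by $(G\wedge G)(X\wedge Y)=GX\wedge GY$, which is diagonal in the basis $\{e_i\wedge e_j\}_{i<j}$ with eigenvalues $\epsilon_i\epsilon_j\in\{\pm 1\}$, squares to $\id$, and implements the change of inner product on $\wedge^2\R^n$ via the fundamental identity
\begin{equation*}
Q(\alpha,\beta)=\langle (G\wedge G)\alpha,\beta\rangle \quad \text{for all } \alpha,\beta\in\wedge^2\R^n.
\end{equation*}

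First I introduce the linear involution $\Psi(R):=(G\wedge G)R$ on $\operatorname{End}(\wedge^2\R^n)$. The fundamental identity rewrites $Q(R\alpha,\beta)=\langle \Psi(R)\alpha,\beta\rangle$, whence $R$ is $Q$-symmetric iff $\Psi(R)$ is symmetric in the ordinary sense, and termwise substitution converts the $Q$-Bianchi identity for $R$ into the standard Bianchi identity for $\Psi(R)$. Hence $\Psi$ restricts to a linear isomorphism $\Sym^2_{Q,b}(\wedge^2\R^n)\xrightarrow{\sim}\Sym^2_b(\wedge^2\R^n)$. A direct computation using $Q(X\wedge Y,Z\wedge W)=\g(X,Z)\g(Y,W)-\g(X,W)\g(Y,Z)$ verifies that $\id\in\Sym^2_{Q,b}(\wedge^2\R^n)$, and correspondingly $G\wedge G=\Psi(\id)\in\Sym^2_b(\wedge^2\R^n)$. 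Thus the affine map $\Phi(R):=(G\wedge G)(R-k\,\id)=\Psi(R)-k(G\wedge G)$ is a well-defined affine isomorphism between the two Bianchi subspaces.

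Applying the fundamental identity to $(R-k\,\id)\alpha$ gives $Q((R-k\,\id)\alpha,\alpha)=\langle\Phi(R)\alpha,\alpha\rangle$ for every $\alpha\in\wedge^2\R^n$; restricting to decomposable $\alpha=X\wedge Y$ yields $\Phi(\mathfrak{R}_{\sec\geq k}(n,\nu))=\mathfrak{R}_{\sec\geq 0}(n)$. Since being a spectrahedron, being a spectrahedral shadow, and being neither are all preserved and reflected by affine isomorphisms of ambient real vector spaces, Theorem~A' follows immediately from Theorem~\ref{mainthm:spectrahedra}. The only delicate step is the transport of the Bianchi identity, which is what links the a priori distinct subspaces $\Sym^2_{Q,b}$ and $\Sym^2_b$; this step dissolves into routine bookkeeping once the fundamental identity is invoked to rewrite each term.
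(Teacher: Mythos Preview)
Your proof is correct and follows essentially the same approach as the paper: both reduce to Theorem~\ref{mainthm:spectrahedra} via the linear isomorphism $\psi_Q(R)=(G\wedge G)R$ from $\Sym^2_{Q,b}(\wedge^2\R^n)$ to $\Sym^2_b(\wedge^2\R^n)$. The only cosmetic difference is that you compose with the translation by $-k(G\wedge G)$ to land in $\mathfrak R_{\sec\geq 0}(n)$, whereas the paper asserts $\psi_Q$ sends $\mathfrak R_{\sec\geq k}(n,\nu)$ directly to $\mathfrak R_{\sec\geq k}(n)$; your bookkeeping is in fact the more accurate of the two, since $\psi_Q(\id)=G\wedge G\neq\id$ when $0<\nu<n$.
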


\begin{proof}
Consider the linear isomorphism $\psi_Q\colon \Sym^2_Q(\wedge^2\R^n)\to\Sym^2(\wedge^2\R^n)$, given by $\psi_Q(R)=(G\wedge G)\cdot R$. Note that it restricts to a linear isomorphism
\begin{equation*}
\psi_Q\colon \Sym^2_{Q,b}(\wedge^2\R^n)\longrightarrow\Sym^2_b(\wedge^2\R^n) \cong \R[\Gr(n)]_2,
\end{equation*}
and $\mathfrak R_{\sec\geq k}(n,\nu)=\psi_Q^{-1}\big(\mathfrak R_{\sec\geq k}(n)\big)$, so all conclusions follow from Theorem~\ref{mainthm:spectrahedra}.
\end{proof}

Moreover, preimages of the inner and outer approximations in Theorem~\ref{mainthm:approx} by $\psi_Q$ give analogous approximations of $\mathfrak R_{\sec\geq k}(n,\nu)$. However, these are no longer $\O(n)$-invariant, since $\psi_Q$ is not $\O(n)$-equivariant unless $\nu=0$ or $\nu=n$. Finally, Theorem~\ref{mainthm:4dim} also carries over to $\mathfrak R_{\sec\geq k}(4,\nu)$, precomposing \eqref{eq:pk} with $\psi_Q$.

\section{Irreducibility of the discriminant of symmetric matrices}\label{appendix}

In this Appendix, we study irreducibility of discriminants of symmetric matrices. Although the techniques are standard, 
we give complete proofs for the convenience of the reader, as the following does not seem to be easily available in the literature:

\begin{proposition}\label{prop:discirred}
The discriminant of symmetric matrices $\disc\colon\Sym^2(\C^n)\to\C$ is irreducible over $\C$ for all $n\geq3$.
\end{proposition}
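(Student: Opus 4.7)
The plan is to show separately that (a) the zero set $D:=\{A\in\Sym^2(\C^n):\disc(A)=0\}$ is an irreducible variety, and (b) that $\disc$ is squarefree as a polynomial. Together these imply that $\disc$ equals a nonzero constant times the irreducible defining polynomial of $D$, and hence is itself irreducible.

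\textbf{Irreducibility of $D$.} Being defined by a single polynomial equation, $D$ is a hypersurface of pure dimension $\binom{n+1}{2}-1$. I stratify $D$ by Jordan type, using the classical fact that two complex symmetric matrices are similar if and only if they are conjugate by an element of $O(n,\C)$ acting via $g\cdot A=gAg^{\mathrm t}$. A dimension count (orbit dimension plus number of free spectral parameters) shows that the unique Jordan stratum of top dimension $\binom{n+1}{2}-1$ is the locus $S$ of matrices with Jordan type $J_2(\lambda)\oplus\diag(\mu_3,\dots,\mu_n)$ and $\lambda,\mu_3,\dots,\mu_n\in\C$ pairwise distinct; all other Jordan strata, including the diagonalizable double-eigenvalue stratum and the $J_3$ strata, have strictly smaller dimension. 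Consequently every irreducible component of $D$ must meet $S$ in a dense open subset, so irreducibility of $D$ reduces to irreducibility of $S$.

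To prove $S$ irreducible, consider the parametrization
\[
\Phi\colon SO(n,\C)\times U\longrightarrow\Sym^2(\C^n),\qquad (g,\lambda,\mu_3,\dots,\mu_n)\longmapsto g\,J(\lambda,\mu)\,g^{\mathrm t},
\]
where $U\subset\C^{n-1}$ is the open set of distinct tuples and $J(\lambda,\mu)$ is a fixed symmetric representative of the Jordan class, for instance the block diagonal matrix with $2\times 2$ block $\bigl(\begin{smallmatrix}\lambda+1&\sqrt{-1}\\ \sqrt{-1}&\lambda-1\end{smallmatrix}\bigr)$ and diagonal entries $\mu_3,\dots,\mu_n$. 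The domain is irreducible since $SO(n,\C)$ is connected and irreducible for $n\geq 2$. The key point where $n\geq 3$ enters is that the image of $\Phi$ is all of $S$: the reflection $g_0=\diag(1,1,-1,1,\dots,1)\in O(n,\C)\setminus SO(n,\C)$ lies in the stabilizer of $J(\lambda,\mu)$, so $O(n,\C)\cdot J(\lambda,\mu)=SO(n,\C)\cdot J(\lambda,\mu)$ and $\Phi$ surjects onto $S$. Therefore $S$, being the image of an irreducible variety, is irreducible, and hence so is $D$.

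\textbf{Squarefreeness of $\disc$.} Irreducibility of $D$ yields an irreducible polynomial $P$ with $D=\{P=0\}$ and $\disc=c\,P^{k}$ for some $c\in\C^{\times}$ and integer $k\geq 1$. To force $k=1$, I would restrict $\disc$ to the line $A_0+tE\subset\Sym^2(\C^n)$ with $A_0=\diag(1,2,\dots,n)$ and $E$ the symmetric matrix whose $(1,2)$ and $(2,1)$ entries are $1$ and all others zero. The eigenvalues of $A_0+tE$ are $\tfrac{1}{2}(3\pm\sqrt{1+4t^{2}})$ together with $3,4,\dots,n$, and a direct computation of $\prod_{i<j}(\lambda_i-\lambda_j)^{2}$ yields
\[
\disc(A_0+tE)=\mathrm{const}\cdot(1+4t^{2})\prod_{\ell=3}^{n}\bigl((2\ell-3)^{2}-1-4t^{2}\bigr)^{2}.
\]
At $t=\tfrac{1}{2}\sqrt{-1}$ the factor $1+4t^{2}$ vanishes simply while the remaining factors are nonzero, so $\disc$ restricted to this line has a simple zero, which is incompatible with $k\geq 2$. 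Hence $k=1$ and $\disc$ is irreducible.

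\textbf{Main obstacle.} The delicate step is the irreducibility of the top Jordan stratum $S$, i.e., that for $n\geq 3$ the $SO(n,\C)$-orbit of $J(\lambda,\mu)$ already realizes the full $O(n,\C)$-orbit, thanks to a reflection in a simple-eigenvalue direction. This is precisely what fails for $n=2$: the stabilizer of $J(\lambda)$ in $O(2,\C)$ is $\{\pm I\}\subset SO(2,\C)$, so $D$ splits into two components, in agreement with the factorization $(a-c)^{2}+4b^{2}=(a-c+2\sqrt{-1}\,b)(a-c-2\sqrt{-1}\,b)$.
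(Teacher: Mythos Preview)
Your proof is correct and follows essentially the same route as the paper's: first show the zero locus $D$ is irreducible by parametrizing its top Jordan stratum via $SO(n,\C)\times(\text{eigenvalue parameters})$, using a reflection in a simple-eigenvalue coordinate to pass from $O(n,\C)$-orbits to $SO(n,\C)$-orbits (this is exactly the paper's Lemma~\ref{lem:so}); then show squarefreeness by restricting to a one-parameter family with a simple root. The only cosmetic differences are that the paper parametrizes over all of $\C^{n-1}$ rather than the open set $U$ of distinct tuples, and that it verifies squarefreeness on a different curve (one whose $2\times2$ block is symmetric nilpotent at the special parameter value), but the logic is identical.
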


\begin{remark}
The polynomial $\disc\colon\Sym^2(\C^n)\to\C$ is a constant if $n=1$, and is \emph{not irreducible} if $n=2$ since it is a product of two complex conjugate linear forms.
\end{remark}

Consider the conjugation action on $\Sym^2(\C^n)$ of the (complex) Lie groups:
\begin{equation*}
\begin{aligned}
\O(n,\C)&=\big\{S\in\GL(n,\C):\, S^\mathrm t S=\id\big\},\\
\SO(n,\C)&=\big\{S\in\O(n,\C):\, \det(S)=1\big\}.
\end{aligned}
\end{equation*}
Recall that $\dim_\C \O(n,\C)=\dim_\C \SO(n,\C)=\binom{n}{2}$, and that $\SO(n,\C)$ is an irreducible affine variety. 

\begin{lemma}\label{lem:so}
Let $A\in\Sym^2(\C^n)$ be a symmetric matrix whose $i${\rm th} row has only zero entries, except possibly for its $i${\rm th} entry. Then the $\O(n,\C)$-orbit of $A$ coincides with its $\SO(n,\C)$-orbit.
\end{lemma}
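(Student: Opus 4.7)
My plan is to exploit the coset decomposition $\O(n,\C) = \SO(n,\C) \sqcup \SO(n,\C) \cdot T_0$, where $T_0$ is any fixed element of $\O(n,\C)$ of determinant $-1$. The inclusion $\SO(n,\C)\cdot A \subseteq \O(n,\C)\cdot A$ is immediate, so what we need to prove is the reverse inclusion. Concretely, it suffices to produce, for the given matrix $A$, one element $T \in \O(n,\C)$ with $\det T = -1$ that stabilizes $A$ under conjugation, i.e.\ $T A T^{\mathrm t} = A$. Indeed, once such a $T$ is found, for any $S \in \O(n,\C)$ with $\det S = -1$ we set $S' := S T$, which lies in $\SO(n,\C)$, and then $S' A (S')^{\mathrm t} = S T A T^{\mathrm t} S^{\mathrm t} = S A S^{\mathrm t}$.

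The natural candidate is the Householder-type reflection through the hyperplane $e_i^\perp$, namely
\begin{equation*}
T = \id - 2\, e_i e_i^{\mathrm t}.
\end{equation*}
One checks directly that $T = T^{\mathrm t} = T^{-1}$ and $\det T = -1$, so $T \in \O(n,\C)$. The hypothesis on $A$ says exactly that $A e_i = A_{ii}\, e_i$ and (by symmetry) $e_i^{\mathrm t} A = A_{ii}\, e_i^{\mathrm t}$, i.e.\ $e_i$ spans an invariant line of $A$. Since $T$ acts as $-1$ on this line and as $+1$ on its orthogonal complement (with respect to the standard bilinear form, which is the one relevant here because $T$ lies in the complex orthogonal group), a short expansion using $e_i^{\mathrm t} e_i = 1$ yields $T A T = A$.

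This finishes the argument, and the only potentially delicate point to double-check is the last computation, which I expect to be routine: substituting and using $A e_i = A_{ii} e_i$ gives
\begin{equation*}
T A T \;=\; A \;-\; 2 A_{ii}\, e_i e_i^{\mathrm t} \;-\; 2 A_{ii}\, e_i e_i^{\mathrm t} \;+\; 4 A_{ii}\, e_i (e_i^{\mathrm t} e_i) e_i^{\mathrm t} \;=\; A.
\end{equation*}
No further ingredients are needed; the proof is essentially a one-line reduction once the reflection $T$ is identified.
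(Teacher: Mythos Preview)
Your proof is correct and is essentially the same as the paper's: the reflection $T=\id-2\,e_i e_i^{\mathrm t}$ is exactly the diagonal matrix $\diag(1,\dots,1,-1,1,\dots,1)$ with $-1$ in the $i$th slot, which the paper uses as an element of the isotropy of $A$ lying in $\O(n,\C)\setminus\SO(n,\C)$. You simply spell out the verification $TAT=A$ and the coset argument in more detail than the paper does.
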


\begin{proof}
Let $S\in\O(n,\C)\setminus\SO(n,\C)$ be the reflection on the $i${th} coordinate, i.e., the diagonal matrix with entries $1$ on the diagonal, except for a $-1$ at the $i${th} position. Clearly, $S$ is in the isotropy of $A$ and $\O(n,\C)=\SO(n,\C)\cup (\SO(n,\C)\cdot S)$.
\end{proof}

In the following, we describe the $\O(n,\C)$-orbit of a symmetric matrix $A\in\Sym^2(\C^n)$ by expressing it in terms of a convenient canonical form.
To this end, following the notation of \cite{pencils}, let $\Delta_k,\Lambda_k\in\Sym^2(\C^k)$ be the matrices given by
\begin{equation*}
\Delta_k=\begin{pmatrix}
&&&&1\\&&&1&\\&&\iddots&&\\&1&&&\\1&&&&
\end{pmatrix},
\qquad
\Lambda_k=\begin{pmatrix} &&&&0\\&&&0&1\\&&\iddots&1&\\&\iddots&\iddots&&\\0&1&&&
\end{pmatrix},
\end{equation*}
and fix $R_k\in\GL(k,\C)$ such that $R_k \Delta_k R_k^\mathrm t=\id_k$. Note that $\Lambda_1=0$. Furthermore, given $\lambda\in\C$, define $M_k(\lambda)\in\Sym^2(\C^k)$ by
\begin{equation*}
M_k(\lambda)=\lambda \id_k+R_k \Lambda_k R_k^\mathrm t.
\end{equation*}

\begin{lemma}\label{lem:orbits}
Given $A\in\Sym^2(\C^n)$, there exist $\lambda_1,\dots,\lambda_\ell\in\C$ eigenvalues of $A$, and $k_1,\dots,k_\ell\in\mathds N$, so that the $\O(n,\C)$-orbit of $A$ contains the block diagonal matrix
\begin{equation}\label{eq:canform}
\begin{pmatrix}
M_{k_1}(\lambda_1) && \\&\ddots & \\ && M_{k_\ell}(\lambda_\ell) \end{pmatrix}.
\end{equation}
Moreover, the characteristic polynomial of $A$ is $\det(A-t\id)=\prod_{i=1}^\ell (\lambda_i-t)^{k_i}$.
\end{lemma}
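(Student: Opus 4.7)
The plan is to decompose $\C^n$ into generalized eigenspaces of $A$ that are mutually orthogonal with respect to the standard bilinear form $\langle\cdot,\cdot\rangle$, reduce the problem to the nilpotent case on each summand, and then invoke the canonical form theory of symmetric pencils to bring each block into the shape $M_{k_i}(\lambda_i)$.

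First, write $\C^n=V_1\oplus\cdots\oplus V_m$, where $V_i=\ker(A-\mu_i\id)^n$ are the generalized eigenspaces of $A$ associated to the distinct eigenvalues $\mu_1,\dots,\mu_m$. Using $A^\mathrm{t}=A$, I claim the $V_i$ are pairwise orthogonal with respect to $\langle\cdot,\cdot\rangle$: for $i\neq j$ and $v\in V_i$, $w\in V_j$, since $A-\mu_i\id$ is invertible on $V_j$, we may write $w=(A-\mu_i\id)^n u$ with $u\in V_j$, and then
\begin{equation*}
\langle v,w\rangle=\langle v,(A-\mu_i\id)^n u\rangle=\langle (A-\mu_i\id)^n v,u\rangle=0.
\end{equation*}
Because $\langle\cdot,\cdot\rangle$ is non-degenerate on $\C^n$ and the $V_i$ are mutually orthogonal, its restriction to each $V_i$ is non-degenerate. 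Choosing an orthogonal basis of each $V_i$ (adapted to $\langle\cdot,\cdot\rangle|_{V_i}$) and assembling them yields an element of $\O(n,\C)$ that conjugates $A$ into a block-diagonal matrix with symmetric blocks $A_i\colon V_i\to V_i$, where $A_i=\mu_i\id+N_i$ and $N_i$ is nilpotent.

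The key step is now to put each symmetric nilpotent $N_i$ on the non-degenerate symmetric space $(V_i,\langle\cdot,\cdot\rangle|_{V_i})$ into its classical canonical form. By the Weierstrass--Kronecker theory of symmetric pencils (as presented in the cited reference~\cite{pencils}), there is a basis of $V_i$ in which the bilinear form becomes a block-diagonal anti-diagonal identity $\Delta_{k_{i,1}}\oplus\cdots\oplus\Delta_{k_{i,r_i}}$ (this expresses the Jordan structure of $N_i$ compatibly with $\langle\cdot,\cdot\rangle|_{V_i}$, and gives the Jordan block sizes $k_{i,j}$), while $N_i$ becomes the matching block-diagonal $\Lambda_{k_{i,1}}\oplus\cdots\oplus\Lambda_{k_{i,r_i}}$. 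Applying within each such sub-block the change of basis $R_{k_{i,j}}$, which by definition satisfies $R_{k_{i,j}}\Delta_{k_{i,j}}R_{k_{i,j}}^\mathrm{t}=\id_{k_{i,j}}$, converts the form to the standard inner product (so the overall transformation lies in $\O(n,\C)$) and carries $N_i$ to the direct sum of $R_{k_{i,j}}\Lambda_{k_{i,j}}R_{k_{i,j}}^\mathrm{t}$. Relabeling the pairs $(\mu_i,k_{i,j})$ as $(\lambda_1,k_1),\dots,(\lambda_\ell,k_\ell)$, the $\O(n,\C)$-orbit of $A$ contains the block-diagonal matrix \eqref{eq:canform}.

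Finally, since $\Lambda_k$ is strictly triangular with respect to the anti-diagonal ordering, $R_k\Lambda_k R_k^\mathrm{t}$ is nilpotent. Hence $\det(M_k(\lambda)-t\id_k)=\det((\lambda-t)\id_k+R_k\Lambda_k R_k^\mathrm{t})=(\lambda-t)^k$. As the characteristic polynomial is invariant under conjugation and multiplies across block-diagonal decompositions, $\det(A-t\id)=\prod_{i=1}^\ell(\lambda_i-t)^{k_i}$, concluding the proof.

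The main obstacle is the middle step, namely the reduction of the symmetric nilpotent operator $N_i$ on $(V_i,\langle\cdot,\cdot\rangle|_{V_i})$ to the canonical form $\bigoplus_j R_{k_{i,j}}\Lambda_{k_{i,j}}R_{k_{i,j}}^\mathrm{t}$ by an element of $\O(V_i,\langle\cdot,\cdot\rangle|_{V_i})$. The orthogonal decomposition into generalized eigenspaces is straightforward, and the characteristic polynomial computation is immediate once the canonical form is in hand; what requires real input from the theory of symmetric pencils is finding, within each generalized eigenspace, a basis that simultaneously diagonalizes the bilinear form and exhibits the Jordan structure of the nilpotent part in the prescribed anti-diagonal shape.
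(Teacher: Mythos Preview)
Your approach is correct and rests on the same external input as the paper (the canonical form for symmetric pencils from \cite{pencils}), but the organization is genuinely different. You first decompose $\C^n$ into generalized eigenspaces by hand, prove they are $\langle\cdot,\cdot\rangle$-orthogonal, reduce to a symmetric nilpotent operator on each summand, and then invoke \cite{pencils} blockwise. The paper instead applies \cite{pencils} \emph{once} to the full pencil $\rho\,\id+A$: this produces $S\in\GL(n,\C)$ with $S(\rho\,\id+A)S^{\mathrm t}$ block-diagonal with blocks $(\rho+\lambda_i)\Delta_{k_i}+\Lambda_{k_i}$ for \emph{all} $\rho$, and then reads off from the coefficient of $\rho$ that $(RS)(RS)^{\mathrm t}=\id$, i.e., orthogonality comes for free. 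The paper's route is shorter and avoids the separate eigenspace-orthogonality step; your route makes the Jordan structure more transparent and confines the pencil theory to the nilpotent case.

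One small point in your final paragraph: the claim that $R_k\Lambda_k R_k^{\mathrm t}$ is nilpotent because ``$\Lambda_k$ is strictly triangular with respect to the anti-diagonal ordering'' is not quite a proof, since $A\mapsto R_kAR_k^{\mathrm t}$ is a congruence, not a similarity (and indeed $\Lambda_k$ itself is \emph{not} nilpotent for $k\geq 2$). The fix is immediate: from $R_k\Delta_kR_k^{\mathrm t}=\id$ and $\Delta_k^2=\id$ one gets $R_k^{\mathrm t}=\Delta_kR_k^{-1}$, hence $R_k\Lambda_kR_k^{\mathrm t}=R_k(\Lambda_k\Delta_k)R_k^{-1}$ is similar to the shift matrix $\Lambda_k\Delta_k$, which is nilpotent. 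The paper avoids this by computing $\det(M_k(\lambda)-t\id)$ directly as a determinant.
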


\begin{proof}
We make use of the fact that one can bring pencils of symmetric matrices over $\C$ to a certain standard form. As proved e.g.~in \cite[Sec.~5]{pencils}, there 
exist $\lambda_1,\dots,\lambda_\ell\in\C$ eigenvalues of $A$, $k_1,\dots,k_\ell\in\mathds N$,
and $S\in\GL(n,\C)$ with the following property: for all $\rho\in\C$, the matrix $S(\rho\id_n+A) S^\mathrm t$ is block diagonal with $\ell$ blocks of the form $(\rho+\lambda_i) \Delta_{k_i}+\Lambda_{k_i}$.

Letting $R\in\GL(n,\C)$ be the block diagonal matrix with blocks $R_{k_1},\ldots,R_{k_\ell}$, we have that $RS(\rho\id_n+A) S^\mathrm tR^\mathrm t$ is a block diagonal matrix with blocks $M_{k_i}(\rho+\lambda_i)$.
Since this holds for all $\rho\in\C$, it follows that $RS\in\O(n,\C)$ and that 
$RS\, A\, (RS)^\mathrm t$ has the desired form \eqref{eq:canform}. 

The characteristic polynomial of $A$ is equal to that of 
\eqref{eq:canform}, which is the product of the characteristic polynomials of its blocks $M_{k_i}(\lambda_i)$. These can be computed as:
\begin{equation*}
\begin{aligned}
\det(M_k(\lambda)-t\id_k)&=\det(R_k(\Lambda_k+(\lambda-t)\Delta_k)R_k^\mathrm t)\\
&=\det(R_k)^2\det(\Lambda_k+(\lambda-t)\Delta_k)\\
&=(\lambda-t)^k,
\end{aligned}
\end{equation*}
because $\det(R_k)^2=(-1)^{\lfloor n/2\rfloor}$ and $\det(\Lambda_k+(\lambda-t)\Delta_k)=(-1)^{\lfloor n/2\rfloor}(\lambda-t)^k$.
\end{proof}

We are now in the position to prove the main result of this Appendix:

\begin{proof}[Proof of Proposition~\ref{prop:discirred}]
The zero set $V$ of $\disc\colon\Sym^2(\C^n)\to\C$ is a hypersurface in $\Sym^2(\C^n)$, and hence an equidimensional variety of (pure) codimension $1$. 
Furthermore, since $\disc\colon\Sym^2(\C^n)\to\C$ is $\O(n,\C)$-invariant, by Lemma~\ref{lem:orbits} we have that $V$ is the union of the $\O(n,\C)$-orbits of block diagonal matrices with blocks $M_{k_1}(\lambda_1),\dots,M_{k_\ell}(\lambda_\ell)$ for all $\lambda_i\in\C$, and $k_1+\ldots+k_\ell=n$ with at least one $k_i\geq2$, that is, $\ell\leq n-1$.
For fixed $k_1,\ldots,k_\ell$, the set of such block matrices is parametrized by $\C^\ell$, and hence the union of the $\O(n,\C)$-orbits of such matrices has dimension $\leq \ell+\dim_\C \O(n,\C)$. If $\ell<n-1$, then $\ell+\dim_\C \O(n,\C)<\dim_\C V$.
Thus, since $V$ is equidimensional, it is the closure of the union of the $\O(n,\C)$-orbits of all matrices
\begin{equation*}
M(\lambda)=\begin{pmatrix} M_2(\lambda_1)&&&\\&\lambda_2&&\\&&\ddots&\\
&&&\lambda_{n-1}\end{pmatrix}
\end{equation*}
with $\lambda=(\lambda_1,\ldots,\lambda_{n-1})\in\C^{n-1}$. Since $n\geq3$, it suffices to take $\SO(n,\C)$-orbits by Lemma \ref{lem:so}. In other words, $V$ is the closure of the image of the map
\begin{equation*}
\SO(n,\C)\times\C^{n-1}\longrightarrow\Sym^2(\C^n), \quad (S,\lambda)\mapsto S\, M(\lambda)\, S^\mathrm t.
\end{equation*}
As the source is irreducible, the (closure of the) image is irreducible as well, which shows that $V$ is irreducible. Therefore, $\disc=\phi^m$ for some irreducible polynomial $\phi\colon\Sym^2(\C^n)\to\C$ and $m\in\mathds N$, so it remains to show that $m=1$. This can be seen, e.g., considering the restriction of $\disc\colon\Sym^2(\C^n)\to\C$ to the curve $$\begin{pmatrix}1+x&\sqrt{-1}&&&\\ \sqrt{-1}&-1-x&&&\\&&1&&\\&&&\ddots&\\&&&&n-2\end{pmatrix}\in \Sym^2(\C^n),$$ which is a univariate polynomial in $x$ with a \emph{simple} root at $0$, hence $m=1$.
\end{proof}

\def\cprime{$'$}

\end{document}